\documentclass[a4paper, 11pt]{article}
\usepackage[utf8]{inputenc}
\usepackage[T1]{fontenc}
\usepackage[english]{babel}
\usepackage{fancyhdr}
\usepackage{lipsum}

\usepackage{pgf,tikz-cd,pgfplots}
\pgfplotsset{compat=newest}
\usetikzlibrary{arrows}
\usetikzlibrary{tikzmark}
\usepackage{multicol}
\usepackage{placeins}
\usepackage{delimset}
\usepackage[useregional]{datetime2} 
\usepackage{enumitem} 
\usepackage{lmodern}
\usepackage{amsmath}
\usepackage{amsfonts}
\usepackage{amssymb} 
\usepackage{leftindex}
\usepackage{graphicx} 
\usepackage{threeparttable}
\usepackage{float} 
\usepackage{caption}
\usepackage{geometry} 
\usepackage{tcolorbox}
\usepackage{empheq}
\usepackage{mathrsfs}
\usepackage{newenviron} 
\usepackage{amsthm} 
\usepackage{listings}
\usepackage{hyperref} 
\usepackage{upgreek} 
\pagecolor{white}
\makeindex 
\usepackage{appendix}
\usepackage{booktabs}

\renewcommand{\leq}{\leqslant}
\renewcommand{\geq}{\geqslant}

\newcommand{\R}{\mathbb{R}}
\newcommand{\N}{\mathbb{N}}
\newcommand{\Q}{\mathbb{Q}}

\newcommand{\C}{\mathbb{C}}

\newcommand{\normm}[1]{\left\lVert#1\right\rVert} 

\newcommand{\scalarprod}[1]{\left\langle#1\right\rangle} 
\newcommand{\supp}[1]{\text{supp}\left(#1\right)}

\title{Recovery of a Measure-valued Source in the Heat Equation from Sparse Boundary Measurements}
\author{ Ulysse Dalmasso\thanks{
    Univ Rouen Normandie, CNRS, Normandie Univ, LMRS UMR 6085, F-76000 Rouen, France.\\
    Email: yavar.kian@univ-rouen.fr, ulysse.dalmasso@univ-rouen.fr,  siyu.cen@univ-rouen.fr
}\hspace*{1cm}
    Siyu Cen\footnotemark[1]\hspace*{1cm} Yavar Kian\footnotemark[1]
}
\date{}

\begin{document}

\DTMsetdatestyle{ddmmyyyy} 
\DTMsetup{datesep=/} 

\let\oldproofname=\proofname
\renewcommand{\proofname}{\rm\bf{\oldproofname}}

\theoremstyle{plain}
\newtheorem{thm}{Theorem}[section]
\newtheorem{prop}[thm]{Proposition}
\newtheorem{lem}[thm]{Lemma}
\newtheorem{cor}[thm]{Corollary}
\newtheorem{prpr}[thm]{Property}
\newtheorem{prprs}[thm]{Properties}
\newtheorem*{remark2}{Remark}
\newtheorem{example}{Example}

\theoremstyle{definition}
\newtheorem{df}[thm]{Definition}
\newtheorem*{remark}{Remark}
\newtheorem*{nota}{Notation}
\newtheorem*{notas}{Notations}

\theoremstyle{remark}
\newtheorem*{rem}{Remark}

\newcounter{envcount}
\NewEnviron{env}[2]{\refstepcounter{envcount}\text{#1 \textbf{\theenvcount}. #2}\hfill

\fbox{\begin{minipage}[t]{0.9\linewidth}
\BODY
\end{minipage}}}

\NewEnviron{env3}[2]{\refstepcounter{envcount}\text{#1 \textbf{\theenvcount}. #2}\hfill

\begin{minipage}[t]{0.9\linewidth}
\BODY
\end{minipage}}
\numberwithin{envcount}{section}

\NewEnviron{env2}[1]{#1

\fbox{\begin{minipage}[t]{0.9\linewidth}
\BODY
\end{minipage}}}

\NewEnviron{envalign}{
{\centering
$\displaystyle
\begin{aligned}
\BODY
\end{aligned}
$
\par}}

\newcounter{changebar}
\newcommand{\changebarcolor}{black}
\newcommand{\changestart}[1][black]{%
    \renewcommand{\changebarcolor}{#1}%
    \stepcounter{changebar}%
    \tikzmarknode{chbar-\thechangebar-start}{\strut}%
}
\newcommand{\changeend}{%
    \tikzmarknode{chbar-\thechangebar-end}{\strut}%
    \begin{tikzpicture}[remember picture, overlay]
        \draw[very thick, \changebarcolor] ([xshift={\oddsidemargin+1in-10pt}]current page.west |- chbar-\thechangebar-start.north) -- ([xshift={\oddsidemargin+1in-10pt}]current page.west |- chbar-\thechangebar-end.south);
    \end{tikzpicture}%
}

\setcounter{page}{1}
\counterwithin{equation}{section}

\maketitle
\begin{abstract}
    This article is devoted to the inverse source problem of uniquely determining a measure-valued source from sparse boundary measurements. The measurements considered consist of flux observations over a time interval at two distinct points on the boundary of the domain. The main objective of this work is to extend the existing literature on inverse source problems from sparse boundary measurements, which has so far been limited to point sources or $L^2$ sources, to the identification of a general class of Radon measures. Our approach combines several analytical tools, including regularity properties, boundary representations, and the time analyticity of solutions to the diffusion equation with singular sources. Our theoretical analysis is complemented by a numerical study of the problem. In particular, we investigate the reconstruction of point sources and of a source supported on a curve, and present numerical experiments illustrating the recovery of such sources from sparse boundary flux measurements.\\
    \textbf{Keywords :} Inverse source problem,  measure-valued source,  sparse boundary measurements, uniqueness, heat equation.\newline 
{\bf Mathematics subject classification 2020 :} 35R30, 35K20, 65M32. 
\end{abstract}

\section{Introduction}

Let $\Omega = \mathbb{D}$ be the unit disc of $\R^2$, $T>0$, $Q := (0,T)\times\Omega$ the spacetime domain, $\Sigma := (0,T)\times\partial\Omega$ its lateral boundary. Consider a diffusion process on $\Omega$ described by the following initial boundary value problem 
\begin{align}\label{eqmu}
    \left\lbrace
    \begin{array}{rcll}
        \partial_tu - \Delta u &=& F(t,x),\quad &\text{in } Q,\\
        u(t,x) &=& 0,\quad &\text{on }\Sigma,\\
        u(0,x) &=& 0,\quad &\text{on }\Omega.
    \end{array}
    \right.
\end{align}
In the present article, we assume that the source term $F$ is a general measure-valued source with separated variables, namely,
$F(t,x)=\sigma(t)\mu(x)$, $t\in(0,T)$ and $x\in\Omega$,
where $\mu\in\left(\mathcal{C}(\overline{\Omega})\right)'$ is a finite signed Radon measure, that is, a regular Borel measure (see, e.g., \cite[Theorem 6.19]{rud} for more details). Under this assumption, we investigate the following inverse source problem :
\begin{itemize}
\item[{\bf(ISP)}] {\em Determine uniquely the measure-valued source $\mu$ from flux measurement, on two boundary points $x_1,x_2\in\partial\Omega$ over the time interval $(0,T)$, associated with the diffusion process $u$ solving \eqref{eqmu}.}
\end{itemize}
Recall that problem {\bf(ISP)} is motivated by a variety of applications involving diffusion processes generated by measure-valued sources. These include environmental applications, where numerous models have been developed to study the effects of pollution sources supported on lower-dimensional subsets of the domain. For instance, roadway air pollution may interfere with the assessment of the impact of a point source on air quality \cite{Nanni2022puff_lagrangianmodel_coastal_site}. In this context, Gaussian plume models for line sources have been developed \cite{Briant2011improved_line_source_air_pollution}. Diffusive underground pollution sources have also been investigated in large urban areas \cite{Chatterjee2020groundwater_cities_finite_sources}, as well as line contaminant sources in groundwater conductivity fields \cite{Zheng2024line_contaminant_sources_groundwater}. In these settings, problem {\bf(ISP)} provides a mathematical framework for quantifying, predicting, and preventing various pollution phenomena.

Motivated by its broad range of applications, problems similar to {\bf(ISP)} have attracted considerable attention from the mathematical community (see, e.g., \cite{Is1} for an overview). Most of the existing literature is devoted to the determination of sources from either internal or boundary measurements, assuming that the solution is known on an open subset of the domain $\Omega$ or of its boundary $\partial\Omega$. Without aiming to be exhaustive, we mention the works \cite{Choulli2006stability_estimates,IY}, which establish stable determination of $L^2$ sources by means of Carleman estimates. For the particular class of cylindrical domains, \cite{CJKZ,JKZ1} investigated both the theoretical and numerical reconstruction of such sources using contraction arguments and derived explicit error estimates for the proposed reconstruction schemes (see also \cite{JKZ} for results with final time observations). Concerning uniqueness, a general analysis of this problem, together with its limitations illustrated by explicit counterexamples, can be found in \cite{KLY,KSXY}. Motivated by applications to pollution source detection, the works \cite{Andrle2012moving_pollution_sources,ElBadiaHaDuong2002pollution_detection_problem} established uniqueness results for the recovery of point sources using techniques based primarily on unique continuation principles for parabolic equations. More recently, these uniqueness results have been strengthened by the stability estimates obtained in \cite{HJKT}
and a non-iterative method  to recover locations of the point sources by \cite{SuWa}.

Despite their generality, the above-mentioned results are costly in terms of the required measurements and are sometimes difficult to implement in practical applications. As an alternative, several authors have investigated inverse source problems from sparse measurements, where the observations are limited to a finite number of points, similarly to the formulation of problem {\bf(ISP)}. One of the earliest works in this direction is \cite{HR}, where {\bf(ISP)} was considered with $\sigma$ constant and $\mu$ given by the characteristic function of a domain. The analysis of \cite{HR} was subsequently extended in \cite{Run2020Heatequation_source_problem}, where {\bf(ISP)} was studied with $\sigma$ being a piecewise constant function to be identified, together with  a more general source term $\mu\in H^s(\Omega)$, $s\in(0,1/2)$. The approach developed in \cite{Run2020Heatequation_source_problem} was further extended in \cite{Li2020fractionalorder_sourceterm} to a broader class of time-fractional diffusion equations. More recently, {\bf(ISP)} with a single point source (i.e., $\mu=\delta_y$, where $\delta_y$ denotes the Dirac delta measure at $y\in\Omega$) was investigated in \cite{GZZ} under the assumption that $\sigma\equiv1$. The results of \cite{GZZ} were significantly extended in \cite{gong2026identification}, where the authors considered {\bf(ISP)} with a single point source while simultaneously recovering a general time-dependent coefficient $\sigma$. In addition, \cite{gong2026identification} also analyzed {\bf(ISP)} with a single point source in higher spatial dimensions when $\sigma$ is piecewise constant. We also mention the works of \cite{JET,MGH,WKT}, which addressed a similar class of inverse source problems for hyperbolic equations. To the best of our knowledge, despite its strong physical motivation, the resolution of {\bf(ISP)} when $\mu$ belongs to the general class of finite signed Radon measures remains an open problem. Moreover, even the recovery of a source term $\mu\in H^s(\Omega)$, $s\in(0,1/2)$, in the case where $\sigma$ is not piecewise constant does not appear to have been addressed in the existing literature. These two extensions of the current theory for {\bf(ISP)} constitute the main objectives of the present paper.
 

Let us give a more precise formulation of problem {\bf(ISP)}.
To this end, we recall that $u$ denotes the solution in the transposition sense of \eqref{eqmu} (see Definition \ref{d1}) and we assume that
$\supp\mu\subset\subset\Omega$, where, for any set $\omega$, the notation
$\omega\subset\subset\Omega$
means that $\overline{\omega}\subset\Omega$, and that
$\sigma\in L^2(0,T)$.
Propositions~\ref{IBVPsolution} and~\ref{p1} show that
$u\in L^2(Q)$
and that there exists
$r\in(0,1)$
such that $u|_{(0,T)\times\overline{C_r}}
\in L^2\bigl(0,T;\mathcal C^1(\overline{C_r})\bigr)$,
where
$C_r:=\left\{x\in\R^2\,;\, r<|x|_2<1\right\}$
and $|\cdot|_2$ denotes the Euclidean norm in $\R^2$.
We also denote by $\nu$ the outward unit normal vector to $\partial\Omega$.
We can now reformulate problem {\bf(ISP)} as follows:
\begin{itemize}
\item[{\bf(ISP')}] {\em
Fix $x_1,x_2\in\partial\Omega$ and assume that $\sigma$ is known.
Determine uniquely the measure-valued source $\mu$ from the knowledge of
$\partial_\nu u(t,x_\ell)$, a.e. 
$t\in(0,T)$,
$\ell=1,2$.
}
\end{itemize}

Our main result provides the following positive answer to problem {\bf(ISP')}.

\begin{thm}\label{inv_prob}
    Set $\{x_\ell = e^{i\theta_\ell}\}_{\ell=1,2}\subset\partial\Omega$ satisfying the assumption $\theta_1 - \theta_2\notin\pi\Q$ and, for $j=1,2$, let $\mu_j$ be finite signed Radon measures such that $\supp{\mu_j}\subset\subset\Omega$.
    Let $\sigma\in H^1(0,T)$ be a non-uniformly vanishing function such that  $\sigma $ is constant on $(T_1,T)$, for some $T_1\in(0,T)$, and $\sigma(0) = 0$. We denote by $u_j$ the solution in the transposition sense of \eqref{eqmu} with $\mu = \mu_j$. Then the condition
    \begin{align}\label{Hypothesis_normalderivative}
        \partial_{\nu}u_1(t,x_\ell) = \partial_{\nu}u_2(t,x_\ell)\,,\;  \textrm{a.e. }t\in(0,T),\;  \ell= 1,2
    \end{align}
    implies that $\mu_1 = \mu_2$.
  
\end{thm}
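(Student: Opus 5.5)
By linearity it suffices to show that $\mu:=\mu_1-\mu_2$, whose support is compactly contained in $\Omega$, vanishes whenever the solution $u$ of \eqref{eqmu} with source $\sigma(t)\mu(x)$ satisfies $\partial_\nu u(t,x_\ell)=0$ for a.e. $t\in(0,T)$, $\ell=1,2$. The plan has four steps: remove $\sigma$ via Duhamel, use time analyticity to extend the data to $(0,\infty)$, apply the Laplace transform to get a family of elliptic identities, and test these against explicit harmonic and Bessel functions on the disc.

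\textbf{Removing $\sigma$.} Let $v$ solve the homogeneous heat equation with Dirichlet condition and initial datum $\mu$ in the transposition sense. Formally $v(t)=\sum_n e^{-\lambda_n t}\langle \mu,\varphi_n\rangle\varphi_n$, where $(\lambda_n,\varphi_n)$ are the Dirichlet eigenpairs of the disc. Duhamel's formula gives $u(t)=\int_0^t\sigma(t-s)v(s)\,ds$. Since $\sigma(0)=0$ and $\sigma\in H^1$, this becomes $u(t)=\int_0^t\sigma'(t-s)w(s)\,ds$ with $w(s)=\int_0^s v$. This regularizes enough that the normal derivative makes sense near the boundary, using Proposition~\ref{p1} and the fact that $\supp\mu$ stays away from $C_r$. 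We then get the Volterra identity
\[
\partial_\nu u(t,x_\ell)=\int_0^t\sigma(t-s)\,\partial_\nu v(s,x_\ell)\,ds .
\]
Since $\sigma\not\equiv0$, Titchmarsh's convolution theorem gives $\partial_\nu v(s,x_\ell)=0$ on $(0,T-t_0)$, where $t_0=\inf\supp\sigma$. This requires $T-t_0>0$. Because $\sigma$ is constant on $(T_1,T)$, the nonzero constant case gives $t_0<T$ directly; otherwise $\sigma$ is not identically zero before $T_1$, so again $t_0<T$.

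\textbf{Analytic continuation and Laplace transform.} The map $s\mapsto\partial_\nu v(s,x_\ell)=\sum_n e^{-\lambda_n s}\langle\mu,\varphi_n\rangle\partial_\nu\varphi_n(x_\ell)$ is real analytic for $s>0$. To justify this, I would bound $|\langle\mu,\varphi_n\rangle|\le\|\mu\|\,\|\varphi_n\|_\infty$ and $|\partial_\nu\varphi_n(x_\ell)|$ polynomially in $\lambda_n$, for instance through local elliptic estimates in $C_r$. Hence it vanishes on all of $(0,\infty)$. Taking the Laplace transform in $s$ for $\Re p>0$, the function $V_p(x)=\int_0^\infty e^{-ps}v(s,x)\,ds$ solves $(p-\Delta)V_p=\mu$ in $\Omega$ with $V_p=0$ on $\partial\Omega$ and $\partial_\nu V_p(x_\ell)=0$. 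Equivalently, $\sum_n\frac{\langle\mu,\varphi_n\rangle}{p+\lambda_n}\partial_\nu\varphi_n(x_\ell)=0$ for all $p$. Grouping the terms by distinct eigenvalue and using uniqueness of such Dirichlet-series / partial-fraction expansions, we obtain
\[
\sum_{\lambda_n=\lambda}\langle\mu,\varphi_n\rangle\,\partial_\nu\varphi_n(x_\ell)=0\quad\text{for every eigenvalue }\lambda,\ \ell=1,2 .
\]

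\textbf{Eigenfunction expansion on the disc.} The Dirichlet eigenfunctions of the disc are $J_k(j_{k,m}r)\cos k\theta$ and $J_k(j_{k,m}r)\sin k\theta$. By Siegel's theorem (Bourget's hypothesis), distinct $k$ give distinct zeros $j_{k,m}$, so each eigenspace is spanned by one pair with fixed $(k,m)$. Since $J_k'(j_{k,m})\neq0$, the previous identity gives, with $a=\langle\mu,J_k(j_{k,m}r)\cos k\theta\rangle$ and $b=\langle\mu,J_k(j_{k,m}r)\sin k\theta\rangle$,
\[
a\cos k\theta_\ell+b\sin k\theta_\ell=0,\qquad \ell=1,2 .
\]
The determinant of this system is $\sin k(\theta_2-\theta_1)$, which is nonzero for $k\ge1$ because $\theta_1-\theta_2\notin\pi\Q$. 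Hence $a=b=0$, and for $k=0$ the first equation alone gives $a=0$. Thus $\mu$ annihilates every eigenfunction.

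\textbf{Conclusion and main obstacle.} The eigenfunctions are total in $C_0(\overline\Omega)$ restricted to a neighbourhood of $\supp\mu$. Density can be obtained by noting that $\langle\mu,\varphi\rangle=0$ for all $\varphi$ forces $V_p$ (and hence $v$) to vanish identically, so $\mu=(p-\Delta)V_p=0$. I expect the main obstacle to be the rigorous justification of the transposition-sense regularity: the convergence and analyticity of the series for $\partial_\nu v(s,x_\ell)$, the Duhamel/Titchmarsh step when $\sigma$ is only $H^1$, and the partial-fraction uniqueness. To handle these I would first work with the boundary representation of Proposition~\ref{p1}, which is valid because $\mu$ is smooth (zero) in $C_r$, combined with the exponential decay $e^{-\lambda_n s}$.
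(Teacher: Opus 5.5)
Your argument is correct in substance. It removes $\sigma$ in a genuinely different way from the paper; from the fundamental identity \eqref{t1c} onward (Bourget--Siegel, simple zeros of $J_k$, the determinant $\sin k(\theta_1-\theta_2)$, the null-measure conclusion) the two proofs coincide.

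The paper never passes to the homogeneous problem. It writes $u=\sigma G+H$ (Theorem~\ref{Fourierseriesofu}). There, integration by parts in time using $\sigma(0)=0$ and $\sigma\in H^1$ gives $H$ an extra factor $\lambda_n^{-1}$, so every series is controlled by global spectral norms. It then uses the constancy of $\sigma$ on $(T_1,T)$ to make the flux analytic for $t>T_1$ and extend the data to $(0,\infty)$. A Laplace transform yields \eqref{t1b}; dividing by $\hat\sigma$ where it does not vanish and taking residues at $p=-\lambda_n$ gives \eqref{t1c}.

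You instead write $\partial_\nu u(\cdot,x_\ell)=\sigma*g$ with $g=\partial_\nu v(\cdot,x_\ell)$ and strip $\sigma$ by Titchmarsh's theorem. This buys real generality. Titchmarsh needs only $\sigma\in L^1(0,T)$ with $\sigma\not\equiv0$, and your proof that $t_0<T$ works for every nonzero $\sigma$. So once the Volterra identity is in place, you use neither the constancy of $\sigma$ on $(T_1,T)$, nor $\sigma(0)=0$, nor $\sigma\in H^1$. These are exactly the hypotheses whose removal the paper leaves open.

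The price is that you need $g\in L^1(0,T)$ all the way down to $s=0$. The termwise spectral bounds you propose for analyticity blow up as $s\to0$ and do not give this. It has to come from local regularity of $v$ near $[0,T]\times\partial\Omega$: either the cutoff bootstrap of Proposition~\ref{p1} applied to $v$, whose initial datum vanishes near $\partial\Omega$, or Gaussian bounds for the Dirichlet heat kernel. Either one shows that $g$ is continuous on $[0,\infty)$ with $g(0)=0$. The paper's route avoids Titchmarsh and any small-time analysis, at the cost of the stronger assumptions on $\sigma$.

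One step needs repair. The series $\sum_n\langle\mu,\varphi_n\rangle\partial_\nu\varphi_n(x_\ell)/(p+\lambda_n)$ need not converge absolutely for a measure. For $\mu=\delta_y$ its terms are of size about $\lambda_n^{-1/2}|\varphi_n(y)|$, while $\lambda_n$ grows only linearly in $n$. So your ``equivalently'' is not justified as written. The paper avoids this by subtracting $\partial_\nu G(x_\ell)$, which gains the factor $\lambda_n^{-1}$ in \eqref{t1b}.

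In your route the Laplace transform is simply unnecessary. The Dirichlet series $g(s)=\sum_\Lambda c_\Lambda e^{-\Lambda s}$ converges absolutely for $s>0$ and vanishes on $(0,\infty)$. Multiplying by $e^{\Lambda_1 s}$ and letting $s\to\infty$ gives $c_{\Lambda_1}=0$, and induction then yields \eqref{t1c} directly.
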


To the best of our knowledge, Theorem~\ref{inv_prob} provides the first positive answer to problem {\bf(ISP')} for a general  finite signed Radon measure $\mu$ with boundary observation points
$\{x_\ell = e^{i\theta_\ell}\}_{\ell=1,2}\subset\partial\Omega$
only subjected to the generic condition
$\theta_1-\theta_2\notin\pi\Q$,
already considered in
\cite{GZZ,Run2020Heatequation_source_problem}.
As mentioned above, beyond its intrinsic mathematical interest, this extension of the existing literature on problem {\bf(ISP')} to the general class of Radon measures is motivated by various applications that cannot be modelled either by point sources or by $L^2$ sources.
One of the main challenges in this extension stems from the low regularity of the source measure $\mu$.
We overcome this difficulty by deriving the boundary representation \eqref{bound-rep} of the flux $\partial_\nu u$, involving the solution of the elliptic problem \eqref{fo3} together with a Fourier series expansion based on suitably chosen eigenfunctions of the Laplacian.
Our analysis combines this representation formula with regularity and analyticity properties of problem \eqref{eqmu}, as well as techniques from complex and harmonic analysis.

In contrast to
\cite{GZZ,Run2020Heatequation_source_problem},
where the analysis was restricted to piecewise constant time-dependent source terms $\sigma$, we consider here a general function
$\sigma\in H^1(0,T)$
subject only to the conditions
$\sigma$ constant on $(T_1,T]$ and $\sigma(0)=0$.
This regularity assumption was previously relaxed only in
\cite{gong2026identification},
where the authors also addressed the simultaneous recovery of $\sigma$.
However, their approach relies on a reduction to an elliptic inverse source problem that applies only to the case of a single point source. Moreover, due to the lack of regularity for more general classes of signed Radon measures, it remains unclear whether either of the conditions $\sigma\in H^1(0,T)$ or $\sigma(0)=0$ can be removed.
Even in the case
$\mu_j\in L^2(\Omega)$, $j=1,2$,
Theorem~\ref{inv_prob} appears to be new.
Moreover, when
$\mu_j\in L^2(\Omega)$,
the assumption
$\sigma(0)=0$
can be removed, while for
$\mu_j\in H^s(\Omega)$, $s\in(0,1/2)$
our analysis can be carried out under the weaker assumption
$\sigma\in L^2(0,T)$. We refer Corollary \ref{c1} in the appendix, for the precise statement of our answer to problem {\bf(ISP')} with such class of more regular source $\mu$.

The assumption that $\sigma$ is constant on $(T_1,T]$ is an important technical requirement. It has been imposed in all the works on problem {\bf(ISP')} known to us (see, e.g.,
\cite{gong2026identification,GZZ,Run2020Heatequation_source_problem}),
and it is not clear whether it can be relaxed.
A similar remark applies to the restriction of the domain to the unit disc.
Indeed, as in \cite{gong2026identification,GZZ,Run2020Heatequation_source_problem},
our analysis relies on explicit properties of the eigenfunctions of the Laplacian.

In Section \ref{sec:num} we present numerical results to illustrate the feasibility of recovering point sources with known constant amplitudes and a line source, which is supported on a curve, from two point flux boundary measurements.

This article is organized as follows. In Section~\ref{sec:analysis}, we present preliminary results on problem~\eqref{eqmu}, including a representation formula for the solution and the time analyticity of the boundary flux. The proof of the main result is given in Section~\ref{sectionproof}. In Section~\ref{sec:num}, we present a numerical analysis of {\bf(ISP')}, investigating the reconstruction of point sources and of a source supported on a curve, and providing numerical experiments illustrating the recovery of such sources from sparse boundary flux measurements. Finally, in the Appendix, we  recall some useful properties of finite signed Radon measures stated in Lemma~\ref{nullmeasure}, and state and prove Corollary~\ref{c1}, where problem {\bf(ISP')} is considered for $\mu\in H^s(\Omega)$ with $s\in[0,1/2)$. 

\section{Analysis of the direct problem}\label{sec:analysis}

 In this section we collect some preliminary useful properties of solutions of \eqref{eqmu} that we consider in the transposition sense. We start by proving unique existence of such solutions. Then we consider increasing regularity properties away from the support of the source.

\subsection{Solutions in the transposition sense}

 Recall that, by the Sobolev embedding theorem we have $H^2(\Omega)\hookrightarrow\mathcal{C}(\overline{\Omega})$. Consequently, finite signed Radon measures are continuous linear forms on $H^2(\Omega)$ and integration with respect to such measures is defined as a consequence of the Radon-Nikodym theorem (see \cite[Remark 6.18, pp. 129]{rud}). Following \cite[Chapter~3~Section~2]{lionsmagenesvol1}, we can define solutions in the transposition sense of \eqref{eqmu} as follows.\bigskip

\begin{df}\label{d1}
We say that $u\in L^2(Q)$ is a solution in the sense of transposition to \eqref{eqmu}  if for every $w\in L^2(0,T,H^2(\Omega)\cap H_0^1(\Omega))\cap H^1(0,T, L^2(\Omega))$, satisfying $w(T,x) = 0$ for a.e. $x\in\Omega$, we have
\begin{align}
    \int_0^T\int_{\Omega} u(t,x)\left(-\partial_tw(t,x) - \Delta w(t,x)\right) dxdt = \int_0^T\int_{\Omega}\sigma(t) w(t,x)d\mu(x)dt.
\end{align}
\end{df}
Using this definition, we obtain the following result.
\begin{prop}\label{IBVPsolution}
The initial boundary value problem \eqref{eqmu} has a unique solution $u\in L^2(Q_T)$ in the transposition sense. Moreover, there exists a constant $C>0$ depending only on $T$ and $\Omega$ such that
$\norm{u}_{L^2(Q)} \leq C\abs{\mu}(\Omega)\norm{\sigma}_{L^2(0,T)}$,
where $|\mu|$ denotes the total variation of $\mu$.
\end{prop}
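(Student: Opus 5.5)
The plan is to follow the classical Lions--Magenes duality (transposition) method. For $g\in L^2(Q)$, consider the backward adjoint problem
\begin{align*}
-\partial_t w - \Delta w = g \ \text{in } Q,\qquad w=0 \ \text{on } \Sigma,\qquad w(T,\cdot)=0 \ \text{in } \Omega .
\end{align*}
By standard parabolic regularity (change $t\mapsto T-t$ and use the eigenfunction expansion in the Dirichlet eigenbasis $(\lambda_k,\varphi_k)$ of $-\Delta$ on $\Omega$), this problem has a unique solution
\begin{align*}
w_g \in W := L^2(0,T;H^2(\Omega)\cap H^1_0(\Omega))\cap H^1(0,T;L^2(\Omega)),
\end{align*}
with $\normm{w_g}_W\leq C\normm{g}_{L^2(Q)}$. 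Moreover, $L:g\mapsto w_g$ is a linear isomorphism from $L^2(Q)$ onto the subspace $W_0=\{w\in W;\ w(T,\cdot)=0\}$, whose inverse is $w\mapsto -\partial_tw-\Delta w$.

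For existence, define the linear functional
\begin{align*}
\ell(g):=\int_0^T\int_\Omega \sigma(t)\,w_g(t,x)\,d\mu(x)\,dt .
\end{align*}
The key estimate is
\begin{align*}
|\ell(g)|\leq \abs{\mu}(\Omega)\int_0^T|\sigma(t)|\,\normm{w_g(t)}_{\mathcal C(\overline\Omega)}dt
\leq C\abs{\mu}(\Omega)\normm{\sigma}_{L^2(0,T)}\normm{w_g}_{L^2(0,T;H^2(\Omega))},
\end{align*}
which uses the embedding $H^2(\Omega)\hookrightarrow\mathcal C(\overline\Omega)$ for a.e.\ $t$ together with Cauchy--Schwarz in time. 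A small measurability point arises here: the map $t\mapsto\int_\Omega w_g(t,x)\,d\mu(x)$ is measurable, because $w_g\in L^2(0,T;\mathcal C(\overline\Omega))$ is a strongly measurable Bochner function and $\mu$ is a continuous linear form on $\mathcal C(\overline\Omega)$. Combining the estimate with the parabolic bound gives $|\ell(g)|\leq C\abs{\mu}(\Omega)\normm{\sigma}_{L^2(0,T)}\normm{g}_{L^2(Q)}$. By the Riesz representation theorem in $L^2(Q)$, there is a unique $u\in L^2(Q)$ with $\int_Q ug\,dx\,dt=\ell(g)$ for all $g$, and $\normm{u}_{L^2(Q)}=\normm{\ell}\leq C\abs{\mu}(\Omega)\normm{\sigma}_{L^2(0,T)}$.

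Next, I check that this $u$ satisfies Definition~\ref{d1}. Given any admissible $w\in W_0$, set $g:=-\partial_tw-\Delta w\in L^2(Q)$. Uniqueness for the backward problem gives $w=w_g$, so the identity of Definition~\ref{d1} is exactly $\int_Q ug=\ell(g)$.

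For uniqueness, if $u_1,u_2$ are two transposition solutions, their difference $v$ satisfies $\int_Q v(-\partial_tw-\Delta w)=0$ for all $w\in W_0$. Since $L$ is onto $W_0$, the set of such $-\partial_tw-\Delta w$ is all of $L^2(Q)$, so $v=0$.

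The only step requiring real care is the maximal $L^2$ parabolic regularity for the adjoint problem and the identification of the range $\{-\partial_tw-\Delta w;\ w\in W_0\}=L^2(Q)$. I would verify this through the spectral representation
\begin{align*}
w_g(t)=\sum_k\Bigl(\int_t^T e^{-\lambda_k(s-t)}g_k(s)\,ds\Bigr)\varphi_k ,
\end{align*}
where $g_k$ are the Fourier coefficients of $g$. The convolution bound $\normm{\lambda_k\int_t^Te^{-\lambda_k(s-t)}g_k(s)\,ds}_{L^2(0,T)}\leq\normm{g_k}_{L^2(0,T)}$ (Young's inequality) yields $\Delta w_g,\ \partial_t w_g\in L^2(Q)$. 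Elliptic regularity on the smooth disc then gives $\normm{v}_{H^2}\simeq\normm{\Delta v}_{L^2}$ on $H^2\cap H^1_0$.
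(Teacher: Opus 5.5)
Your proposal is correct and follows essentially the same route as the paper: solve the backward adjoint problem with maximal $L^2$ regularity, bound the functional $g\mapsto\int_0^T\sigma(t)\int_\Omega w_g(t,x)\,d\mu(x)\,dt$ using $H^2(\Omega)\hookrightarrow\mathcal C(\overline\Omega)$ and Cauchy--Schwarz in time, and conclude with the Riesz representation theorem in $L^2(Q)$. The paper cites Lions--Magenes for the adjoint problem and leaves implicit the check of Definition~\ref{d1} and uniqueness; your proposal supplies these, namely a spectral proof of the adjoint regularity, the verification via uniqueness of the backward problem, and uniqueness via surjectivity of $w\mapsto-\partial_tw-\Delta w$ from $W_0$ onto $L^2(Q)$.
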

\begin{proof}
By \cite[Chapter~4,~Theorem~1.1]{lionsmagenesvol2} for any $F\in L^2(Q)$ there exists a unique $w_F\in L^2(0,T,H^2(\Omega)\cap H_0^1(\Omega))\cap H^1(0,T,L^2(\Omega))$ solving the following problem
\begin{align}\label{problemF}
    \left\lbrace
    \begin{array}{l}
        -\partial_tw_F(t,x) - \Delta w_F(t,x) = F(t,x)\quad\text{in } Q_T,\\
        w_F = 0\quad\text{on }\Sigma_T,\\
        w(T,\cdot) = 0\quad \text{on }\Omega
    \end{array}
    \right.
\end{align}
and satisfying, for a constant $C>0$ depending only on $T$ and $\Omega$, the estimate
\begin{align}\label{estimate_wF}
    \norm{w_F}_{L^2(0,T,H^2(\Omega))} + \norm{w_F}_{H^1(0,T,L^2(\Omega))}\leq C\norm{F}_{L^2(Q)}.
\end{align}
We define the map
\begin{align*}
    \zeta : F\in L^2(Q)\mapsto\int_0^T\int_{\Omega}\sigma(t) w_F(t,x)d\mu(x)dt.
\end{align*}
Applying the embedding $H^2(\Omega)\hookrightarrow \mathcal{C}(\overline{\Omega})$, the estimate \eqref{estimate_wF} and the fact that $\mu$ is a finite signed Radon measure, we find
\begin{align*}
    \abs{\zeta(F)} \leq \int_0^T\abs{\sigma(t)}\;\norm{w_F(T-t,\cdot)}_{\mathcal C(\overline{\Omega})}\abs{\mu}(\Omega)dt
    &\leq \abs{\mu}(\Omega)\norm{\sigma}_{L^1(0,T)}\norm{w_F}_{L^1(0,T,\mathcal{C}(\overline{\Omega}))}\\
    &\leq C\abs{\mu}(\Omega)\norm{\sigma}_{L^2(0,T)}\norm{w_F}_{L^2(0,T,H^2(\Omega))}\\
    &\leq C\abs{\mu}(\Omega)\norm{\sigma}_{L^2(0,T)}\norm{F}_{L^2(Q)},
\end{align*}

\noindent 
where $C>0$ is a constant depending only on $\Omega$ and $T$ that might change from line to line. Thus $\zeta$ is a continuous linear form on $L^2(Q)$ with norm $\norm{\zeta}\leq C\abs{\mu}(\Omega)\norm{\sigma}_{L^2(0,T)}$. By the Riesz representation theorem, there exists a unique $u\in L^2(Q)$ such that
\begin{align*}
    \scalarprod{u,F}_{L^2(Q)} = \zeta(F) = \int_Q\sigma(t)w_F(T-t,x)d\mu(x)dt,\quad \forall F\in L^2(Q),
\end{align*}

\noindent and $\norm{u}_{L^2(Q)}\leq C\abs{\mu}(\Omega)\norm{\sigma}_{L^2(0,T)}$. Then $u$ is the unique solution in the transposition sense to \eqref{eqmu}.
\end{proof}

\subsection{Regularity of the solution near the boundary}

 \bigskip

\begin{prop}\label{p1}
There exists $r\in(0,1)$, such that  the solution $u$ in the transposition sense of \eqref{eqmu}, with $\supp\mu\subset\subset\Omega$, satisfies $u|_{(0,T)\times \overline{C_r}}\in L^2(0,T,\mathcal C^1(\overline{C_r}))$ .
\end{prop}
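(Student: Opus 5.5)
We would prove Proposition~\ref{p1} by interior/boundary parabolic regularity away from the support of $\mu$, via a cut-off argument. Since $\supp\mu\subset\subset\Omega$ and $\Omega=\mathbb D$, we fix $0<r_0<r<1$ with $\supp\mu\subset\{|x|_2<r_0\}$. We also choose $\chi\in\mathcal C^\infty(\overline{\Omega})$ with $\chi=1$ on $\{|x|_2\geq r\}$ and $\chi=0$ on $\{|x|_2\leq r_0'\}$, where $r_0<r_0'<r$. Set $v:=\chi u$. The idea is that $v$ solves a heat equation with zero boundary and initial data whose source no longer involves $\mu$, since $\chi\mu=0$. Formally,
\begin{align*}
\partial_t v-\Delta v=-2\nabla\chi\cdot\nabla u-(\Delta\chi)u=:G,
\end{align*}
and $G$ is supported in the annulus $A:=\{r_0'\leq|x|_2\leq r\}$, where $u$ is expected to be smooth in $x$.

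We would make this rigorous in the transposition framework of Definition~\ref{d1}. Given a test function $w$, we would insert $\chi w$ into the definition of the transposition solution. This is admissible because $\chi w$ has the required regularity and vanishes on $\Sigma$ and at $t=T$. Since $\chi=0$ on $\supp\mu$, the right-hand side vanishes. Expanding $-\partial_t(\chi w)-\Delta(\chi w)$ then shows that $v$ is a transposition solution with source $\widetilde G:=2\operatorname{div}(u\nabla\chi)-(\Delta\chi)u$, which lies in $L^2(0,T;H^{-1}(\Omega))$ because $u\in L^2(Q)$ by Proposition~\ref{IBVPsolution}. By uniqueness for the weak $L^2(H^{-1})$ theory (Lions--Magenes), $v\in L^2(0,T;H^1_0(\Omega))\cap H^1(0,T;H^{-1}(\Omega))$.

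We would then bootstrap with a nested family of cut-offs $\chi_k$, each equal to $1$ near the boundary and supported where $\chi_{k-1}=1$. At each step the source $[\Delta,\chi_k]u$ involves one derivative of $u$ on a region where the previous step has already gained regularity. Maximal regularity for the heat equation with Dirichlet condition on the smooth domain (\cite[Chapter~4]{lionsmagenesvol2}) gives one extra spatial derivative per step: first $\chi_1u\in L^2(0,T;H^2)$, and then, iterating with $H^{m}$-valued sources and using the compatibility conditions, which are trivial because the data vanish at $t=0$ and on $\Sigma$, $\chi_ku\in L^2(0,T;H^{k+1}(\Omega))$. Stopping at $k=2$ gives $u\in L^2(0,T;H^3(C_r))$. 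The Sobolev embedding $H^3(C_r)\hookrightarrow\mathcal C^1(\overline{C_r})$ in dimension two then yields the claim. Even $H^{2+\varepsilon}$ would suffice, but integer steps are simpler.

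The main obstacle we expect is the first step, namely justifying that $\chi u$ is a genuine weak solution starting only from $u\in L^2(Q)$ and the transposition identity. The test functions in Definition~\ref{d1} are strong ($H^2$ in space), so the identity gives $v$ as a transposition solution with an $L^2(H^{-1})$ source. We would identify it with the variational solution by density: solve the forward problem with source $\widetilde G$ in $L^2(H^1_0)\cap H^1(H^{-1})$, check that this solution also satisfies the transposition identity, and conclude equality through the uniqueness argument of Proposition~\ref{IBVPsolution}. That uniqueness relies on the surjectivity of $w\mapsto-\partial_tw-\Delta w$ onto $L^2(Q)$. The subsequent higher-regularity steps are standard.
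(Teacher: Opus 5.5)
Your proposal is correct and follows essentially the same route as the paper: a cut-off $\chi$ vanishing near $\supp{\mu}$ makes $\chi u$ a solution with an $L^2(0,T;H^{-1}(\Omega))$ source, hence in $L^2(0,T;H^1_0(\Omega))$, and nested cut-offs with Lions--Magenes maximal regularity then bootstrap this to $L^2(0,T;H^3)$ near $\partial\Omega$, followed by the embedding $H^3\hookrightarrow\mathcal C^1$. Your explicit identification of the transposition solution with the variational one, via surjectivity of $w\mapsto-\partial_tw-\Delta w$, is a welcome detail that the paper leaves to a citation. There are two harmless slips: the divergence-form source should read $\widetilde G=-2\,\mathrm{div}(u\nabla\chi)+(\Delta\chi)u$, and the final radius must be larger than your initial $r$ because the nested cut-offs shrink the region, which is fine since the statement only asks for some $r$.
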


\begin{proof}

Since $\supp\mu\subset\subset\Omega$ there exists $0<r_0<1$ such that $\supp\mu\subset\mathcal{B}_{r_0}$ where $\mathcal{B}_{r_0} := \left\lbrace x\in\R^2\; ;\; \abs{x}_2< r_0\right\rbrace$. Let $V$ be  an open neighbourhood of the boundary $\partial\Omega = \mathbb{S}^1$ with $\overline{V}\cap\overline{\mathcal{B}_{r_0}} = \emptyset$ and $\chi\in\mathcal{C}^{\infty}(\R^2)$ such that $\chi = 1$ on neighbourhood of $\overline{V}$ and $\chi=0$ on neighbourhood of $\overline{\mathcal{B}_{r_0}}$. We fix $v\in L^2(Q)$ defined by $v(t,x) = \chi(x) u(t,x)$, $(t,x)\in Q$, with $u\in L^2(Q)$ the solution in the transposition sense of \eqref{eqmu}.  Fix also  $\mathcal H_T:=\{w\in H^1(0,T;L^2(\Omega))\cap L^2(0,T;H^2(\Omega)\cap H^1_0(\Omega)):\ w(T,\cdot)\equiv0\}$. Observing that $\chi = 0$ on a neighbourhood of $\supp\mu$ and  $\nabla u\in L^2(0,T,H^{-1}(\Omega))$, for all $w\in\mathcal H_T$, we obtain 
\begin{align*}
    \int_Qv(-\partial_tw-\Delta w)dxdt &= \int_0^T\int_\Omega u(-\partial_t(\chi(x)w)-\Delta(\chi(x)w)+2\nabla\chi\cdot\nabla w+(\Delta\chi)w)(t,x)dxdt\\
    &=\int_0^T\int_{\Omega}\chi(x)\sigma(t)w(t,x)d\mu(x)dt-2\int_0^T\left\langle \nabla\chi\cdot\nabla u(t,\cdot),w(t,\cdot)\right\rangle_{H^{-1}(\Omega),H^1_0(\Omega)}dt\\
    &\ \ \ - \int_0^T\int_{\Omega}(\Delta\chi)uwdxdt\\
    &= \left\langle(-2\nabla\chi\cdot\nabla u - (\Delta\chi)u),w\right\rangle_{L^2(0,T;H^{-1}(\Omega)),L^2(0,T;H^1_0(\Omega))}. 
\end{align*}
Therefore, we have  $\partial_tv-\Delta v\in L^2(0,T,H^{-1}(\Omega))$ and applying \cite[Chapter~3~Example~4.7.1]{lionsmagenesvol1}, we obtain $\chi u=v \in L^2(0,T,H_0^1(\Omega))\cap \mathcal{C}([0,T],L^2(\Omega))$.\bigskip

Similarly, we fix $U$ a neighbourhood  of $\partial\Omega$ such that $U\subset V\subset\supp\chi$, $\chi_2\in C^\infty(\overline{\Omega})$ such that  $\chi_2 = 1$ on $U$ and $\chi_2=0$ on a neighbourhood of $\overline{\Omega}\setminus V$, and  we set $v_2\in L^2(Q)$ defined by $v_2(t,x) = \chi_2(x) v(t,x)$, $(t,x)\in Q$.  Since $\chi=1$ on $\supp{\chi_2}$, we have $\chi_2\chi = \chi_2$ so $v_2 = \chi_2 v = \chi_2\chi u = \chi_2 u$. Then, similarly as above, for all $w\in \mathcal H_T$, we obtain
\begin{align*}
  \int_Qv_2(-\partial_tw-\Delta w)dxdt &= \left\langle\underbrace{\chi_2(-2\nabla\chi\cdot\nabla u - (\Delta\chi)u) }_{=0}-2\nabla\chi_2\cdot\nabla v - (\Delta\chi_2)v,w\right\rangle_{L^2(0,T;H^{-1}(\Omega)),L^2(0,T;H^1_0(\Omega))}\\
    &= \left\langle(-2\nabla\chi_2\cdot\nabla v - (\Delta\chi_2)v),w\right\rangle_{L^2(0,T;H^{-1}(\Omega)),L^2(0,T;H^1_0(\Omega))}.
\end{align*}

Thus, we have $\partial_tv_2-\Delta v_2\in L^2(Q)$ and, from \cite[Chapter~4~Theorem~1.1]{lionsmagenesvol2}, we obtain $v_2 = \chi_2 u\in L^2(0,T,H^2(\Omega)\cap H_0^1(\Omega))\cap H^1(0,T,L^2(\Omega))$. Combining this with the fact that $v_2 = u$ on $(0,T)\times U$, we get $u|_{(0,T)\times U}\in L^2(0,T,H^2(U))$. Repeating this process combined with \cite[Proposition 2.3, Chapter 4]{lionsmagenesvol2}, \cite[Theorem 5.3, Chapter 4]{lionsmagenesvol2}, we deduce that there exists  $U_1$ a neighbourhood  of $\partial\Omega$, with smooth boundary, such that $u|_{(0,T)\times U_1}\in L^2(0,T,H^3(U_1))$ and by the Sobolev embedding theorem we get $H^3(U_1)\hookrightarrow\mathcal{C}^1(\overline{U_1})$ which implies that 
$u|_{(0,T)\times \overline{U_1}}\in L^2(0,T,\mathcal{C}^1(\overline{U_1}))$.
In addition, we can find $r\in(r_0,1)$ such that $C_r\subset U$ and $u|_{(0,T)\times \overline{C_r}}\in L^2(0,T,\mathcal{C}^1(\overline{C_r}))$.
\end{proof}

\subsection{Decomposition in Fourier series of the solution}
From now on,  we denote by $\left\langle\cdot,\cdot \right\rangle$ the inner product on $L^2(\Omega)$ defined by
$$\left\langle f,g \right\rangle=\int_\Omega f\overline{g}dx,\quad f,g\in L^2(\Omega).$$
 We recall that the operator $A = -\Delta$ acting on $L^2(\Omega)$ with domain $D(A) = H^2(\Omega)\cap H_0^1(\Omega)$  admits an eigen-system $\{(\lambda_n,\phi_n)\}_{n\geq 1}$, with eigenvalues ordered in increasing order $0<\lambda_1\leq \lambda_2\leq\ldots\:$, $\lambda_n\rightarrow +\infty$ as $n\rightarrow \infty$ and eigenfunctions $\phi_n\in\mathcal{C}^{\infty}(\overline{\Omega})\cap H_0^1(\Omega)$ that form an orthonormal basis of $L^2(\Omega)$. Let us recall the definition of   the space $X^s=D(A^{\frac{s}{2}})$ for $s\in\R$.

\begin{df}\label{frac_order_sobolev_spaces}
    For $s\geq 0$, the Hilbert space $X^s$ is defined by\newline
    $X^s := \left\lbrace u = \sum\limits_{n=1}^{+\infty}\scalarprod{u,\phi_n}\phi_n\in L^2(\Omega)\,\bigg|\, \sum\limits_{n = 1}^{+\infty}\lambda_n^s\abs{\scalarprod{u,\phi_n}}^2 < +\infty\right\rbrace$, with the norm $\norm{u}_{s} := \left(\sum\limits_{n = 1}^{+\infty}\lambda_n^s\abs{\scalarprod{u,\phi_n}}^2\right)^{\frac12}$, for $u\in X^s$, and $X^{-s}$ is the dual space of $X^s$ with the norm $\norm{u}_{-s} := \left(\sum\limits_{n = 1}^{+\infty}\lambda_n^{-s}\abs{\scalarprod{u,\phi_n}_{-\frac{s}{2},\frac{s}{2}}}^2\right)^{\frac12}$, for $u\in X^{-s}$, where $\scalarprod{u,\phi}_{-\frac{s}{2},\frac{s}{2}} := \scalarprod{u,\phi}_{X^{-s}, X^s}$, for $\phi\in X^s$.
\end{df}
The series of continuous embeddings $X^{1+\delta}\subset H^{1+\delta}(\Omega)\subset\mathcal{C}(\overline{\Omega})$, for all $\delta>0$, implies that
\begin{equation}\label{mureg}\mu\in\bigcap_{\delta>0} X^{-1-\delta}.\end{equation}
    From now on, for $\delta>0$, $\psi\in X^{1+\delta}$, $\scalarprod{\mu,\psi}_{-\frac{1+\delta}{2},\frac{1+\delta}{2}}$ will be simply denoted by $\scalarprod{\mu,\psi}$.
    
 Using these properties, we prove the following important decomposition in Fourier series of the solution $u\in L^2(Q)$ in the transposition sense of \eqref{eqmu}.

\begin{thm}\label{Fourierseriesofu}
   The solution $u\in L^2(Q)$ in the transposition sense of \eqref{eqmu} is expressed as the following Fourier series :
    \begin{align}
        u(t,x) &\label{fo1}= \sigma(t)\sum_{n = 1}^{+\infty}\frac{\scalarprod{\mu,\phi_n}}{\lambda_n}\phi_n - \sum_{n=1}^{+\infty}\left(\int_0^te^{-\lambda_n(t-s)}\sigma'(s)\frac{\scalarprod{\mu,\phi_n}}{\lambda_n}ds\right)\phi_n\\
        &\label{fo2}= \sigma(t)G(x) + H(t,x),\quad (t,x)\in Q.
    \end{align}
    Moreover, the series $H$ converges in $L^2(0,T,H^{3-\delta}(\Omega)\cap H_0^1(\Omega))$ and $G\in H^{1-\delta}(\Omega)$   solves in the transposition sense the following boundary value problem :
    \begin{align}\label{fo3}
        \left\lbrace
        \begin{array}{ll}
            -\Delta G = \mu\quad\text{in }\Omega\\
            G\mid_{\partial\Omega} = 0
        \end{array}
        \right.
    \end{align}
Finally, there exists $r\in(0,1)$, such that $G|_{C_r}\in H^3(C_r)\hookrightarrow\mathcal{C}^1(\overline{C_r})$.
\end{thm}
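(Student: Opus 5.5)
We first derive the spectral representation of $u$ by testing the transposition identity of Definition~\ref{d1} against separated test functions. Given $\eta\in H^1(0,T)$ with $\eta(T)=0$ and $n\geq1$, we take $w(t,x)=\eta(t)\phi_n(x)$, which belongs to the admissible class. Writing $u_n(t)=\scalarprod{u(t,\cdot),\phi_n}$, the identity becomes
$$\int_0^T u_n(t)\bigl(-\eta'(t)+\lambda_n\eta(t)\bigr)dt=\scalarprod{\mu,\phi_n}\int_0^T\sigma(t)\eta(t)dt .$$
This is the weak form of $u_n'+\lambda_n u_n=\sigma\scalarprod{\mu,\phi_n}$ with $u_n(0)=0$. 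Hence $u_n(t)=\scalarprod{\mu,\phi_n}\int_0^te^{-\lambda_n(t-s)}\sigma(s)ds$; one checks that this function satisfies the identity, and uniqueness follows from Proposition~\ref{IBVPsolution}. Since $\sigma\in H^1(0,T)$ and $\sigma(0)=0$, integration by parts gives
$$\int_0^te^{-\lambda_n(t-s)}\sigma(s)ds=\frac{\sigma(t)}{\lambda_n}-\frac1{\lambda_n}\int_0^te^{-\lambda_n(t-s)}\sigma'(s)ds,$$
which is exactly \eqref{fo1}, because $\{\phi_n\}$ is an orthonormal basis and $u(t,\cdot)=\sum_n u_n(t)\phi_n$ in $L^2$.

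Next we establish the regularity of the two pieces. By \eqref{mureg}, $\sum_n\lambda_n^{-1-\delta}|\scalarprod{\mu,\phi_n}|^2<\infty$. For $G=\sum_n\lambda_n^{-1}\scalarprod{\mu,\phi_n}\phi_n$ this gives $\sum_n\lambda_n^{1-\delta}|\scalarprod{G,\phi_n}|^2<\infty$, so $G\in X^{1-\delta}\subset H^{1-\delta}(\Omega)$. Moreover $\scalarprod{G,-\Delta\psi}=\scalarprod{\mu,\psi}$ for every $\psi\in H^2\cap H^1_0$, which is precisely the transposition sense of \eqref{fo3}.

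For $H$, set $h_n(t)=\lambda_n^{-1}\scalarprod{\mu,\phi_n}(e^{-\lambda_n\cdot}*\sigma')(t)$. Young's inequality gives $\|e^{-\lambda_n\cdot}*\sigma'\|_{L^2(0,T)}\leq\lambda_n^{-1}\|\sigma'\|_{L^2}$, hence
$$\sum_n\lambda_n^{3-\delta}\|h_n\|_{L^2}^2\leq\|\sigma'\|^2_{L^2}\sum_n\lambda_n^{-1-\delta}|\scalarprod{\mu,\phi_n}|^2<\infty .$$
So $H$ converges in $L^2(0,T;X^{3-\delta})$. We then identify $X^{3-\delta}\subset H^{3-\delta}(\Omega)\cap H^1_0(\Omega)$ for $\delta\in(0,1)$; this uses the elliptic regularity characterization of $D(A^{s/2})$ on the smooth domain, where compatibility conditions only appear at order $5/2$.

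Finally, the boundary regularity of $G$ is a local elliptic argument mirroring Proposition~\ref{p1}. Take $\chi$ equal to $1$ near $\partial\Omega$ and vanishing near $\supp\mu\subset\mathcal B_{r_0}$. Then $-\Delta(\chi G)=-2\nabla\chi\cdot\nabla G-(\Delta\chi)G$ in the transposition sense, and $\chi G$ has zero trace. The right-hand side lies in $H^{-\delta}$, so elliptic regularity gives $\chi G\in H^{2-\delta}$. Repeating with a smaller cutoff $\chi_2$ (with $\chi\chi_2=\chi_2$) gives right-hand side in $H^{1-\delta}$, hence $H^{3-\delta}$, and one more step yields $H^3$ near the boundary, in fact $H^k$ for every $k$. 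One also needs to justify the transposition-sense commutator formula, using $G\in H^{1-\delta}$ and the fact that $\chi\mu=0$. Choosing $r$ so that $C_r$ lies in the final neighbourhood gives $G|_{C_r}\in H^3(C_r)\hookrightarrow\mathcal C^1(\overline{C_r})$ by Sobolev embedding in dimension two.

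The main obstacle I expect is bookkeeping rather than depth: identifying $X^{3-\delta}$ with $H^{3-\delta}\cap H^1_0$, and justifying the distributional manipulations with the measure in the first bootstrap step, where $G$ has only $H^{1-\delta}$ regularity.
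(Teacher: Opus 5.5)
Your proposal is correct and follows essentially the same route as the paper. Both use separated test functions $\eta(t)\phi_n$ to obtain the coefficient ODE, integrate by parts using $\sigma(0)=0$, derive summability from $\mu\in X^{-1-\delta}$ together with Young's inequality for $G$ and $H$, compute $\scalarprod{G,-\Delta\psi}$ spectrally, and run a cutoff bootstrap near $\partial\Omega$.

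The differences are minor. You test with $\eta(T)=0$ directly and then invoke uniqueness, whereas the paper solves a distributional ODE and then fixes the constant $a_n$. Your uniqueness step is valid: if $v$ solves the homogeneous scalar identity, then $v\phi_n$ solves the homogeneous transposition problem, so Proposition~\ref{IBVPsolution} forces $v=0$. You also start the bootstrap from a right-hand side in $H^{-\delta}$ instead of applying Lax--Milgram in $H^{-1}$.
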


In order to prove Theorem \ref{Fourierseriesofu}, we need three intermediate results. 

\begin{lem}\label{regG}
    The function  $G$ defined by \eqref{fo2} is lying in  $ H^{1-\varepsilon}(\Omega)$, for every $\varepsilon\in (0,1)$.
\end{lem}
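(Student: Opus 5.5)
Recall from \eqref{fo1}--\eqref{fo2} that $G=\sum_{n\geq1}\lambda_n^{-1}\scalarprod{\mu,\phi_n}\phi_n$. The plan is to show that this series converges in $X^{1-\varepsilon}$ and then pass from $X^{1-\varepsilon}$ to $H^{1-\varepsilon}(\Omega)$.

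Fix $\varepsilon\in(0,1)$ and set $\delta=\varepsilon$. By \eqref{mureg} we have $\mu\in X^{-1-\delta}$, so by Definition~\ref{frac_order_sobolev_spaces}
\[
\sum_{n\geq1}\lambda_n^{-1-\delta}\abs{\scalarprod{\mu,\phi_n}}^2=\norm{\mu}_{-1-\delta}^2<+\infty .
\]
The coefficients of $G$ are $g_n=\lambda_n^{-1}\scalarprod{\mu,\phi_n}$, and they satisfy
\[
\sum_{n\geq1}\lambda_n^{1-\varepsilon}\abs{g_n}^2=\sum_{n\geq1}\lambda_n^{-1-\varepsilon}\abs{\scalarprod{\mu,\phi_n}}^2=\norm{\mu}_{-1-\varepsilon}^2<+\infty .
\]
Since $\lambda_n\geq\lambda_1>0$, this sum also controls $\sum_n\abs{g_n}^2$. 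Hence the series defining $G$ converges in $L^2(\Omega)$, its limit lies in $X^{1-\varepsilon}$, and $\norm{G}_{1-\varepsilon}=\norm{\mu}_{-1-\varepsilon}$. In other words, $G=A^{-1}\mu$, where $A^{-1}$ acts as an isometry $X^{-1-\varepsilon}\to X^{1-\varepsilon}$.

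The remaining step is the identification $X^{1-\varepsilon}\subset H^{1-\varepsilon}(\Omega)$ with continuous embedding. I would argue by interpolation. We have $X^0=L^2(\Omega)$ and $X^1=H^1_0(\Omega)$ with equivalent norms, the latter because $\norm{u}_1^2=\norm{\nabla u}_{L^2}^2$. The family $\{X^s\}$ is the complex interpolation scale of the positive self-adjoint operator $A$, so $X^{s}=[L^2(\Omega),H^1_0(\Omega)]_{s}$ for $s\in(0,1)$. By the classical result of Lions--Magenes (Chapter~1, Theorem~11.6 in \cite{lionsmagenesvol1}), this interpolation space equals $H^s_0(\Omega)$ for $s\neq1/2$ and $H^{1/2}_{00}(\Omega)$ for $s=1/2$. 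In both cases it embeds continuously into $H^s(\Omega)$. Taking $s=1-\varepsilon$ then gives $G\in H^{1-\varepsilon}(\Omega)$.

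The only delicate point is this interpolation identification, and it is standard. Alternatively one could bypass it: since $X^{1+\delta}\subset H^{1+\delta}$ was already used to obtain \eqref{mureg}, the same reasoning interpolated between $X^0=L^2$ and $X^1=H^1_0$ yields the bounded embedding $X^{1-\varepsilon}\hookrightarrow H^{1-\varepsilon}(\Omega)$ directly.
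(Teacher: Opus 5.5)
Your proof is correct and follows the paper's argument. The paper likewise uses $\mu\in X^{-1-\delta}$ from \eqref{mureg} to get $\sum_n\lambda_n^{1-\delta}\lambda_n^{-2}\abs{\scalarprod{\mu,\phi_n}}^2=\sum_n\lambda_n^{-1-\delta}\abs{\scalarprod{\mu,\phi_n}}^2<+\infty$ (it takes $\delta\in(0,\frac12)$, and larger $\varepsilon$ follow by monotonicity); the only difference is that you justify the embedding $X^{1-\varepsilon}\hookrightarrow H^{1-\varepsilon}(\Omega)$ by interpolation, whereas the paper uses it without comment.
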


\begin{proof} Let  $\delta\in (0,\frac{1}{2})$. In view of \eqref{mureg}, we have $\mu\in X^{-1-\delta}$ and it follows
    \begin{align*}
        \sum_{n=1}^{+\infty}\lambda_n^{1-\delta}\frac{\abs{\scalarprod{\mu,\phi_n}}^2}{\lambda_n^2} = \sum_{n=1}^{+\infty}\lambda_n^{-1-\delta}\abs{\scalarprod{\mu,\phi_n}}^2 <+\infty,
    \end{align*}
     which implies that $G\in H^{1-\delta}(\Omega)$.
\end{proof}

\begin{lem}\label{regH}
    The function  $H$ defined by \eqref{fo2} is lying in $L^2(0,T;X^{3-\delta})\cap H^1(0,T;X^{1-\delta})$, for all $\delta>0$.
\end{lem}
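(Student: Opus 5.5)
We work directly with the Fourier coefficients of
\[
H(t,\cdot)=-\sum_{n\ge1} h_n(t)\phi_n, \qquad h_n(t):=\frac{\scalarprod{\mu,\phi_n}}{\lambda_n}\int_0^t e^{-\lambda_n(t-s)}\sigma'(s)\,ds .
\]
Membership in $X^{3-\delta}$ and $X^{1-\delta}$ is characterized by weighted $\ell^2$ sums of these coefficients, by Definition \ref{frac_order_sobolev_spaces}. So it suffices to bound
\[
\sum_n \lambda_n^{3-\delta}\norm{h_n}_{L^2(0,T)}^2 \quad\text{and}\quad \sum_n \lambda_n^{1-\delta}\norm{h_n'}_{L^2(0,T)}^2
\]
using only two facts: $\sigma'\in L^2(0,T)$ (since $\sigma\in H^1(0,T)$), and \eqref{mureg}, which gives $\sum_n\lambda_n^{-1-\delta}\abs{\scalarprod{\mu,\phi_n}}^2<\infty$ for every $\delta>0$.

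\textbf{First bound.} Write $h_n=\frac{\scalarprod{\mu,\phi_n}}{\lambda_n}\,(e_n*\sigma')$ with $e_n(t)=e^{-\lambda_n t}\mathbf 1_{t>0}$. Young's convolution inequality gives
\[
\norm{e_n*\sigma'}_{L^2(0,T)}\le\norm{e_n}_{L^1(0,T)}\norm{\sigma'}_{L^2(0,T)}\le\lambda_n^{-1}\norm{\sigma'}_{L^2(0,T)} .
\]
Hence
\[
\lambda_n^{3-\delta}\norm{h_n}_{L^2}^2\le \lambda_n^{3-\delta}\lambda_n^{-2}\lambda_n^{-2}\abs{\scalarprod{\mu,\phi_n}}^2\norm{\sigma'}^2_{L^2}=\lambda_n^{-1-\delta}\abs{\scalarprod{\mu,\phi_n}}^2\norm{\sigma'}_{L^2}^2,
\]
which is summable by \eqref{mureg}. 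Since each partial sum is measurable in $t$ with values in $X^{3-\delta}$, this also gives convergence of the series in $L^2(0,T;X^{3-\delta})$.

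\textbf{Second bound.} Differentiating the coefficients gives
\[
h_n'=\frac{\scalarprod{\mu,\phi_n}}{\lambda_n}\bigl(\sigma'-\lambda_n\, e_n*\sigma'\bigr).
\]
By the same Young bound, $\norm{h_n'}_{L^2}\le 2\lambda_n^{-1}\abs{\scalarprod{\mu,\phi_n}}\norm{\sigma'}_{L^2}$. Therefore
\[
\lambda_n^{1-\delta}\norm{h_n'}^2_{L^2}\le 4\lambda_n^{-1-\delta}\abs{\scalarprod{\mu,\phi_n}}^2\norm{\sigma'}^2_{L^2},
\]
again summable. This shows that the formally differentiated series converges in $L^2(0,T;X^{1-\delta})$.

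\textbf{Identifying the derivative.} It remains to check that this limit is the weak time derivative of $H$. The partial sums $H_N$ lie in $H^1(0,T;X^{1-\delta})$, since each $h_n$ is absolutely continuous with $h_n'\in L^2$. Moreover $H_N\to H$ in $L^2(0,T;X^{1-\delta})$, and $H_N'$ converges in the same space. Closedness of the weak derivative then identifies the limit of $H_N'$ as $\partial_t H$.

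The only delicate point is the bookkeeping of powers of $\lambda_n$. One full power of $\lambda_n^{-1}$ has to be extracted from the time convolution, using $\norm{e_n}_{L^1}\le\lambda_n^{-1}$. Using the crude bound $\abs{e_n}\le1$ instead would lose this power, and the sum would no longer be controlled by \eqref{mureg}.
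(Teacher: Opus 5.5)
Your proposal is correct and follows essentially the same route as the paper: Young's convolution inequality gives $\|h_n\|_{L^2(0,T)}\lesssim \lambda_n^{-2}\abs{\scalarprod{\mu,\phi_n}}$ and $\|h_n'\|_{L^2(0,T)}\lesssim \lambda_n^{-1}\abs{\scalarprod{\mu,\phi_n}}$, and \eqref{mureg} then makes both weighted sums summable. Your explicit identification of $\partial_t H$ via closedness of the weak derivative spells out a step the paper covers only with ``similarly''.
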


\begin{proof}
    Fix $n\in\N :=\{1,2,\ldots\}$ and denote $H_n$ the map defined by
    \begin{align}
        H_n(t) := -\int_0^te^{-\lambda_n(t-s)}\sigma'(s)\frac{\scalarprod{\mu,\phi_n}}{\lambda_n}ds,\quad \forall t\in[0,T].
    \end{align}
    Since $\sigma\in H^1(0,T)$ we get $H_n\in H^1(0,T)$ with $
        H_n'(t) = -\lambda_nH_n(t)-\sigma'(t)\frac{\scalarprod{\mu,\phi_n}}{\lambda_n}$, $t\in(0,T)$.
    Let us estimate $H_n(t)$ and $H_n'(t)$. Applying Young's inequality for convolution product,  we get
    \begin{align}
        \norm{H_n}_{L^2(0,T)}&\leq \norm{\sigma'}_{L^2(0,T)}\left(\int_0^{+\infty}e^{-\lambda_ns}ds\right)\frac{\left|\scalarprod{\mu,\phi_n}\right|}{\lambda_n}\leq \norm{\sigma'}_{L^2(0,T)}\frac{\left|\scalarprod{\mu,\phi_n}\right|}{\lambda_n^2},\label{Hn}\\
        \norm{H_n'}_{L^2(0,T)}&\leq \norm{\sigma'}_{L^2(0,T)}\frac{\left|\scalarprod{\mu,\phi_n}\right|}{\lambda_n} + \lambda_n\norm{\sigma'}_{L^2(0,T)}\frac{\left|\scalarprod{\mu,\phi_n}\right|}{\lambda_n^2}\leq 2\norm{\sigma'}_{L^2(0,T)}\frac{\left|\scalarprod{\mu,\phi_n}\right|}{\lambda_n}.\label{Hn'}
    \end{align}

     Let us prove that $H\in L^2(0,T,X^{3-\delta})$ which follows from the convergence in $L^2(0,T,X^{3-\delta})$ of the series $\sum\limits_{n=1}^{+\infty}H_n(t)\phi_n$. Set $n,m\in\N$ such that $n< m$ and $0<\delta<3$. Applying Fubini theorem combined with \eqref{Hn}, we get
    \begin{align*}
        \norm{\sum_{k=n+1}^mH_k(t)\phi_k}^2_{L^2(0,T;X^{3-\delta})}&\leq \sum_{k=n+1}^m\int_0^T\lambda_k^{3-\delta}\abs{H_k(t)}^2\\
        &\leq \sum_{k=n+1}^m\lambda_k^{3-\delta}\norm{\sigma'}_{L^2(0,T)}^2\frac{\left|\scalarprod{\mu,\phi_n}\right|^2}{\lambda_n^4}\\
        &\leq \norm{\sigma'}_{L^2(0,T)}^2\sum_{k=n+1}^m\lambda_k^{-1-\delta}\left|\scalarprod{\mu,\phi_n}\right|^2\xrightarrow[n,m\longrightarrow+\infty]{}0.
    \end{align*}
 Thus,  the series $\sum\limits_{n=1}^{+\infty}H_n(t)\phi_n$ is  a Cauchy  sequence in $L^2(0,T,X^{3-\delta})$, which means that it converges in $L^2(0,T,X^{3-\delta})$. It follows that $H\in L^2(0,T,X^{3-\delta})$.
Similarly, using \eqref{Hn'}, we show that $H\in H^1(0,T;X^{1-\delta})$.
\end{proof}
\begin{lem}\label{solve_distribution} 
Let $u_n$, $n\in\mathbb N$, be defined by 
$u_n(t) = \scalarprod{u(t,\cdot),\phi_n}$ for almost every $t\in(0,T)$. Then, we have 
\begin{align}\label{so1}
u_n(t) = \int_0^te^{-\lambda_n(t-s)}\sigma(s)ds\scalarprod{\mu,\phi_n}, \quad t\in(0,T).
\end{align}
\end{lem}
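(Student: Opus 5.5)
The idea is to test the transposition identity of Definition~\ref{d1} against separated test functions $w(t,x)=\eta(t)\phi_n(x)$. Here $\eta\in H^1(0,T)$ is arbitrary with $\eta(T)=0$. Since $\phi_n\in\mathcal C^\infty(\overline\Omega)\cap H^1_0(\Omega)$, such a $w$ belongs to the admissible class $L^2(0,T;H^2(\Omega)\cap H^1_0(\Omega))\cap H^1(0,T;L^2(\Omega))$ and satisfies $w(T,\cdot)=0$.

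Using $-\Delta\phi_n=\lambda_n\phi_n$ and Fubini, the left-hand side becomes $\int_0^T u_n(t)\bigl(-\eta'(t)+\lambda_n\eta(t)\bigr)\,dt$. Taking $\phi_n$ real-valued, which is always possible for the Dirichlet Laplacian, gives $u_n(t)=\int_\Omega u(t,x)\phi_n(x)\,dx$. The right-hand side equals $\scalarprod{\mu,\phi_n}\int_0^T\sigma(t)\eta(t)\,dt$. Since $u\in L^2(Q)$, we have $u_n\in L^2(0,T)$, and we obtain
\begin{align*}
\int_0^T u_n(t)\bigl(-\eta'(t)+\lambda_n\eta(t)\bigr)dt=\scalarprod{\mu,\phi_n}\int_0^T\sigma(t)\eta(t)\,dt,\qquad \forall \eta\in H^1(0,T),\ \eta(T)=0.
\end{align*}
This is exactly the weak (transposition) formulation of the scalar Cauchy problem $u_n'+\lambda_n u_n=\sigma\scalarprod{\mu,\phi_n}$, $u_n(0)=0$.

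To conclude, let $v_n(t):=\int_0^te^{-\lambda_n(t-s)}\sigma(s)\,ds\,\scalarprod{\mu,\phi_n}$. Since $\sigma\in L^2(0,T)$, $v_n\in H^1(0,T)$ with $v_n(0)=0$ and $v_n'+\lambda_nv_n=\sigma\scalarprod{\mu,\phi_n}$ a.e. An integration by parts shows that $v_n$ satisfies the same identity; the boundary terms vanish because $\eta(T)=0$ and $v_n(0)=0$. The difference $d=u_n-v_n\in L^2(0,T)$ then satisfies $\int_0^T d\,(-\eta'+\lambda_n\eta)\,dt=0$ for all admissible $\eta$.

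The main step is to show this forces $d=0$. I will use surjectivity of the adjoint problem. Given any $g\in L^2(0,T)$, set $\eta(t)=\int_t^Te^{-\lambda_n(s-t)}g(s)\,ds$. Then $\eta\in H^1(0,T)$, $\eta(T)=0$ and $-\eta'+\lambda_n\eta=g$. Hence $\int_0^T d\,g\,dt=0$ for every $g\in L^2(0,T)$, so $d=0$ a.e. This yields \eqref{so1} for almost every $t$, and for every $t$ after identifying $u_n$ with its continuous representative $v_n$. The only delicate points are the admissibility of the separated test functions and the use of Fubini to pull $\phi_n$ out of the pairing; both are routine.
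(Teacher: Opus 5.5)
Your proof is correct. It starts as the paper does, by testing the transposition identity with separated functions $\eta(t)\phi_n(x)$, but it closes the argument differently.

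\textbf{The paper's route.} It first takes $\eta\in\mathcal C^\infty_0(0,T)$ to get the ODE $u_n'+\lambda_nu_n=\sigma\scalarprod{\mu,\phi_n}$ in $\mathcal D'(0,T)$. It then writes the general solution as $a_ne^{-\lambda_nt}$ plus the Duhamel term. Finally it kills $a_n$ with one extra test function satisfying $\psi(T)=0$, $\psi(0)=-1$, which recovers the initial condition $u_n(0)=0$ hidden in the weak formulation.

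\textbf{Your route.} You use the whole class $\{\eta\in H^1(0,T):\eta(T)=0\}$ at once. You check that the Duhamel candidate $v_n$ satisfies the same identity. You then conclude $u_n=v_n$ because $\eta\mapsto-\eta'+\lambda_n\eta$ maps this class onto $L^2(0,T)$. This is the scalar version of the duality argument behind the uniqueness part of Proposition~\ref{IBVPsolution}.

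\textbf{Comparison.} Your approach avoids solving a distributional ODE and identifying an integration constant. It also shows that only $\sigma\in L^2(0,T)$ is needed; the paper invokes $\sigma\in H^1(0,T)$ at this point, although it plays no role. The paper's approach is a bit more explicit about where the initial condition enters.

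\textbf{A small remark.} You do not need to assume $\phi_n$ real-valued. For the complex Bessel basis used in Section~\ref{sectionproof}, test with $\eta\,\overline{\phi_n}$ instead. This is admissible and has the same eigenvalue. The left side is then $\int_0^T\scalarprod{u(t,\cdot),\phi_n}\bigl(-\eta'+\lambda_n\eta\bigr)dt$, and the right side involves $\int_\Omega\overline{\phi_n}\,d\mu$, consistent with the sesquilinear convention for $\scalarprod{\cdot,\cdot}$. The rest of your argument goes through verbatim.
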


\begin{proof} Fix $\psi\in C^\infty_0(0,T)$, and consider $w(t,x)=\psi(t)\phi_n(x)$, $(t,x)\in Q$.
Recalling that $u$ is the solution in the transposition sense to \eqref{eqmu} and $w\in C^\infty_0(0,T;H^2(\Omega)\cap H^1_0(\Omega))$,
we find
$$\begin{aligned}-\int_0^Tu_n(t)\psi'(t)dt+\lambda_n\int_0^Tu_n(t)\psi'(t)dt=\int_Qu(-\partial_tw-\Delta w)dxdt&=\int_0^T\int_\Omega \sigma(t)w(t,x)\mu(x)dt\\
&=\int_0^T\int_\Omega \sigma(t)\phi_n(x)\psi(t)d\mu(x)dt.\end{aligned}$$
Thus, $u_n$ solves in the sense of distribution in $(0,T)$ the ordinary differential equation $u_n' + \lambda_nu_n = \sigma\scalarprod{\mu,\phi_n}$. This implies that there exists $a_n\in R$ such that 
\begin{align}\label{so2}u_n(t) =a_ne^{-\lambda_n t}+ \int_0^te^{-\lambda_n(t-s)}\sigma(s)ds\scalarprod{\mu,\phi_n}.\end{align}
Therefore, the proof of \eqref{so1} will be completed if we show that $a_n=0$. In view of \eqref{so2} and the fact that $\sigma\in H^1(0,T)$, it is clear that $u_n\in H^1(0,T)$, $u_n(0)=a_n$ and, for almost every $t\in(0,T)$, we have
\begin{align}\label{so3}u_n'(t) + \lambda_nu_n(t) = \sigma(t)\scalarprod{\mu,\phi_n}.\end{align}
 Choosing $\psi\in C^1([0,T])$ with $\psi(T)=0$, $\psi(0)=-1$,  $w(t,x)=\psi(t)\phi_n(x)$, $(t,x)\in Q$, and  repeating the above argumentation combined with \eqref{so3}, we find
$$\begin{aligned}
\int_0^T\sigma(t)\scalarprod{\mu,\phi_n}\psi(t)dt+a_n&=\int_0^T(u_n'+\lambda_nu_n)\psi(t)dt-u_n(0)\psi(0)\\
&=-\int_0^Tu_n(t)\psi'(t)dt+\lambda_n\int_0^Tu_n(t)\psi(t)dt\\
&=\int_Qu(-\partial_tw-\Delta w)dxdt
=\int_0^T\sigma(t)\scalarprod{\mu,\phi_n}\psi(t)dt.\end{aligned}$$
This proves that $a_n=0$ and we obtain \eqref{so1} from \eqref{so3}.

\end{proof}
 Armed with Lemma \ref{regG}, \ref{regH} and \ref{solve_distribution} we are now in position to complete the proof of Theorem \ref{Fourierseriesofu}.
\begin{proof}[\bf{Proof of Theorem \ref{Fourierseriesofu}}]
    Applying Lemma \ref{solve_distribution}, for almost every $t\in(0,T)$, we have
    \begin{align}\label{pf3}u(t,\cdot)=\sum_{n=1}^{+\infty}u_n(t)\phi_n,\quad 
        u_n(t) = \int_0^te^{-\lambda_n(t-s)}\sigma(s)\scalarprod{\mu,\phi_n}ds.
    \end{align}
    Recalling that $\sigma\in H^1(0,T)$, with $\sigma(0) = 0$, and integrating by parts, we obtain
    \begin{align*}
       \int_0^te^{-\lambda_n(t-s)}\sigma(s)\scalarprod{\mu,\phi_n}ds  &= \left[\frac{1}{\lambda_n}e^{-\lambda_n(t-s)}\sigma(s)\scalarprod{\mu,\phi_n} \right]_0^t - \int_0^te^{-\lambda_n(t-s)}\sigma'(s)\frac{\scalarprod{\mu,\phi_n}}{\lambda_n}ds\\
        &= \sigma(t)\frac{\scalarprod{\mu,\phi_n}}{\lambda_n} - \int_0^te^{-\lambda_n(t-s)}\sigma'(s)\frac{\scalarprod{\mu,\phi_n}}{\lambda_n}ds.
    \end{align*}
This proves \eqref{fo1} and, in view of Lemma \ref{regG}, \ref{regH}, we have
$H\in L^2(0,T,H^{3-\delta}(\Omega)\cap H_0^1(\Omega))$ and $G\in H^{1-\delta}(\Omega)$.

Let us  prove that $G$ solves \eqref{fo3} in the sense of transposition. 
For this purpose, fix $\psi\in H^2(\Omega)\cap H_0^1(\Omega)$ and observe that
    \begin{align*}
        \scalarprod{G,-\Delta\psi}_{L^2(\Omega)} = \sum_{n=1}^{+\infty}\frac{\scalarprod{\mu,\phi_n}}{\lambda_n}\scalarprod{\phi_n,-\Delta\psi}= \sum_{n=1}^{+\infty}\scalarprod{\mu,\phi_n}\scalarprod{\psi,\phi_n}.
    \end{align*}

    Now recalling that $\psi\in D(A) = H^2(\Omega)\cap H_0^1(\Omega)$ then the series $\sum\limits_{n=1}^{+\infty}\scalarprod{\psi,\phi_n}\phi_n$ converges in $D(A)$. Since $\mu\in X^{-2}$ we find
    \begin{align*}
        \scalarprod{G,-\Delta\psi}_{L^2(\Omega)} = \lim_{N\rightarrow+\infty}\sum_{n=1}^N\scalarprod{\mu,\phi_n}_{-1,1}\scalarprod{\psi,\phi_n}
        &= \lim_{N\rightarrow+\infty}\scalarprod{\mu,\sum_{n=1}^N\scalarprod{\psi,\phi_n}\phi_n}_{-1,1}\\
        &= \scalarprod{\mu,\psi}_{-1,1} = \int_{\Omega}\psi(x)d\mu(x).
    \end{align*}
This clearly proves that $G$ solves \eqref{fo3} in the sense of transposition.

In order to complete the proof of the theorem, we only need to show the last statement of the theorem. Similarly to Proposition \ref{p1}, we set $G_1(x) = \chi(x) G(x)$, $x\in\Omega$, where $\chi = 1$ on a neighbourhood of  $\overline{V}$ and $\chi=0$ on a neighbourhood of $\overline{\mathcal{B}_{r_0}}$. Then for all $\phi\in H^2(\Omega)\cap H_0^1(\Omega)$ we have $\chi\phi\in H^2(\Omega)\cap H_0^1(\Omega)$ 	and $\scalarprod{\mu,\chi\phi} = 0$ as $\chi =0$ on $\supp\mu$ so in the transposition sense  we have
    \begin{align*}
        0=\scalarprod{G,-\Delta(\chi\phi)}_{L^2(\Omega)}  &= \scalarprod{G(-\Delta\chi),\phi}_{L^2(\Omega)}  + \scalarprod{G_1,-\Delta\phi}_{L^2(\Omega)}  + \scalarprod{G,-2\nabla\chi\cdot\nabla\phi}_{L^2(\Omega)} \\
        &=\scalarprod{G_1,-\Delta\phi}_{L^2(\Omega)}- \scalarprod{((-\Delta\chi)G -2\nabla\chi\cdot\nabla G),\phi}_{H^{-1}(\Omega),H^1_0(\Omega)}.
    \end{align*}
  Therefore, the function $G_1$ solves in the transposition sense the problem :
   \begin{align}\label{eqtruncationG1}
   		\left\lbrace
   		\begin{array}{rcll}
   			-\Delta G_1 &=& (-\Delta\chi)G -2\nabla\chi\cdot\nabla G,\quad &\text{in }\Omega.\\
   			G_1 &=& 0,\quad &\text{on }\partial\Omega.
   		\end{array}
   		\right.
   \end{align}

	Then, by the Lax-Milgram theorem we deduce $G_1\in H_0^1(\Omega)$. Similarly, fixing $\chi_2\in C^\infty(\R^2)$, such that $\chi_2=0$ on a neighbourhood of $\Omega\setminus V$ and $\chi_2=1$ on a neighbourhood $U_1$ of $\partial\Omega$, and fixing  $G_2 = \chi_2 G_1$ we have $G_2 = \chi_2 G_1 = \chi_2\chi G = \chi_2 G$. Then, it follows
    \begin{align*}
        -\Delta G_2 = \chi_2(-\Delta G_1) -\Delta\chi_2 G_1 - 2\nabla\chi_2\cdot\nabla G_1= (-\Delta\chi_2) G_1 - 2\nabla\chi_2\cdot\nabla G_1,
    \end{align*}
    which proves that $G_2$ solves the equation $-\Delta G_2 = (-\Delta\chi_2) G_1 - 2\nabla\chi_2\cdot\nabla G_1$ in $\Omega$.
Recalling that $- 2\nabla\chi_2\cdot\nabla G_1\in L^2(\Omega)$ and applying \cite[Theorem 5.2, Chapter 2]{lionsmagenesvol1}, we deduce that $G_2\in H^2(\Omega)\cap H_0^1(\Omega)$. This implies that $G|_{U_1}=G_2|_{U_1}\in H^2(U_1)$. Repeating these arguments and applying \cite[Theorem 5.2, Chapter 2]{lionsmagenesvol1}, we can find a  neighbourhood $U_2$ of $\partial\Omega$, with smooth boundary, such that $G|_{U_2}\in H^3(U_1)$. Fixing $r\in(0,1)$ such that $C_r\subset U_2$ and applying  Sobolev embedding theorem, we deduce that $G|_{C_r}\in H^3(C_r)\hookrightarrow\mathcal{C}^1(\overline{C_r})$. This completes the proof of the theorem.
    
\end{proof}

\subsection{Boundary representation  of the solution and analyticity}
Let $u\in L^2(Q)$ be the solution in the transposition sense of \eqref{eqmu}. In view of Proposition \ref{p1}, we can define $\partial_\nu u|_{\Sigma}=\nabla u\cdot\nu|_{\Sigma}$ as an element of $L^2(0,T;\mathcal C(\partial\Omega))$. Combining this with  the Fourier series decomposition of $u$ stated in Theorem \ref{Fourierseriesofu}, we can derive the explicit  boundary flux representation.


\begin{prop}\label{bound}
     The series associated with $\partial_{\nu}H$ converges uniformly in $L^2(0,T;\mathcal{C}(\partial\Omega))$ and the normal derivative $\partial_{\nu}u$ of the solution $u$   of \eqref{eqmu}  can have the following spectral representation
    \begin{align}\label{bound-rep}
        \partial_{\nu}u(t,z) = \sigma(t)\partial_{\nu}G(z) - \sum_{n=1}^{+\infty}\left(\int_0^te^{-\lambda_n(t-s)}\sigma'(s)\frac{\scalarprod{\mu,\phi_n}}{\lambda_n}ds\right)\partial_{\nu}\phi_n(z),\quad (t,z)\in \Sigma.
    \end{align}
\end{prop}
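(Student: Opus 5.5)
Starting from the decomposition $u=\sigma G+H$ of Theorem \ref{Fourierseriesofu}, the plan is to take normal traces of each piece separately on $\partial\Omega$. By Theorem \ref{Fourierseriesofu}, $G|_{C_r}\in H^3(C_r)\hookrightarrow\mathcal C^1(\overline{C_r})$, so $\partial_\nu G$ is a well defined continuous function on $\partial\Omega$. Since $\sigma\in H^1(0,T)\subset L^\infty(0,T)$, the term $\sigma\,\partial_\nu G$ lies in $L^2(0,T;\mathcal C(\partial\Omega))$. By Proposition \ref{p1}, $u|_{(0,T)\times\overline{C_r}}\in L^2(0,T;\mathcal C^1(\overline{C_r}))$. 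Hence $H=u-\sigma G$ also belongs to $L^2(0,T;\mathcal C^1(\overline{C_r}))$, and $\partial_\nu u=\sigma\partial_\nu G+\partial_\nu H$ holds in $L^2(0,T;\mathcal C(\partial\Omega))$. It then remains to identify $\partial_\nu H$ with the termwise differentiated series and to prove that this series converges in $L^2(0,T;\mathcal C(\partial\Omega))$.

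For the series, I use Lemma \ref{regH}: $\sum_n H_n(t)\phi_n$ converges to $H$ in $L^2(0,T;X^{3-\delta})$ for every $\delta>0$. I fix $\delta\in(0,1/2)$. On the unit disc, standard elliptic regularity for the Dirichlet Laplacian gives the continuous embedding $X^{3-\delta}\hookrightarrow H^{3-\delta}(\Omega)$; this is already used in Theorem \ref{Fourierseriesofu}, where $H$ is asserted to converge in $L^2(0,T;H^{3-\delta}(\Omega)\cap H^1_0(\Omega))$. In dimension two, $H^{3-\delta}(\Omega)\hookrightarrow\mathcal C^1(\overline\Omega)$ because $3-\delta-1>1$. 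Consequently the normal-trace map $f\mapsto\partial_\nu f|_{\partial\Omega}$ is bounded from $X^{3-\delta}$ into $\mathcal C(\partial\Omega)$. Applying this bounded linear map to the partial sums, which converge in $L^2(0,T;X^{3-\delta})$, I get that $\sum_n H_n(t)\partial_\nu\phi_n$ converges in $L^2(0,T;\mathcal C(\partial\Omega))$, and its limit is $\partial_\nu H$. Substituting the explicit form of $H_n$ then yields \eqref{bound-rep}.

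The only delicate point I expect is the embedding $X^{3-\delta}\subset H^{3-\delta}(\Omega)$ for a non-integer exponent. Here $X^{3-\delta}$ consists of functions $f\in X^2=H^2\cap H^1_0$ with $Af\in X^{1-\delta}$. Since $1-\delta<3/2$, the space $X^{1-\delta}$ coincides with $H^{1-\delta}(\Omega)$ when $\delta>1/2$, and with the corresponding $H^1_0$-interpolation space otherwise; in either case it embeds into $H^{1-\delta}(\Omega)$. Elliptic regularity for $-\Delta$ on the smooth domain $\mathbb D$ then gives $f\in H^{3-\delta}(\Omega)$. If I want to avoid this interpolation argument, I can instead rerun the Cauchy estimate of Lemma \ref{regH} with the pointwise bound $\|\partial_\nu\phi_n\|_{\mathcal C(\partial\Omega)}\le C\|\phi_n\|_{H^{3-\delta}}\le C'\lambda_n^{(3-\delta)/2}$. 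However, this termwise bound combined with Cauchy–Schwarz loses summability, so I prefer the functional-analytic argument above.
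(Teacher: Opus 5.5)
Your proposal is correct and follows essentially the paper's route. Both arguments take the convergence of the series for $H$ in $L^2(0,T;H^{3-\delta}(\Omega))$ from Theorem \ref{Fourierseriesofu} and push it through a bounded linear map into $\mathcal C(\partial\Omega)$: the paper uses the normal trace $H^{3-\delta}(\Omega)\to H^{\frac32-\delta}(\partial\Omega)$ followed by Sobolev embedding on $\partial\Omega$, while you use $H^{3-\delta}(\Omega)\hookrightarrow\mathcal C^1(\overline\Omega)$, and your identification $\partial_\nu u=\sigma\partial_\nu G+\partial_\nu H$ via Proposition \ref{p1} together with your check of $X^{3-\delta}\hookrightarrow H^{3-\delta}(\Omega)$ only spell out what the paper calls ``easily deduced''.
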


\begin{proof}
    We remark that $G\in H^3(C_r)$ at the boundary so its normal derivative $\partial_{\nu}G$ is well defined and continuous on $\mathbb{S}^1$. We know from Theorem \ref{Fourierseriesofu} that the series 
$$\sum_{n\in\mathbb N}\left(\int_0^te^{-\lambda_n(t-s)}\sigma'(s)\frac{\scalarprod{\mu,\phi_n}}{\lambda_n}ds\right)\phi_n$$
converges in $L^2(0,T;H^{3-\delta}(\Omega))$, $\delta\in(0,1/2)$. By the continuity of the map $v\mapsto\partial_\nu v$ from $L^2(0,T;H^{3-\delta}(\Omega))$ to $L^2(0,T;H^{\frac 3 2-\delta}(\partial\Omega))$, we know that the series 
$$\partial_\nu\left(\sum_{n\in\mathbb N}\left(\int_0^te^{-\lambda_n(t-s)}\sigma'(s)\frac{\scalarprod{\mu,\phi_n}}{\lambda_n}ds\right)\phi_n\right)|_{\Sigma}=\sum_{n\in\mathbb N}\left(\int_0^te^{-\lambda_n(t-s)}\sigma'(s)\frac{\scalarprod{\mu,\phi_n}}{\lambda_n}ds\right)\partial_\nu\phi_n|_{\partial\Omega}$$
converges in $L^2(0,T;H^{\frac 3 2-\delta}(\partial\Omega))$ and, the Sobolev embedding theorem implies that this convergence occurs in $L^2(0,T;\mathcal C(\partial\Omega))$. Using these properties, we can easily deduce \eqref{bound-rep}.

\end{proof}

In addition to the above property of boundary representation, using the fact that $\sigma$ is constant on $(T_1,T)$, we can derive properties of analyticity in time of $\partial_\nu u$. Such property follows from the following result.

\begin{prop}\label{ana}
We can define on $P := \left\lbrace\ z\in\C\; ;\;Re(z)>T_1\right\rbrace$ the map $F$ by
\begin{align*}
        F(z) := \sum_{n = 1}^{+\infty}\left(\int_0^{T_1}e^{-\lambda_n(z-s)}\sigma'(s)\frac{\scalarprod{\mu,\phi_n}}{\lambda_n}ds\right)\partial_{\nu}\phi_n.
    \end{align*}
Moreover, $F$ is holomorphic on $P$ as a map taking values in $\mathcal C(\partial\Omega)$.
\end{prop}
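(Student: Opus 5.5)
We plan to prove Proposition~\ref{ana} by showing that each term of the series is an entire function of $z$ with values in $\mathcal C(\partial\Omega)$, and that the series converges uniformly on compact subsets of $P$. The conclusion then follows from the Weierstrass theorem for Banach-space-valued holomorphic functions.

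\textbf{Step 1: rewrite the terms.} For each $n$ set
\begin{align*}
F_n(z):=e^{-\lambda_n z}\,a_n\,\frac{\scalarprod{\mu,\phi_n}}{\lambda_n}\,\partial_\nu\phi_n,\qquad a_n:=\int_0^{T_1}e^{\lambda_n s}\sigma'(s)ds.
\end{align*}
Since $\phi_n\in\mathcal C^\infty(\overline{\Omega})$, we have $\partial_\nu\phi_n\in\mathcal C(\partial\Omega)$. Hence $F_n$ is a scalar entire function times a fixed vector, and so it is entire with values in $\mathcal C(\partial\Omega)$.

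\textbf{Step 2: a bound for each term.} Write $z=x+iy$. On the closed half-plane $x\geq T_1+\varepsilon$ we have $|e^{-\lambda_n(z-s)}|=e^{-\lambda_n(x-s)}\leq e^{-\lambda_n\varepsilon}$ for $s\in(0,T_1)$. Together with Cauchy--Schwarz this gives
\begin{align*}
\normm{F_n(z)}_{\mathcal C(\partial\Omega)}\leq \sqrt{T_1}\,\normm{\sigma'}_{L^2(0,T)}\,e^{-\lambda_n\varepsilon}\,\frac{|\scalarprod{\mu,\phi_n}|}{\lambda_n}\,\normm{\partial_\nu\phi_n}_{\mathcal C(\partial\Omega)}.
\end{align*}

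\textbf{Step 3: polynomial growth of the remaining factors.} We need polynomial bounds in $\lambda_n$ for the other two factors.
\begin{itemize}
\item Since $\mu$ is a finite Radon measure, $|\scalarprod{\mu,\phi_n}|\leq|\mu|(\Omega)\normm{\phi_n}_{\mathcal C(\overline\Omega)}$.
\item By elliptic regularity, $\normm{\phi_n}_{H^{k}(\Omega)}\leq C_k\lambda_n^{k/2}$, because $\phi_n\in D(A^{k/2})$ with norm $\lambda_n^{k/2}$ for even $k$ (use $D(A^m)\subset H^{2m}$ on the smooth domain).
\item With Sobolev embedding ($H^2\hookrightarrow\mathcal C$ and $H^3\hookrightarrow\mathcal C^1$) and the trace, this gives $\normm{\phi_n}_{\mathcal C(\overline\Omega)}\leq C\lambda_n$ and $\normm{\partial_\nu\phi_n}_{\mathcal C(\partial\Omega)}\leq C\lambda_n^{2}$.
\end{itemize}
One could also use the explicit Bessel form of the eigenfunctions on the disc, but the crude polynomial bounds suffice. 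The $n$-th term is therefore bounded by $C\lambda_n^{2}e^{-\lambda_n\varepsilon}$.

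\textbf{Step 4: summability.} By Weyl's law in dimension two, $\lambda_n\sim 4\pi n/|\Omega|$, so $\lambda_n\geq cn$ for some $c>0$. Hence $\sum_n\lambda_n^2e^{-\lambda_n\varepsilon}<\infty$. The series thus converges normally, and so uniformly, on $\{\mathrm{Re}\,z\geq T_1+\varepsilon\}$ for every $\varepsilon>0$. This shows that $F$ is well defined on $P$ and, as a locally uniform limit of holomorphic $\mathcal C(\partial\Omega)$-valued functions, is holomorphic there.

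The main obstacle is Step 3: obtaining a clean polynomial bound on $\normm{\partial_\nu\phi_n}_{\mathcal C(\partial\Omega)}$ and on $|\scalarprod{\mu,\phi_n}|$. This is the point where the low regularity of $\mu$ could interfere. The exponential factor $e^{-\lambda_n\varepsilon}$ absorbs any polynomial loss, so the argument only requires that these growth rates are finite powers of $\lambda_n$, which the elliptic estimates provide.
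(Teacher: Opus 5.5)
Your proof is correct. Its skeleton matches the paper's: each term is holomorphic, the series converges locally uniformly on $P$, and the Weierstrass theorem for $\mathcal C(\partial\Omega)$-valued maps finishes. The difference is in how the uniform convergence is obtained.

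\textbf{Your route.} You bound every term separately in $\mathcal C(\partial\Omega)$. This forces you to control $\|\phi_n\|_{\mathcal C(\overline\Omega)}$ and $\|\partial_\nu\phi_n\|_{\mathcal C(\partial\Omega)}$ individually by powers of $\lambda_n$, using $D(A^2)\subset H^4(\Omega)$ and Sobolev embeddings. You then need Weyl's law to sum $\lambda_n^2e^{-\varepsilon\lambda_n}$.

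\textbf{The paper's route.} The paper never looks at single eigenfunctions. For $\mathrm{Re}\,z\geq a>T_1$, it estimates a whole block $\sum_{n=N+1}^M E_n(z)\phi_n$ in $\mathcal C^1(\overline\Omega)$ through the chain $D(A^{1+\varepsilon})\hookrightarrow H^{2+2\varepsilon}(\Omega)\hookrightarrow\mathcal C^1(\overline\Omega)$. Orthonormality turns this norm into $\sum_n\lambda_n^{2+2\varepsilon}|E_n(z)|^2$. The factor $\lambda_n^{1+2\varepsilon}e^{-2\lambda_n(a-T_1)}$ only needs to be bounded, and summability comes from $\sum_n|\langle\mu,\phi_n\rangle|^2\lambda_n^{-2}<\infty$, i.e.\ $\mu\in X^{-2}$.

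\textbf{Trade-offs.}
\begin{itemize}
\item The paper needs no eigenvalue counting and no pointwise eigenfunction bounds. It also works verbatim for any source lying in some $X^{-s}$, since the exponential absorbs any power of $\lambda_n$.
\item Your argument uses only $|\mu|(\Omega)<\infty$, but pays with an external spectral input.
\item In exchange you get normal (absolute) convergence on each closed half-plane $\{\mathrm{Re}\,z\geq T_1+\varepsilon\}$. This is insensitive to how eigenfunctions are ordered or grouped, which is convenient when the paper later switches to the basis $\{\phi_{n,k}\}$.
\end{itemize}

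If you want to drop Weyl's law, a weaker fact suffices: $\sum_n\lambda_n^{-2}<\infty$ in dimension two. This holds because $A^{-1}$ is Hilbert--Schmidt, the Dirichlet Green function being square integrable on $\Omega\times\Omega$. The bound $\lambda_n^2e^{-\varepsilon\lambda_n}\leq C_\varepsilon\lambda_n^{-2}$ then gives summability.
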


\begin{proof}
 For $n\in\N$, we set $ E_n(z) := \int_0^{T_1}e^{-\lambda_n(z-s)}\sigma'(s)\frac{\scalarprod{\mu,\phi_n}}{\lambda_n}ds$, $
        F_n(z) := E_n(z)\partial_{\nu}\phi_n$,
    where $z\in\mathbb C$ satisfies $\Re(z)>T_1$. Thanks to the Cauchy-Schwarz inequality and the fact that 
    $-\lambda_n(\Re(z)-s)<-\lambda_n(T_1-s)$ for $s\in(0,T_1)$ then we have for $z\in P$,
    \begin{align*}
        \abs{E_n(z)}^2\leq \left(\int_0^{T_1}e^{-\lambda_n(\Re(z)-s)}\abs{\sigma'(s)}ds\right)^2\frac{\abs{\scalarprod{\mu_j,\phi_n}}^2}{\lambda_n^2}
        &\leq \frac{\abs{\scalarprod{\mu_j,\phi_n}}^2}{\lambda_n^2}\norm{\sigma'}^2_{L^2(0,T_1)}\int_0^{T_1}e^{-2\lambda_n(T_1-s)}ds\\
        &\leq \frac{\abs{\scalarprod{\mu_j,\phi_n}}^2}{2\lambda_n^3}\norm{\sigma'}^2_{L^2(0,T_1)}\leq C\norm{\sigma'}_{L^2(0,T_1)}^2.
    \end{align*}
    The integrand of $E_n$ is holomorphic with respect to $z$ in the half plane $P$ so $E_n$ is holomorphic in the same domain like $F_n$. And now we use the Weierstrass Theorem for holomorphic functions. So we prove that $\left(\sum\limits_{n=1}^NF_n(z)\right)_{N\geq 1}$ is uniformly a Cauchy sequence on every compact $K$ in the half plane $P$. Fix $K\subset P$ a compact then there exists $a,b\in\R$ such that $T_1< a\leq\Re(z)\leq b$ for $z\in K$. We also observe that for $1\leq N<M$ and $z\in K$,
    \begin{align*}
        \normm{\sum_{n=N+1}^MF_n(z)}_{\mathcal C(\partial\Omega)}=  \normm{\partial_{\nu}\left(\sum_{n=N+1}^ME_n(z)\phi_n\right)}_{\mathcal C(\partial\Omega)}\leq \left\lVert\sum_{n=N+1}^ME_n(z)\phi_n\right\rVert_{\mathcal{C}^1(\overline{\Omega})}.
    \end{align*}
     Hence by the embeddings $D(A^{1+\varepsilon})\hookrightarrow H^{2+2\varepsilon}(\Omega)\hookrightarrow\mathcal{C}^1(\overline{\Omega})$ for $\varepsilon\in(0,\frac{1}{2})$, in view of the Cauchy-Schwarz inequality we obtain for every $1\leq N< M$,
    \begin{align*}
        \sup_{z\in K}\left|\sum_{n=N+1}^MF_n(z)\right|^2
        &\leq C\sup_{z\in K}\left\lVert\sum_{n=N+1}^ME_n(z)\phi_n\right\rVert_{H^{2+2\varepsilon}(\Omega)}^2\\
        &\leq C\sup_{z\in K}\left\lVert\sum_{n=N+1}^ME_n(z)\phi_n\right\rVert_{D(A^{1+\varepsilon})}^2\\
        &\leq C\sup_{z\in K}\sum_{n=N+1}^M \lambda_n^{2+2\varepsilon}\abs{E_n(z)}^2\\
        &\leq C\norm{\sigma'}_{L^2(0,T)}^2\sum_{n=N+1}^M\lambda_n^{1+2\varepsilon}e^{-2\lambda_n(a-T_1)}\frac{\abs{\scalarprod{\mu_j,\phi_n}}^2}{2\lambda_n^2}.
    \end{align*}
    Recalling that the sequence $(\lambda_n^{1+2\varepsilon}e^{-2\lambda_n(a-T_1)})_{n\geq 1}$ is bounded since $a-T_1>0$ and $\sum\limits_{n=1}^{+\infty}\frac{\abs{\scalarprod{\mu_j,\phi_n}}^2}{\lambda_n^2}<+\infty$, we deduce that $\left(\sum\limits_{n=1}^NF_n(z)\right)_{N\geq 1}$ is uniformly a Cauchy sequence on $K$ as a map taking values in $\mathcal C(\partial\Omega)$. So the series $\sum\limits_{n=1}^{+\infty}F_n(z)$ converges uniformly on $K$ to the function $F$ for every compact $K\subset P$ as maps taking values in $\mathcal C(\partial\Omega)$. Therefore, by the Weierstrass Theorem, $F$ is holomorphic on the half plane $P$ as maps taking values in $\mathcal C(\partial\Omega)$.
\end{proof}

\section{Proof of Theorem \ref{inv_prob}}\label{sectionproof}
The goal of this section is to combine the results of all the preceding sections for completing the proof of the main result stated in Theorem \ref{inv_prob}. In contrast to the previous section, we denote by $\{\lambda_n:\ n\in\mathbb N\}$ the class of strictly increasing sequence of eigenvalues of $A$ and $d_n$ the algebraic multiplicity of $\lambda_n$. For each eigenvalue $\lambda_n$, we introduce an orthonormal basis $\{\phi_{n,k}\}_{k=1}^{d_n}$ of eigenspace of $A$ associated with $\lambda_n$.  Using the fact that $\Omega$ is the unit disc of $\R^2$, we consider explicit form of the orthonormal basis $\{\phi_{n,k}: n\in \mathbb N,\ k=1,\ldots,d_n\}$ of $L^2(\Omega)$ of eigenvectors of the operator $A$ by mean of Bessel functions and harmonic spheric. More precisely, following \cite[Theorem 2.1]{gong2026identification} and \cite[Section 2.2]{Li2020fractionalorder_sourceterm}, from now on, for $n\in\mathbb N$ and $k\in\{1,\ldots,d_n\}$, we fix $\phi_{n,k}$ explicitly expressed in polar coordinates by
\begin{align}\label{eigen}
   \phi_{n,k}(r,\theta) = \omega_n J_{m(n)}(\sqrt{\lambda_n} r)\exp(i (-1)^{k+1} m(n)\theta),\quad r\in(0,1),\ \theta\in(-\pi,\pi),\ k=1,\ldots,d_n
\end{align}
where $m:\mathbb N\to\mathbb N$, for all $\ell\in\mathbb N$, $J_\ell$ is the Bessel function of the first kind of order $\ell$ and $\omega_n =(\sqrt{\pi} J_{\abs{m(n)}+1}(\sqrt{\lambda_n}))^{-1}$. In this representation of the eigenfunctions, for every $n\in\mathbb N$, $m(n)$ is chosen uniquely   in such way that $\sqrt{\lambda_n}$ is a positive root of the Bessel function $J_{m(n)}$ (see e.g. \cite[Remark 2.1]{Li2020fractionalorder_sourceterm}).
As observed in \cite[Theorem 2.1]{gong2026identification} and \cite[Section 2.2]{Li2020fractionalorder_sourceterm}, 
we have $d_1=1$, $m(1)=0$ and, for $n\geq 2$, we get $d_n=2$ and $m(n)\geq1$ and one can check that $\phi_{n,2}=\overline{\phi_{n,1}}$.

Armed with the properties of the preceding sections and the above mentioned properties of the eigenpairs $(\lambda_n,\phi_{n,1})$, $(\lambda_n,\overline{\phi_{n,1}})$, $n\in\mathbb N$, with $\phi_{n,1}$ given by \eqref{eigen}, we are in position to complete the proof of Theorem \ref{inv_prob}.

\begin{proof}[\bf{Proof of Theorem \ref{inv_prob}}] We divide the proof of the theorem into four steps.\\
    \textit{Step 1 : Analytic extension}. 
    In view of Proposition \ref{ana}, for $j,\ell=1,2$, we can define on $(0,+\infty)$ the map $h_j^\ell$ by
    $$h_j^\ell(t) = \sigma(t)\partial_{\nu}G_j(x_\ell)-\sum_{n=1}^{+\infty}\sum_{k=1}^{d_n}\frac{\scalarprod{\mu_j,\phi_{n,k}}}{\lambda_n}\partial_{\nu}\phi_{n,k}(x_\ell)\int_0^te^{-\lambda_n(t-s)}\sigma'(s)ds,\quad t\in(0,+\infty),$$
    where $\sigma$ denotes the extension   to $(0,+\infty)$ of the map $\sigma$ satisfying $\sigma= \sigma(T)$ on $[T,+\infty)$. In this step, we will prove the following identity
  \begin{equation}\label{t1a}h_1^\ell(t)=h_2^\ell(t),\quad t\in(0,+\infty).\end{equation}
Using the fact that $\sigma=\sigma(T)$ on $(T_1,T)$, we get
    $$h_j^\ell(t) =\sigma(T)\partial_\nu G_j(x_\ell) -\sum_{n=1}^{+\infty}\sum_{k=1}^{d_n}\frac{\scalarprod{\mu_j,\phi_{n,k}}}{\lambda_n}\partial_{\nu}\phi_{n,k}(x_\ell)\int_0^{T_1}e^{-\lambda_n(t-s)}\sigma'(s)ds,\quad t\in(T_1,+\infty),$$
    and applying again Proposition \ref{ana}, we deduce that $h_j^\ell$ is analytic on $(T_1,+\infty)$. In addition, condition \eqref{Hypothesis_normalderivative} and Proposition \ref{bound} imply that
$h_1^\ell(t)=\partial_\nu u_1(t,x_\ell)=\partial_\nu u_2(t,x_\ell)=h_2^\ell(t)$,  $\ell=1,2,\ t\in(0,T)$.
Combining this with the fact that $h_j^\ell$, $j,\ell=1,2$, is analytic on $(T_1,+\infty)$ and applying unique continuation for analytic functions, we obtain \eqref{t1a}.\\

\textit{Step 2 : Laplace transform of }$h_j^\ell$ $j,\ell=1,2$. This step will be devoted to the proof of the following identity
 \begin{equation}\label{t1b}\hat{\sigma}(p)\left(\partial_{\nu}G(x_\ell) - \sum_{n=1}^{+\infty}p\frac{\sum\limits_{k=1}^{d_n} \scalarprod{\mu,\phi_{n,k}}\partial_{\nu}\phi_{n,k}(x_\ell)}{\lambda_m(p+\lambda_m)}\right) = 0,\quad p\in\C_+:=\{z\in\mathbb C:\ \textrm{Re}(z)>0\},\end{equation}
where $G=G_1-G_2$ and $\mu=\mu_1-\mu_2$.
For this purpose, we consider properties of Laplace transform of the map $h^\ell =h_1^\ell-h_2^\ell$ and $g^\ell=h^\ell-\sigma G$. Fix $\epsilon\in(0,\frac14)$. Using the continuous embeddings $X^{2+2\varepsilon}\hookrightarrow{C}^1(\overline{\Omega})$, applying Cauchy-Schwarz inequality, Fubini's theorem, Young's inequality for convolution products and condition \eqref{mureg}, for all $\tau>0$, we get
$$\begin{aligned}\int_0^{+\infty}e^{-\tau t}|g^\ell(t)|dt&\leq \frac{1}{\sqrt{\tau}}\left(\int_0^{+\infty}e^{-\tau t}|g^\ell(t)|^2dt\right)^{\frac12}\\
&\leq \frac{1}{\sqrt{\tau}} \left(\int_0^{+\infty}e^{-\tau t}\norm{\sum_{n=1}^{+\infty}\sum_{k=1}^{d_n}\frac{\scalarprod{\mu,\phi_{n,k}}}{\lambda_n}\phi_{n,k}\int_0^te^{-\lambda_n(t-s)}\sigma'(s)ds}_{C^1(\overline{\Omega})}^2\right)^{\frac12}\\
&\leq C\left(\int_0^{+\infty}e^{-\tau t}\norm{\sum_{n=1}^{+\infty}\sum_{k=1}^{d_n}\frac{\scalarprod{\mu,\phi_{n,k}}}{\lambda_n}\phi_{n,k}\int_0^te^{-\lambda_n(t-s)}\sigma'(s)ds}_{X^{2+2\epsilon}}^2\right)^{\frac12}\\
&\leq C\left(\sum_{n=1}^{+\infty}\sum_{k=1}^{d_n}|\scalarprod{\mu,\phi_{n,k}}|^2\lambda_n^{2\epsilon}\underbrace{\left(\int_0^{+\infty}e^{-\tau t}\int_0^te^{-\lambda_n(t-s)}|\sigma'(s)|dsdt\right)^2}_{\leq \norm{\sigma'}_{L^1(0,T)}^2(\tau+\lambda_n)^{-2}}\right)^{\frac12}\\
&\leq C\left(\sum_{n=1}^{+\infty}\sum_{k=1}^{d_n}|\lambda_n^{2(\epsilon-1)}\scalarprod{\mu,\phi_{n,k}}|^2\right)^{\frac12}=C\norm{\mu}_{X^{-2(1-\epsilon)}}<\infty,
\end{aligned}$$
where $C>0$ is a constant changing from line to line. Therefore, for all $p\in\mathbb C_+$, $\hat{h}^\ell(p)$ is well-defined and, repeating the above argumentation combined with Lebesgue dominate convergence theorem, one can easily check that 
$$\begin{aligned}\hat{h}^\ell(p)&=\hat{\sigma}(p)\partial_{\nu}G(x_\ell)-\sum_{n=1}^{+\infty}\sum_{k=1}^{d_n}\frac{\scalarprod{\mu_j,\phi_{n,k}}}{\lambda_n}\partial_{\nu}\phi_{n,k}(x_\ell)\left(\int_0^{+\infty}e^{-pt}\int_0^te^{-\lambda_n(t-s)}\sigma'(s)dsdt\right)\\
&=\hat{\sigma}(p)\partial_{\nu}G(x_\ell)-\underbrace{\widehat{\sigma'}(p)}_{p\widehat{\sigma}(p)}\sum_{n=1}^{+\infty}\sum_{k=1}^{d_n}\frac{\scalarprod{\mu_j,\phi_{n,k}}}{\lambda_n(\lambda_n+p)}\partial_{\nu}\phi_{n,k}(x_\ell).\end{aligned}$$
On the other hand, condition \eqref{t1a} implies that $h^\ell=h_1^\ell-h_2^\ell\equiv0$ and, using the above identity we easily deduce \eqref{t1b}.\\

\textit{Step 3 : Fundamental identity. } In this step we will show the following identity
\begin{equation}\label{t1c}\sum\limits_{k=1}^{d_n} \scalarprod{\mu,\phi_{n,k}}\partial_{\nu}\phi_{n,k}(x_\ell)=0,\quad n\in\mathbb N,\ \ell=1,2.\end{equation}
Since $\sigma$ is not uniformly vanishing and $\hat{\sigma}$ is holomorphic in $\mathbb C_+$, by application of the isolated zero theorem, there exists $r_1,r_2\in(0,+\infty)$, $r_1<r_2$, such that $\hat{\sigma}(p)\neq0$, $p\in(r_1,r_2)$. Combining this with \eqref{t1b}, for all $p\in (r_1,r_2)$,  we obtain
\begin{equation}\label{t1d}\partial_{\nu}G(x_\ell) - \sum_{n=1}^{+\infty}p\frac{\sum\limits_{k=1}^{d_n} \scalarprod{\mu,\phi_{n,k}}\partial_{\nu}\phi_{n,k}(x_\ell)}{\lambda_m(p+\lambda_m)}=0,\quad \ell=1,2.\end{equation}
Meanwhile, using \eqref{mureg}, one can easily check that the map 
$$p\mapsto\sum_{n=1}^{+\infty}p\frac{\sum\limits_{k=1}^{d_n} \scalarprod{\mu,\phi_{n,k}}\partial_{\nu}\phi_{n,k}(x_\ell)}{\lambda_m(p+\lambda_m)} $$
is holomorphic in $\mathcal O=\mathbb C\setminus\{-\lambda_n:\ n\in\mathbb N\}$. Then, by isolated zero theorem, we deduce that \eqref{t1d} holds true for $p\in\mathcal O$. Finally, fixing $n\in\mathbb N$, multiplying \eqref{t1d} by $p+\lambda_n$ and sending $p\to-\lambda_n$, we obtain \eqref{t1c}.

\textit{Step 4 : Completion of the proof. }
   In this step we complete the proof of the theorem. For this purpose, we will discuss the consequence of \eqref{t1c} depending on the value of the index $n$. Observe that the argument of this step fundamentally rely on the representation \eqref{eigen}
    of the eigenfunctions $\{\phi_{n,k}:\ n\in\mathbb N,\ k=1,\ldots,d_n\}$.
    As observed at the beginning of this section, we have $d_1=1$, $m(1)=0$ and, for $n\geq 2$, we get $d_n=2$ and $m(n)\geq1$. Using classical properties of Bessel functions (see e.g. \cite[Lemma 2.6]{Li2020fractionalorder_sourceterm}), for $\ell=1,2$, we find
    \begin{equation}\label{t1aa}
        \partial_{\nu}\phi_{n,1}(x_\ell)=\sqrt{\lambda_n}\omega_nJ_{m(n)}'(\sqrt{\lambda_n})e^{im(n)\theta}=-\sqrt{\lambda_n}\omega_nJ_{m(n)+1}(\sqrt{\lambda_n})e^{im(n)\theta_\ell}=-\sqrt{\frac{\lambda_n}{\pi}}e^{im(n)\theta_\ell}\neq0.\end{equation}
For $n=1$, since $d_1=1$, \eqref{t1c} can be rewritten as $\scalarprod{\mu,\phi_{1,1}}\partial_{\nu}\phi_{1,1}(x_\ell) = 0$ and \eqref{t1aa} implies that $\scalarprod{\mu,\phi_{1,1}} = 0$.

Now let us choose $n\geq2$ and recall that $d_n=2$ and $\phi_{n,2}=\overline{\phi_{n,1}}$. Then, \eqref{t1c} and \eqref{t1aa} imply that, for all $\ell=1,2$, we get
        \begin{align*}
            \scalarprod{\mu,\phi_{n,1}}\partial_{\nu}\phi_{n,1}(x_\ell) + \scalarprod{\mu,\overline{\phi_{n,1}}}\partial_{\nu}\overline{\phi_{n,1}}(x_\ell) = 0\iff -\sqrt{\frac{\lambda_n}{\pi}}\left(\scalarprod{\mu,\phi_{n,1}}e^{im(n)\theta_\ell}+\scalarprod{\mu,\overline{\phi_{n,1}}}e^{-im(n)\theta_\ell}\right) = 0.
        \end{align*} These equations can be equivalently reformulated as the following system:
        \begin{align*}
            \left(
            \begin{array}{cc}
                e^{im(n)\theta_1} & e^{-im(n)\theta_1}\\
                e^{im(n)\theta_2} & e^{-im(n)\theta_2}
            \end{array}
            \right)\left(
            \begin{array}{c}
                \scalarprod{\mu,\phi_{n,1}}\\
                \scalarprod{\mu,\overline{\phi_{n,1}}}
            \end{array}
            \right) = \left(
            \begin{array}{c}
                0\\
                0
            \end{array}
            \right).
        \end{align*}
        Recalling that $\theta_1 - \theta_2\notin\pi\Q$ and $m(n)\neq 0$, we obtain
        \begin{align*}
            e^{im(n)(\theta_1-\theta_2)} - e^{-im(n)(\theta_1-\theta_2)} = 2i\sin(m(n)(\theta_1-\theta_2))\neq 0.
        \end{align*} It follows that $\scalarprod{\mu,\phi_{n,1}} = \scalarprod{\mu,\overline{\phi_{n,1}}} = 0$.
        
From the above discussion, we deduce that
$\scalarprod{\mu,\phi_{n,k}}=0$, $n\in\mathbb N$, $k=1,\ldots,d_n$.
Thus, applying Lemma \ref{nullmeasure} in the Appendix, we deduce that $\mu$ is a null measure, or equivalently that $\mu_1=\mu_2$. This completes the proof of the theorem.
\end{proof}\bigskip

\section{Numerical results and discussions}\label{sec:num}
In this section, we present  the feasibility of recovering the unknown source $\mu(x)$  from two-point boundary measurements. Due to the sparsity of the measurements, reconstructing a general Radon measure $\mu(x)$ is numerically challenging. Thus, we focus on two special cases: 
\begin{enumerate}[label=(\roman*)]
  \item $\mu$ is supported on a finite set of points;
  \item $\mu$ is supported on a curve.
\end{enumerate}
The reconstruction algorithms and corresponding numerical experiments are detailed in the subsequent sections. Throughout, we fix terminal time $T=1$. The direct problems are solved by  the Galerkin finite element method (with conforming piecewise linear elements) in space and backward Euler scheme in time \cite{Thomee:2006}. We  employ a finer space-time mesh (with a mesh size $  \frac{1}{50}$ and time steps $\frac{1}{600}$) to generate the exact flux $\partial_{\nu}u$, and a coarse mesh (with a mesh size $\frac{1}{25}$ and time steps $\frac{1}{300}$) for the reconstruction.  We generate the noisy measurement $z^\delta=(z^\delta_1,z^\delta_2)\in (L^2(0,T))^2$ by
\begin{equation}\label{eqn:noise}
    z^\delta_\ell(t)=\partial_{\nu}u(t,x_\ell)(1+\delta \xi),\quad \ell=1,2,
\end{equation}
where $\delta>0$ denotes the relative noise level, $\xi$ follows the standard Gaussian noise and $\partial_{\nu}u$. 

\subsection{Reconstruction of point sources}\label{subsec:num_pt}
In this part, we consider the reconstruction of point sources. In particular, we assume that $\mu$ is a sum of Dirac measures:
\begin{equation*}
\mu(x)=\sum_{i=1}^N \delta_{p_i}(x),\qquad p_i\in\omega\subset\subset\Omega.
\end{equation*}
Therefore, recovering the measure $\mu$ is equivalent to identifying the locations of $p_i$. We employ the regularized Gauss-Newton method \cite[Chapter 10]{Nocedal:2006}. Specifically, we define a nonlinear operator $F:p=(p_1,\dots,p_N) \in \mathbb{R}^{2N}\to (\partial_\nu u(p)(t,x_1),\partial_\nu u(p)(t,x_2))\in (L^2(0,T))^2 $, where $u(p)$ solves problem \eqref{eqmu} with the parameter $p$. For an initial guess $p^0$, we implement the following iteration: 
\begin{equation*}
    p^{k+1}=\arg\min J_k(p),
\end{equation*}
with the functional $J_k(p)$ at the $k$th iteration (based on $p^k$) given by
\begin{equation*}
    J_k(p)=\frac{1}{2}\|F(p^k)-z^\delta+\partial_p F(p^k)(p-p^k) \|_{L^2(0,T)}^2+\frac{\lambda^k}{2}|p-p^k|^2,
\end{equation*}
where $\lambda^k$ is regularization parameter, chosen according to the
specific problem,  and $|\cdot|$ denotes the Euclidean norm.  The Jacobian $\partial_p F(p^k)$ is approximated by the central finite difference scheme $\partial_p F(p)\approx (2 \epsilon)^{-1}(F(p+\epsilon)-F(p-\epsilon))$, where $\epsilon$ is a small number, fixed at $\epsilon=1\times10^{-3}$ in all experiments. Since $J_k$ is quadratic, the minimizer satisfies the normal equation 
\begin{equation*}
    (J_p^* J_p+\lambda^k I)\mathrm{d}p^k=J_p^*(z^\delta-F(p^k)),
\end{equation*}
where $J_p=\partial_p F(p^k)$, $\mathrm{d}p^k=p^{k+1}-p^k$ and $J_p^*$ denotes the adjoint operator. 

Now we present numerical results for the point sources identification. The
accuracy of a reconstruction is measured by the relative $\ell^2(\mathbb{R}^{2N})$ error: $e_\mu(p^k)=|p^k-p^\dagger|/|p^\dagger|$, where $p^\dagger $ refers to the exact location of points.  The residual $r_\mu$  of the recovered source is computed as $r_\mu(p^k)=\|F(p^k)-z^\delta\|_{L^2(0,T)}$.

\begin{example}\label{ex:pt}
    Let $\sigma(t)= \sin(\pi t/0.8)^2  \chi_{[0,0.8]}(t)$, where $\chi$ is the characteristic function. The boundary observation points are $x_\ell=e^{i\theta_\ell}$, with $\theta_1=0$, $\theta_2=\sqrt{2}/2$. 
    \begin{enumerate} 
      \item[(i)] $p_1=(0.5,0.5)$;
      \item[(ii)] $p_1=(0.5,0.5)$, $p_2=(-0.5,-0.5)$; 
      \item[(iii)] $p_1=(0.5,0.5)$, $p_2=(-0.5,0.5)$, $p_3=(-0.5,-0.5)$;
      \item[(iv)] $p_1=(0.5,0.5)$, $p_2=(-0.5,0.5)$, $p_3=(-0.4,0.4)$, $p_4=(0.4, -0.4)$.
    \end{enumerate}
\end{example}
In Figure~\ref{Fig:ex_pt_loss}, we show the convergence behaviour  of the Gauss-Newton method. Throughout all reconstructions, the regularization parameter $\lambda$ is kept constant within each example: $\lambda = 1e$-4 for Example~\ref{ex:pt}(i) and (ii), $\lambda = 8e$-2 for Example~\ref{ex:pt}(iii), and $\lambda = 1.5e$-1 for Example~\ref{ex:pt}(iv).  The initial guess $p^0$ is chosen near the true parameter vector $p^\dagger$, with $|p^0 - p^\dagger|/|p^\dagger| \approx 0.1$. For Example~\ref{ex:pt}(i) and (ii), both residual and relative error decay rapidly and then stabilise. The inverse problem becomes increasingly ill-posed as the number of unknown point sources grows. In Example~\ref{ex:pt}(iii) and (iv), the relative error converges slowly or even increase when noise level is large. In Examples~\ref{ex:pt}(iii) and (iv), the relative error converges slowly, or even grows when the noise level is large.  This latter behaviour indicates that a stronger regularization term or an early-stopping strategy should be employed. Figure~\ref{Fig:ex_pt_recon} presents the recovered locations of $p$; detailed quantitative results are also provided in Table~\ref{tab:ex_pt}. The algorithm successfully recovers the locations of $p$ even in the presence of a $10\%$ noise level, demonstrating the robustness of the proposed method.

\begin{table}[hbt!]
  \centering
  \begin{threeparttable}
    \caption{The relative errors for Example~\ref{ex:pt} at different noise levels.\label{tab:ex_pt}}
    \begin{tabular}{c|cccc}
        \toprule
           $e_\mu$ &  Example~\ref{ex:pt}(i) &  Example~\ref{ex:pt}(ii) &  Example~\ref{ex:pt}(iii) &  Example~\ref{ex:pt}(iv) \\
        \midrule
             $\delta=1\%$ & 1.97e-3 & 1.35e-2 & 1.13e-2 & 1.82e-2\\
             $\delta=5\%$ & 6.06e-3 & 1.74e-2 & 1.93e-2 & 2.51e-2 \\
             $\delta=10\%$ & 1.54e-2 & 3.00e-2 & 2.53e-2 & 3.49e-2\\
        \bottomrule
    \end{tabular}
      \end{threeparttable}
\end{table}

\begin{figure}[htbp]
	\centering
	\begin{tabular}{cccc}
		\includegraphics[width=0.22\textwidth]{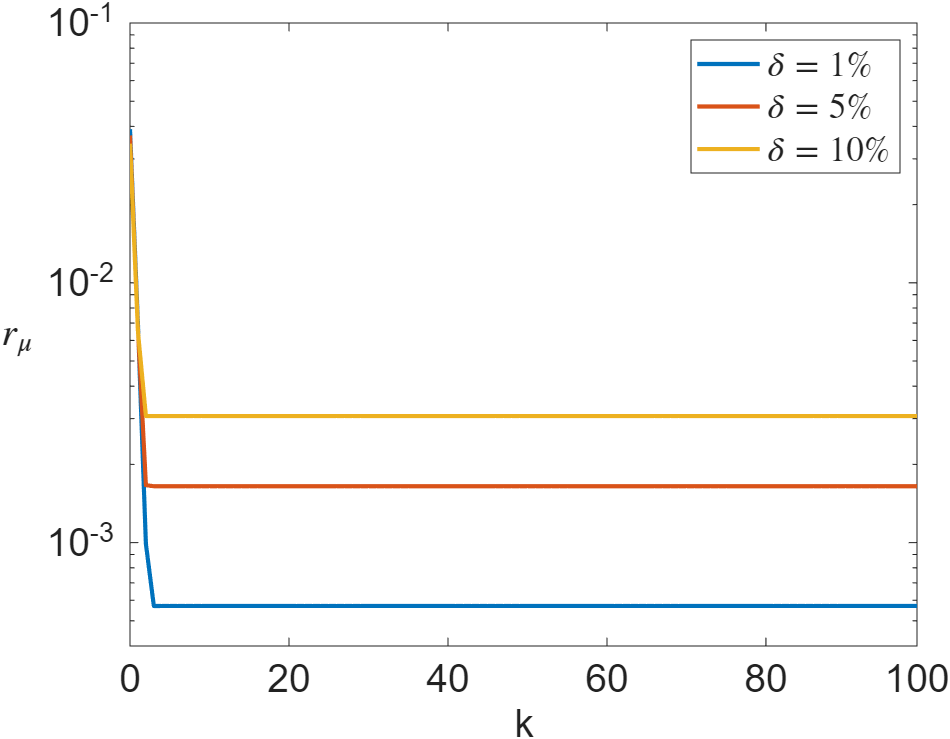}&
		\includegraphics[width=0.22\textwidth]{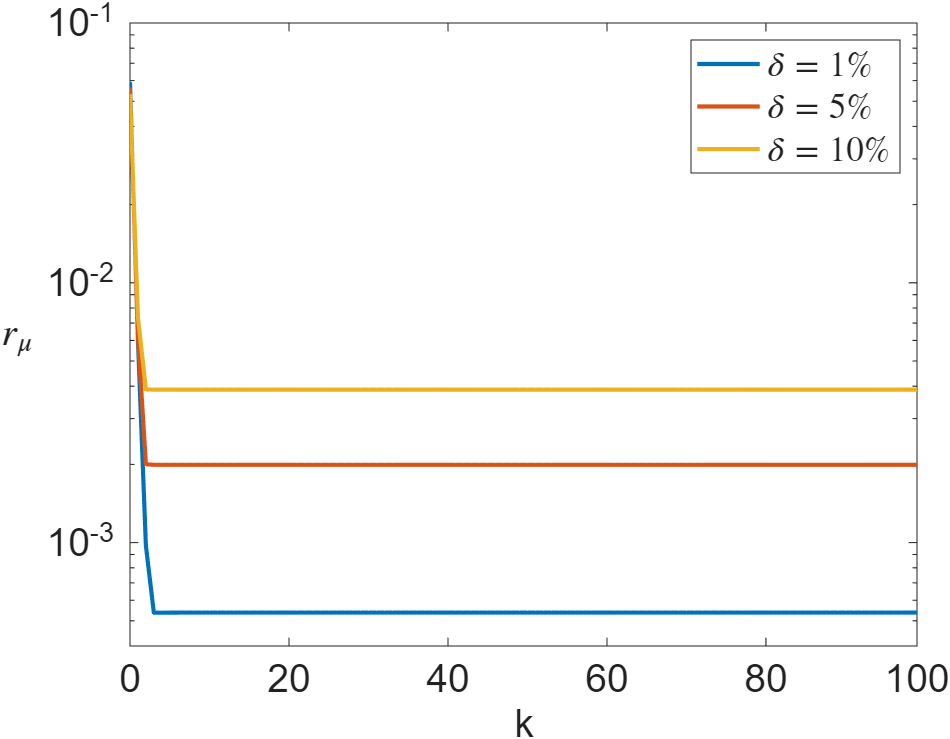}&
		\includegraphics[width=0.22\textwidth]{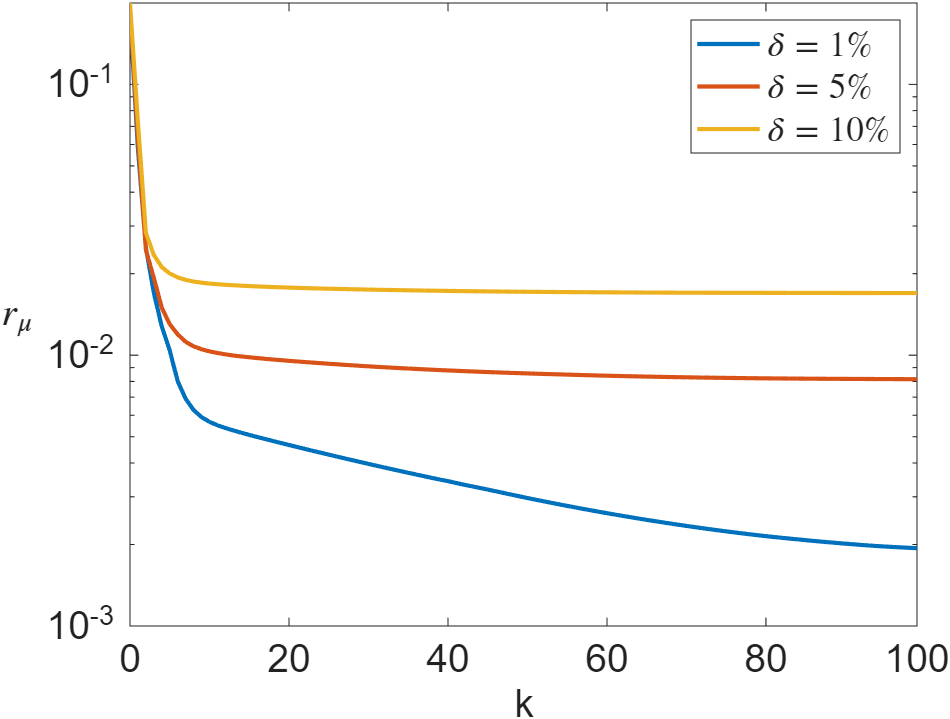}&
		\includegraphics[width=0.22\textwidth]{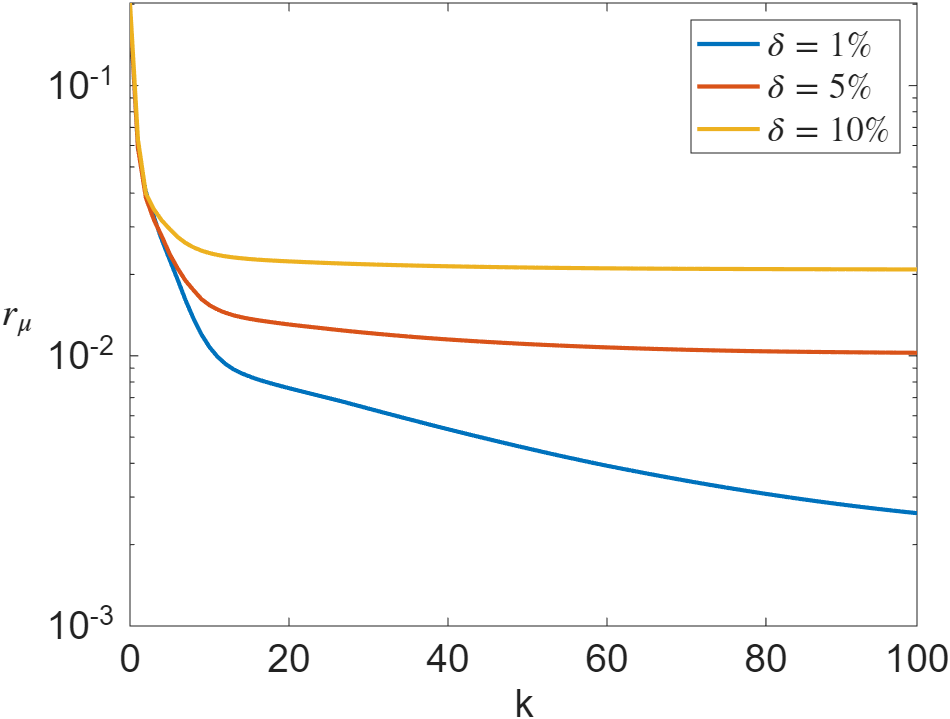}\\
        \includegraphics[width=0.22\textwidth]{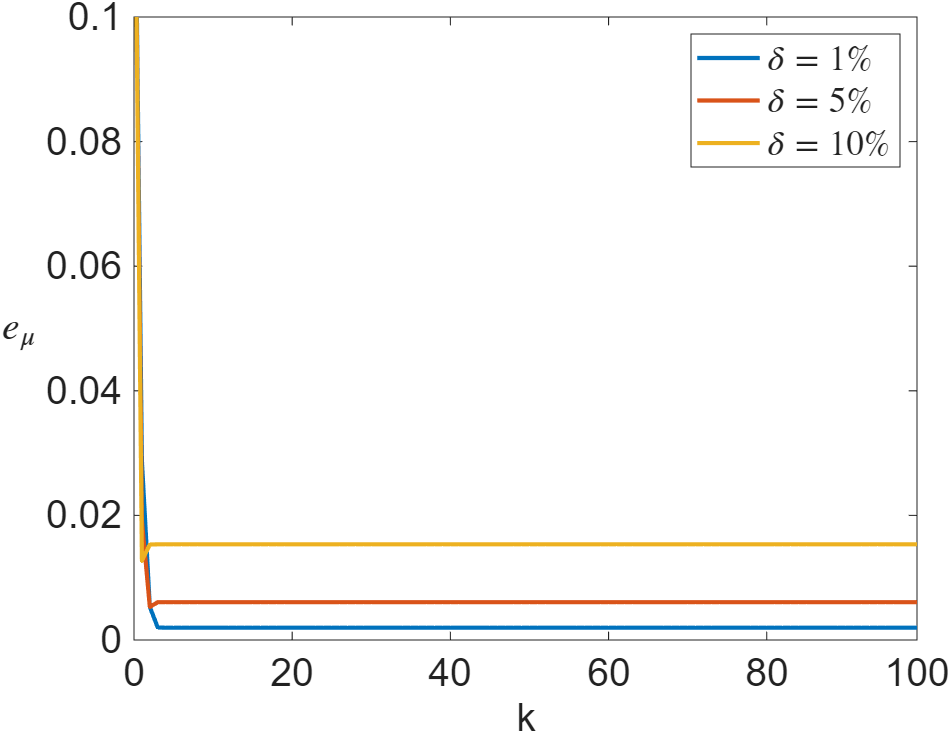}&
		\includegraphics[width=0.22\textwidth]{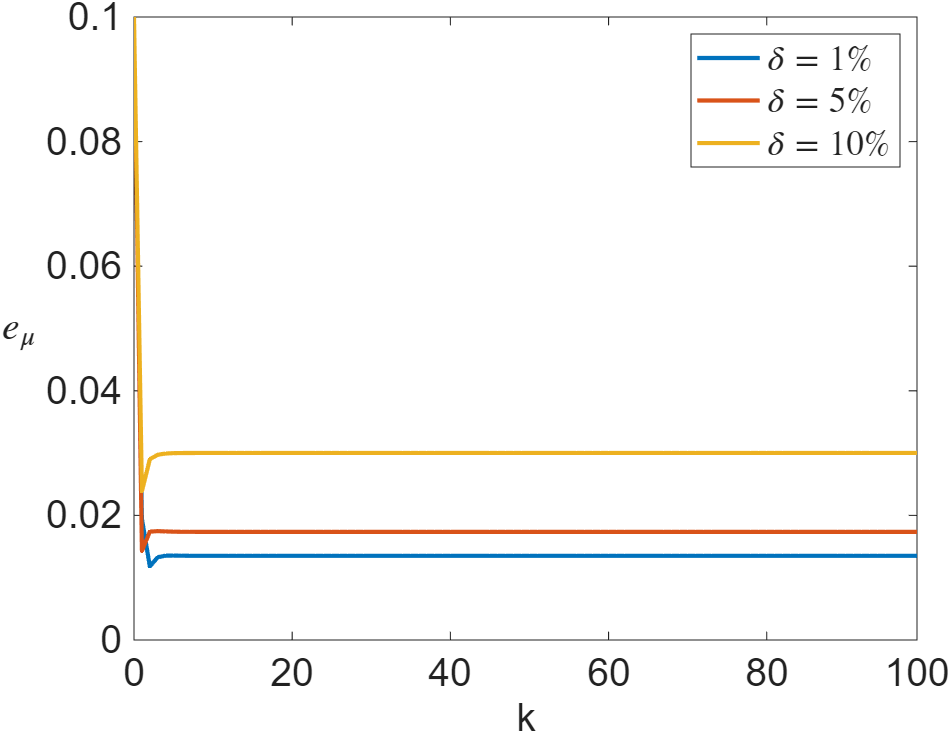}&
		\includegraphics[width=0.22\textwidth]{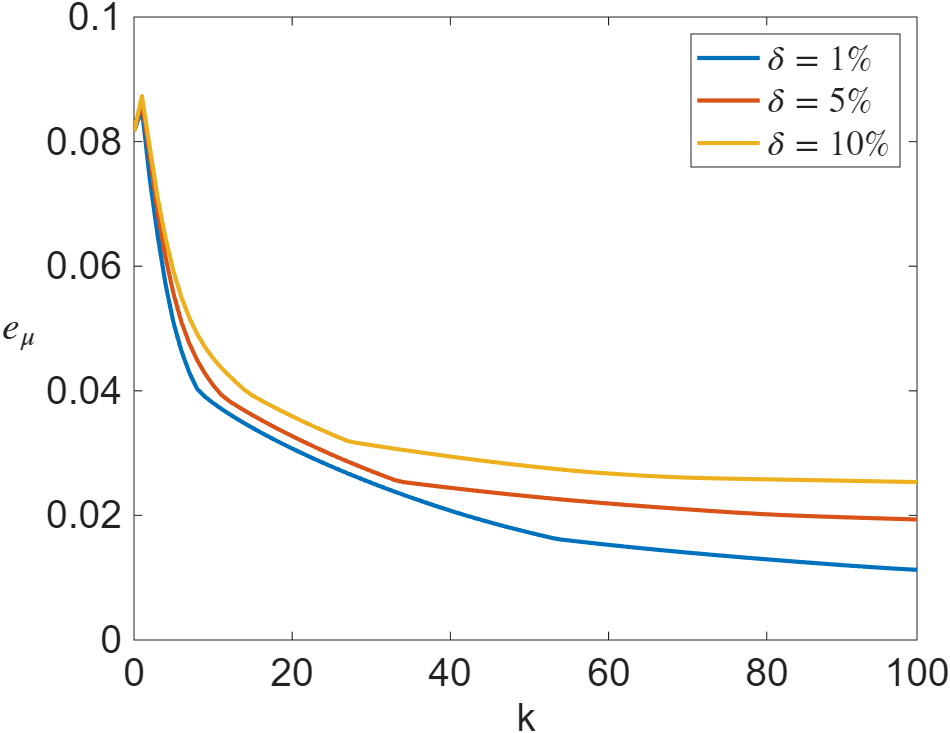}&
		\includegraphics[width=0.22\textwidth]{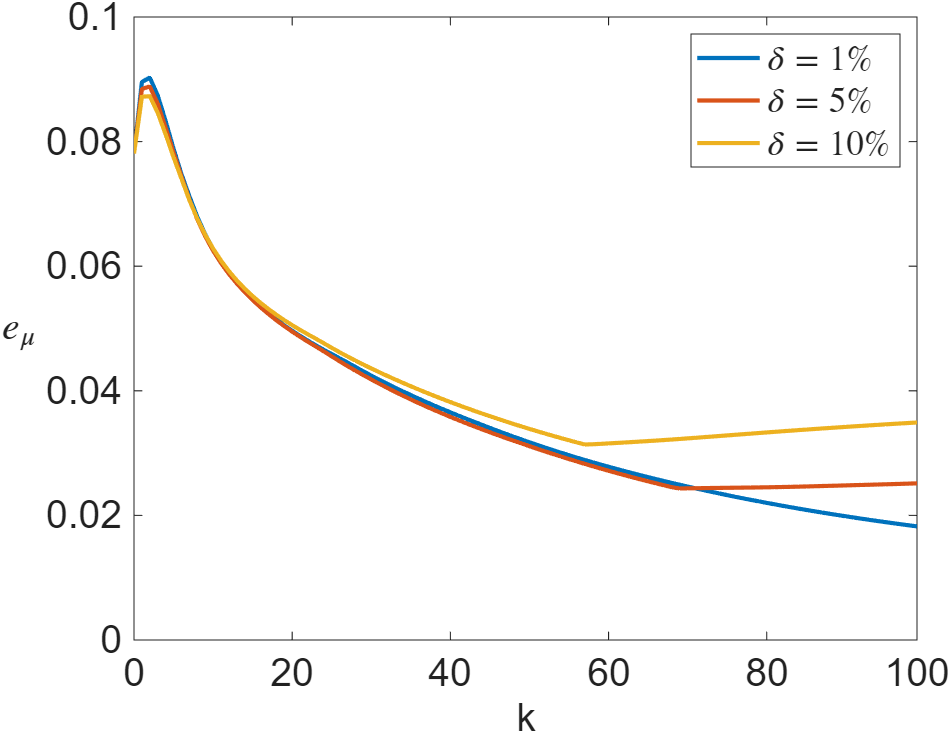}\\
		(a) Example~\ref{ex:pt}(i) & (b) Example~\ref{ex:pt}(ii)  & (c) Example~\ref{ex:pt}(iii) & (d) Example~\ref{ex:pt}(iv) 
	\end{tabular}
	\caption{Evolution of residuals $r_\mu$ and relative errors $e_\mu$ with respect to iteration number $k$ in Example~\ref{ex:pt}.}
	\label{Fig:ex_pt_loss}
\end{figure}

\begin{figure}[htbp]
	\centering
	\begin{tabular}{cccc}
		\includegraphics[width=0.22\textwidth]{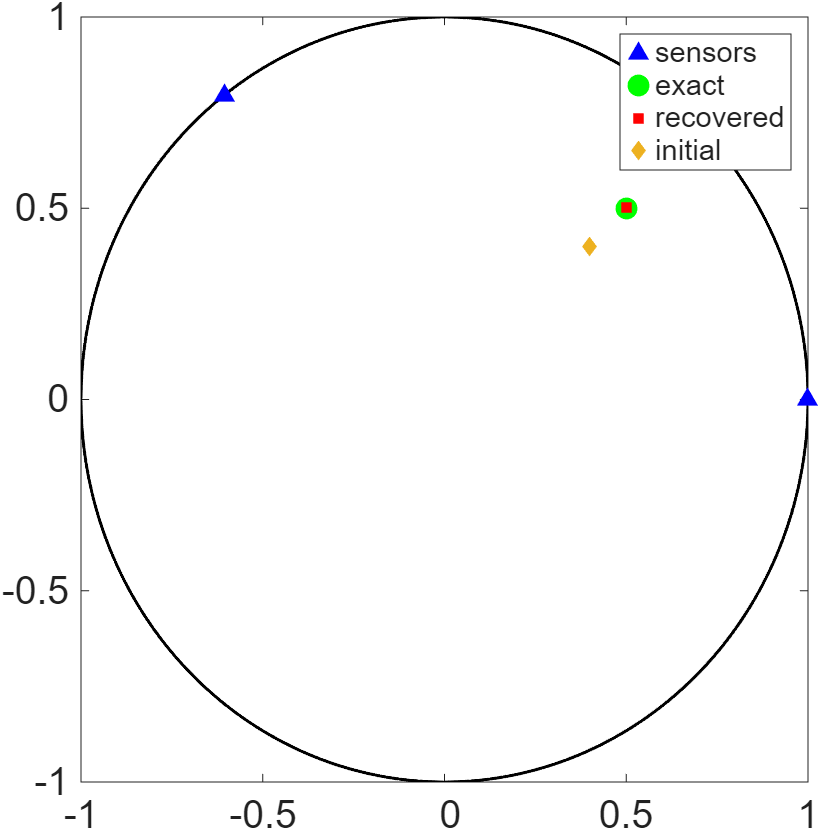}&
		\includegraphics[width=0.22\textwidth]{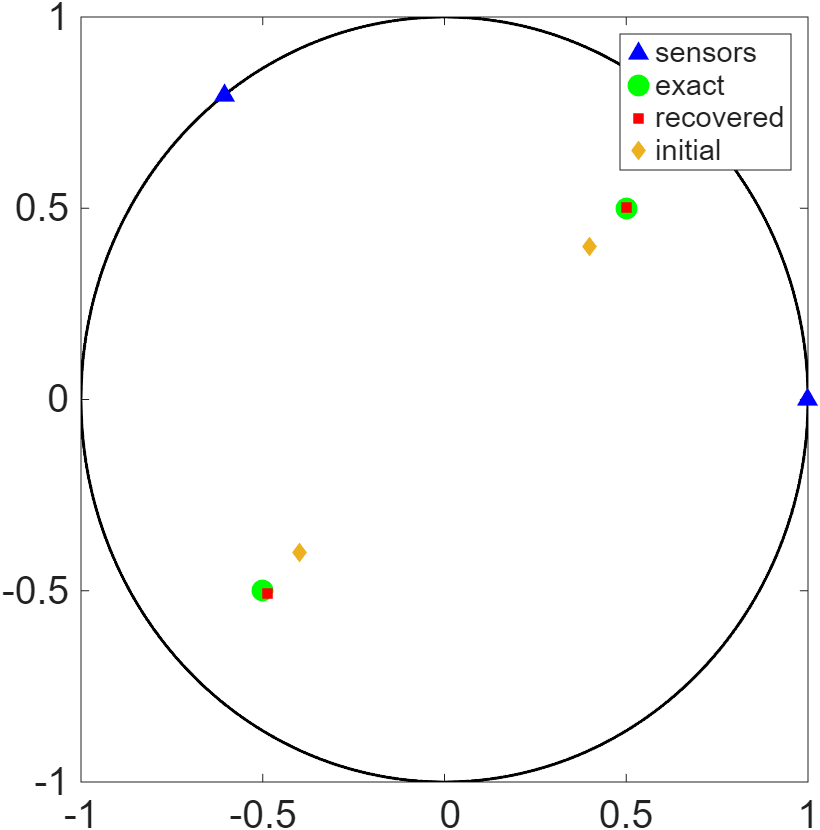}&
		\includegraphics[width=0.22\textwidth]{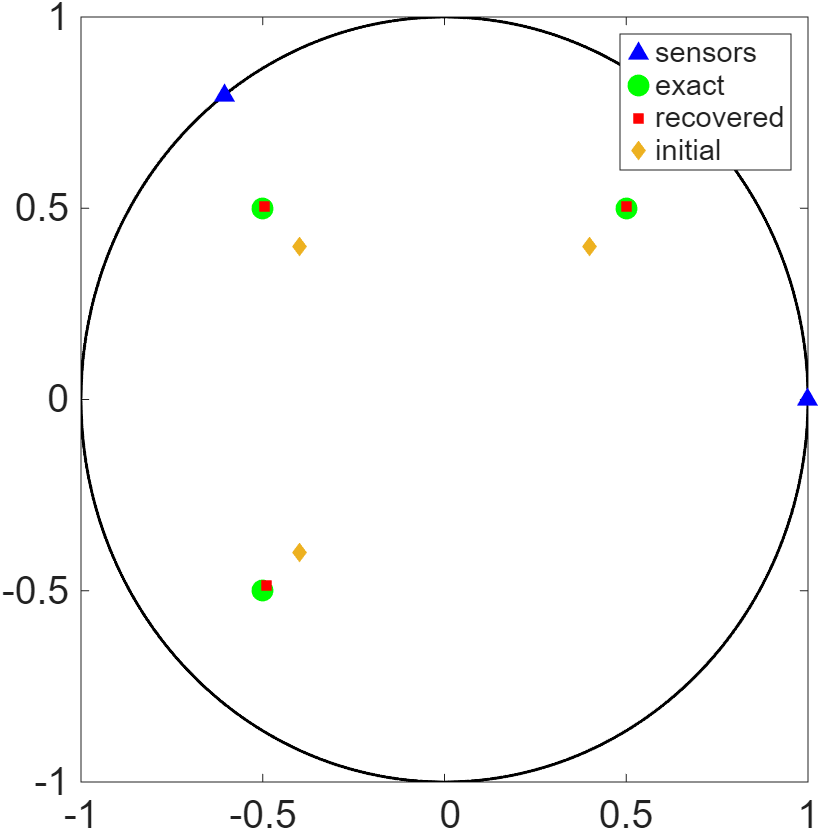}&
		\includegraphics[width=0.22\textwidth]{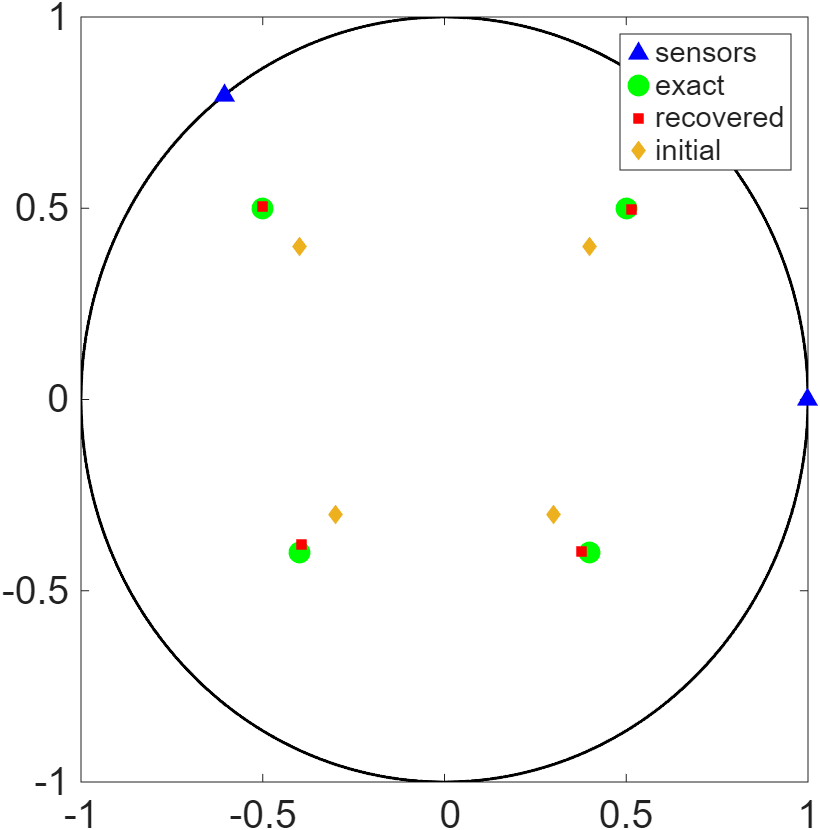}\\
		\includegraphics[width=0.22\textwidth]{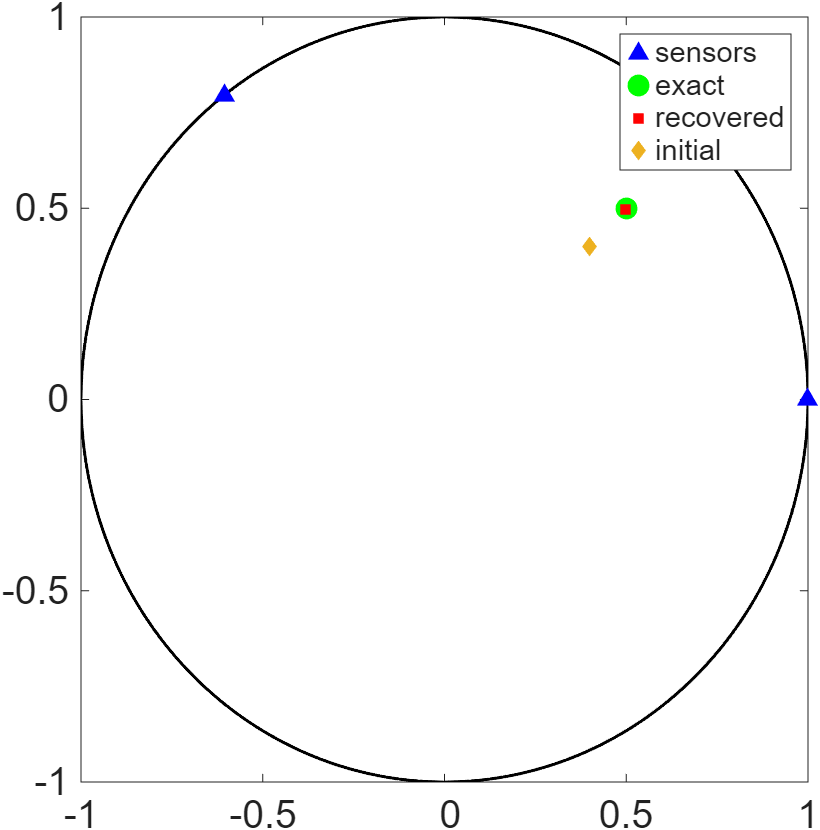}&
		\includegraphics[width=0.22\textwidth]{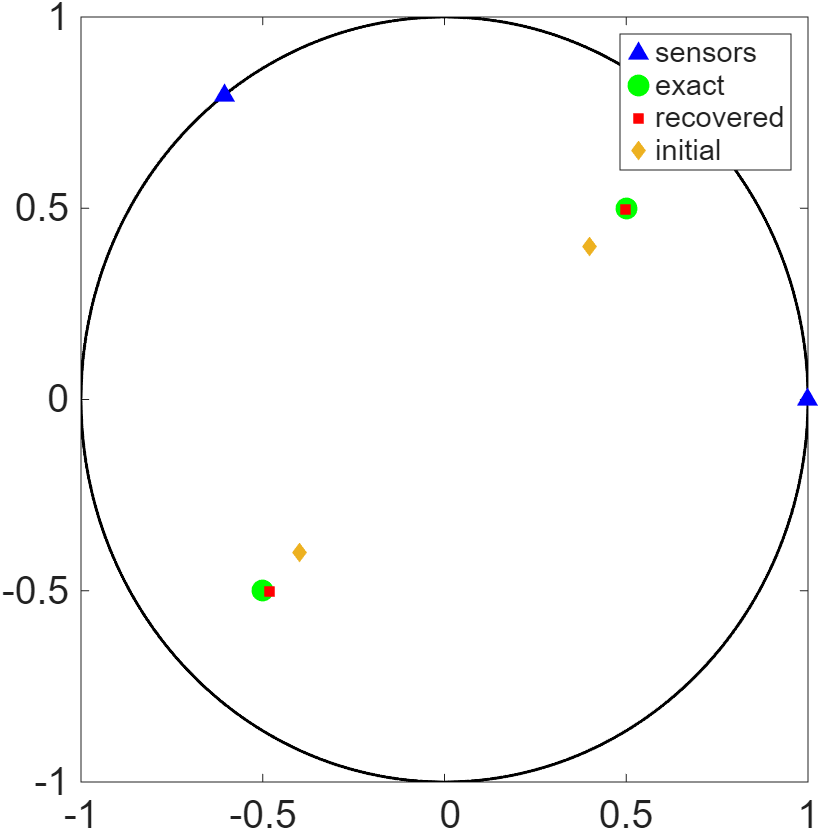}&
		\includegraphics[width=0.22\textwidth]{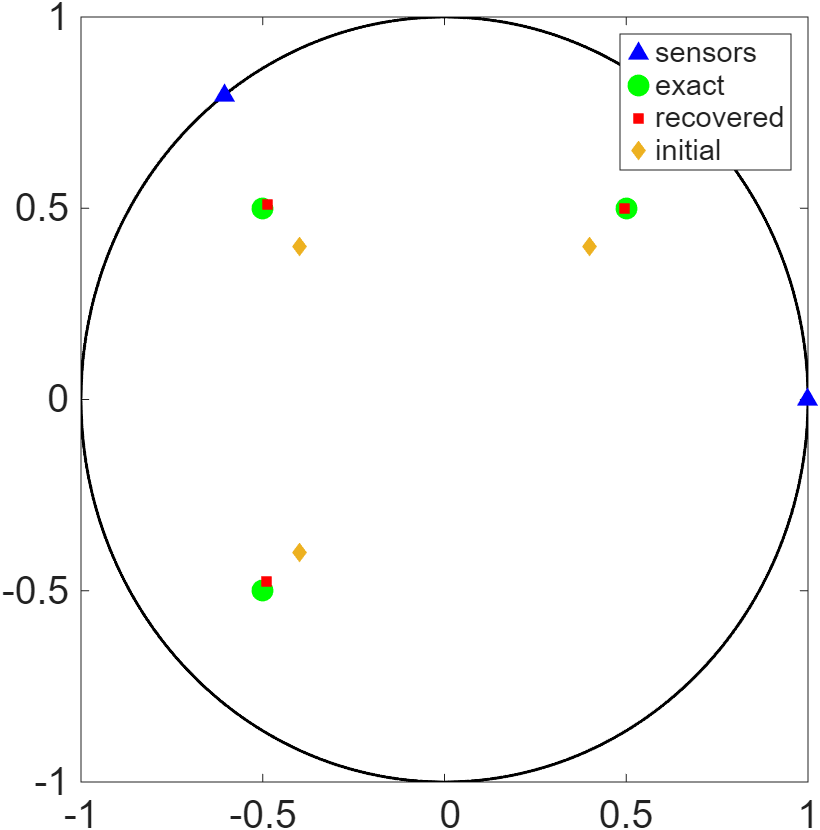}&
		\includegraphics[width=0.22\textwidth]{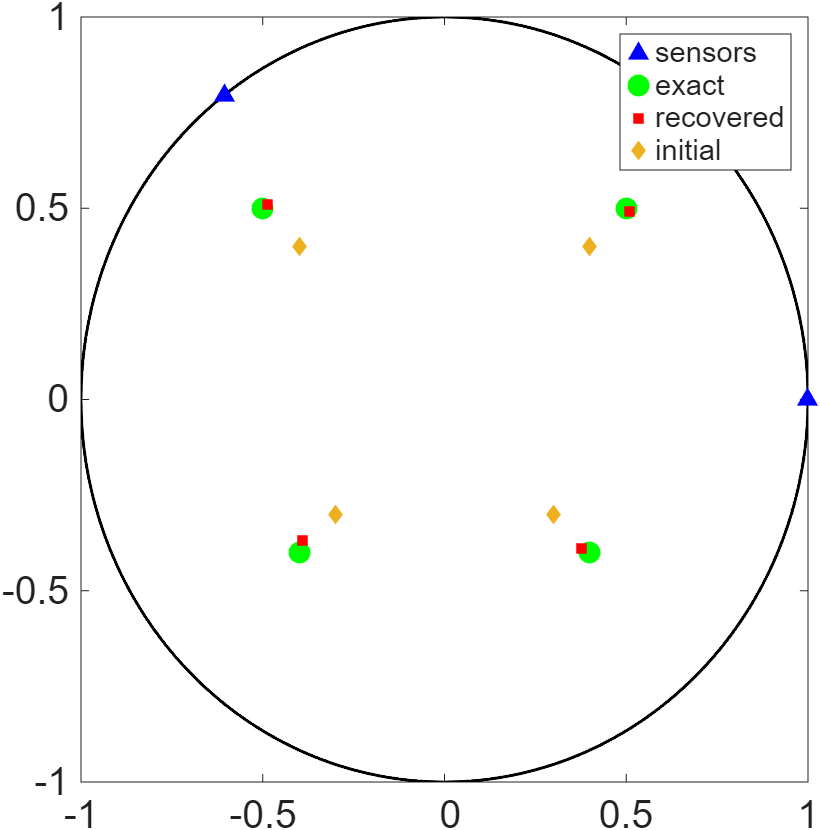}\\
        \includegraphics[width=0.22\textwidth]{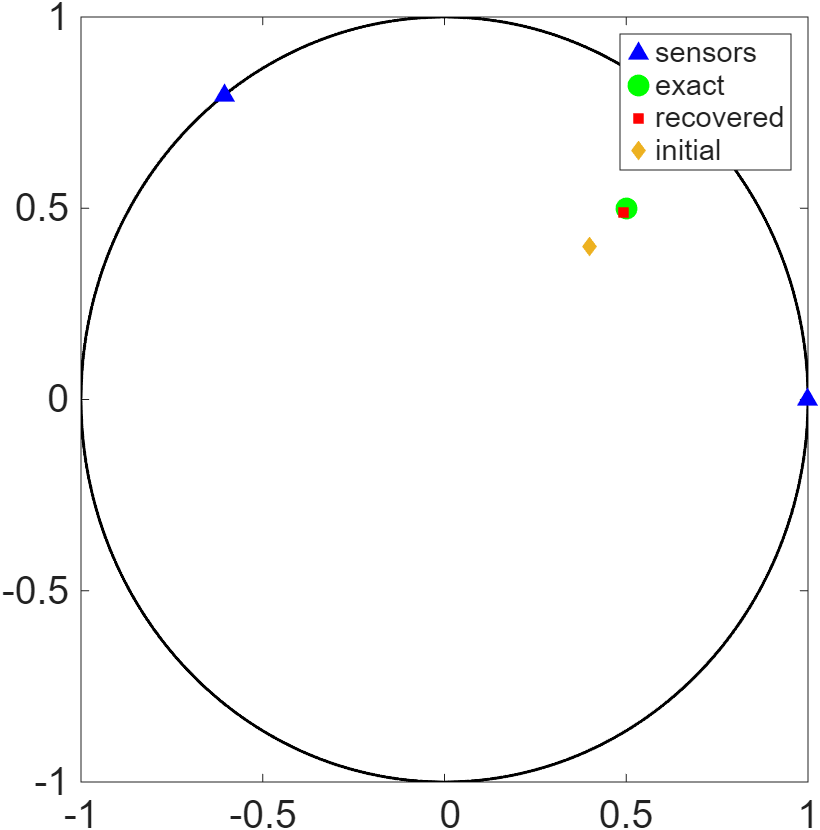}&
		\includegraphics[width=0.22\textwidth]{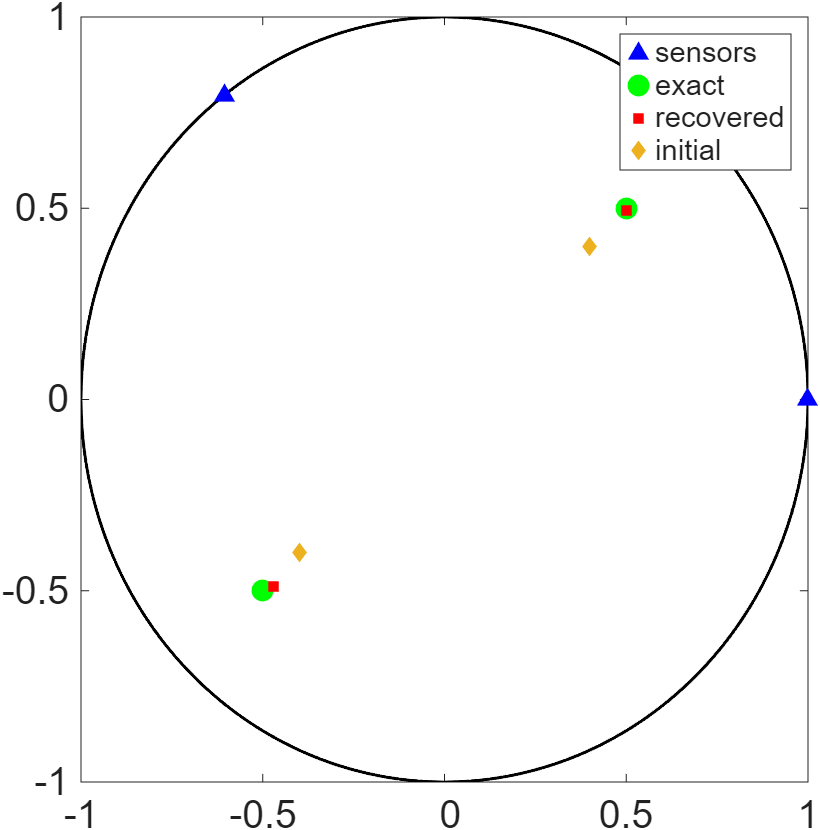}&
		\includegraphics[width=0.22\textwidth]{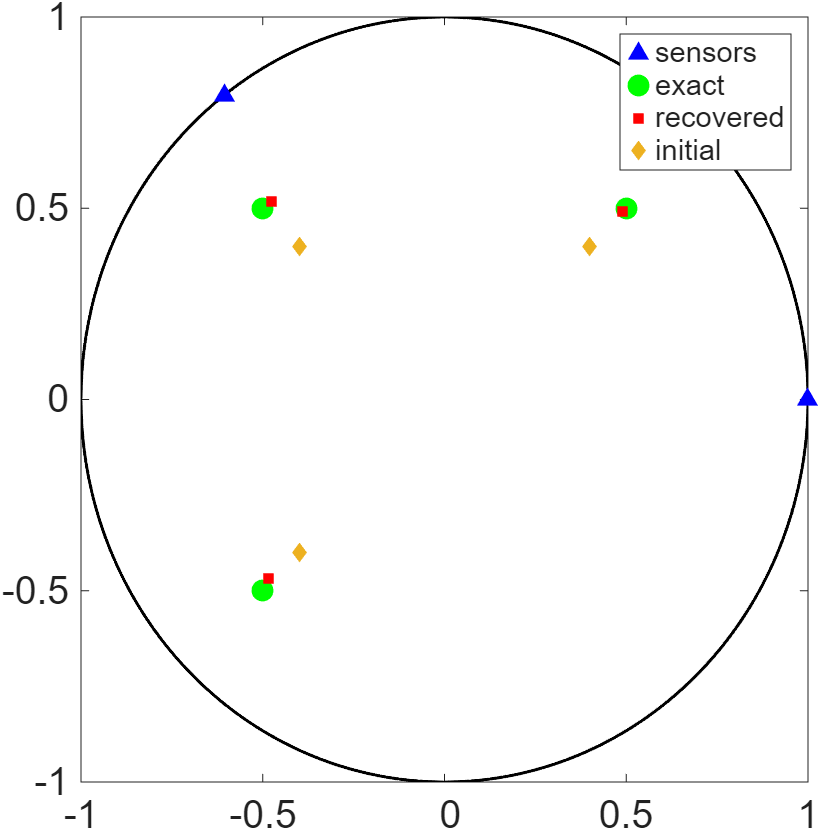}&
		\includegraphics[width=0.22\textwidth]{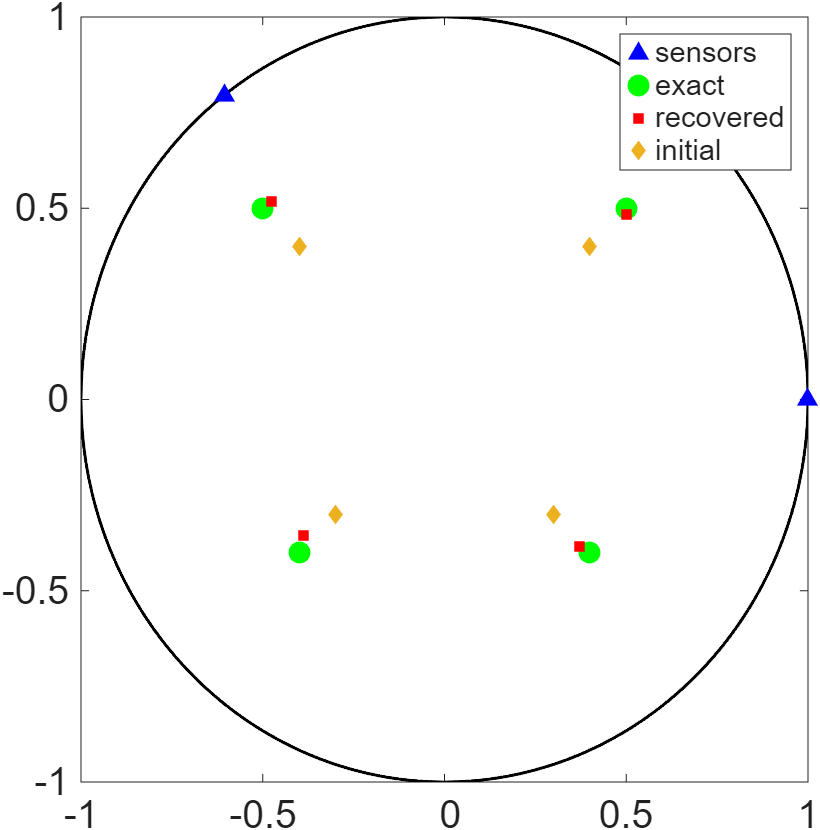}\\
		(a) Example~\ref{ex:pt}(i) & (b) Example~\ref{ex:pt}(ii)  & (c) Example~\ref{ex:pt}(iii) & (d) Example~\ref{ex:pt}(iv) 
	\end{tabular}
	\caption{The reconstruction  for Example~\ref{ex:pt} at different noise levels. From top to bottom: $\delta=1\%$, $\delta=5\%$, $\delta=10\%$.}
	\label{Fig:ex_pt_recon}
\end{figure}

\subsection{Reconstruction of line source}\label{subsec:num_curve}
In this part, we  assume that $\mu(x)=\chi_C(x)$  is a line source supported on a curve $C$, which is parameterized as follows:
\begin{equation*} 
C(\theta)=a_0+\sum_{i=1}^{N} a_i\cos(\omega_i \theta)+b_i\sin(\omega_i\theta),\quad \theta\in[0,2\pi].
\end{equation*}
Here, the frequencies $\{\omega_i\}_{i=1}^N$ are assumed to be known \textit{a priori}, and our objective is to reconstruct the unknown coefficients $\boldsymbol{a}=(a_0,\cdots,a_N)$ and $\boldsymbol{b}=(b_0,\cdots,b_N)$. Since the coefficients corresponding to different frequencies  have different influences on the measurement, we employ the Levenberg-Marquardt method \cite{Levenberg:1944,Marquardt:1963} for the numerical reconstruction. 

We define a nonlinear operator $F: (\boldsymbol{a},\boldsymbol{b}) \in \mathbb{R}^{2N+1}\to (\partial_\nu u(t,x_1),\partial_\nu u(t,x_2))\in (L^2(0,T))^2 $, where $u$ solves problem \eqref{eqmu} corresponding to coefficients  $(\boldsymbol{a},\boldsymbol{b})$. For an initial guess $(\boldsymbol{a}^0,\boldsymbol{b}^0)$, we implement the following iteration:
\begin{equation*}
    (\boldsymbol{a}^{k+1},\boldsymbol{b}^{k+1})=\arg\min J_k(\boldsymbol{a},\boldsymbol{b}), 
\end{equation*} 
with the quadratic functional 
\begin{align*}
    J_k(\boldsymbol{a},\boldsymbol{b})=&\frac{1}{2}\|F(\boldsymbol{a}^{k},\boldsymbol{b}^{k})-z^\delta + \partial_{\boldsymbol{a}} F(\boldsymbol{a}^{k},\boldsymbol{b}^{k})(\boldsymbol{a}-\boldsymbol{a}^k)  + \partial_{\boldsymbol{b}} F(\boldsymbol{a}^{k},\boldsymbol{b}^{k})(\boldsymbol{b}-\boldsymbol{b}^k)  \|_{L^2(0,T)}^2\\
    &+ \frac{\lambda_0^k}{2}|a_0-a_0^k|^2 + \sum_{i=1}^{N}\frac{\lambda_i^k}{2}\left(|a_i-a_i^k|^2+ |b_i-b_i^k|^2\right),
\end{align*}
where $\lambda_0^k,\cdots\lambda_N^k$  are regularization parameters. We employ $N+1$ parameters since $a_0,\cdots,a_N,b_1,\cdots,b_N$ influence the data  differently. The Jacobians $\partial_{\boldsymbol{a}} F $ and $\partial_{\boldsymbol{b}} F $ are approximated by the central finite difference scheme with step size $\epsilon=1\times10^{-3}$. 

Now we present numerical results for the line source identification. The
accuracy of a reconstruction is measured by the relative $\ell^2$ error: $e_\mu(\boldsymbol{a}^k,\boldsymbol{b}^k)=|(\boldsymbol{a}^k,\boldsymbol{b}^k)-(\boldsymbol{a}^\dagger,\boldsymbol{b}^\dagger)|/|(\boldsymbol{a}^\dagger,\boldsymbol{b}^\dagger)|$, where $(\boldsymbol{a}^\dagger,\boldsymbol{b}^\dagger) $ refers to the exact coefficients of curve $C$. The $\ell^2$ norm is equivalent to $L^2$ norm by Fourier series expansion.  The residual $r_\mu$  of the recovered source is computed as $r_\mu(\boldsymbol{a}^k,\boldsymbol{b}^k)=\|F(\boldsymbol{a}^k,\boldsymbol{b}^k)-z^\delta\|_{L^2(0,T)}$.

\begin{example}\label{ex:curve}
    Let $\sigma(t)= \sin(\pi t/0.8)^2  \chi_{[0,0.8]}(t)$, where $\chi$ is the characteristic function. The boundary observation points are $x_\ell=e^{i\theta_\ell}$, with $\theta_1=0$, $\theta_2=\sqrt{2}/2$. 
    \begin{enumerate} 
      \item[(i)]  $C(\theta)=0.5+0.1\cos(2\pi \theta)+0.1\sin( 2\pi \theta) $;
      \item[(ii)] $C(\theta)=0.5+0.1\cos(5\pi \theta)+0.1\sin( 5\pi \theta) $;
      \item[(iii)]  $C(\theta)=0.4+0.1\cos(2\pi \theta)+0.1\sin( 2\pi \theta) +0.1\cos(5\pi \theta)+0.1\sin( 5\pi \theta) $.
    \end{enumerate}
\end{example}

In Figure~\ref{Fig:ex_curve_recon}, we show the convergence behaviour  of the Levenberg-Marquardt method. The regularization parameters $\lambda_i^k$ are chosen  decreasing geometrically: $\lambda_0^k =0.0001\times 0.9^k $, $\lambda_1^k =0.1\times 0.9^k $  for Example~\ref{ex:curve}(i), $\lambda_0^k =0.0001\times 0.9^k$, $\lambda_1^k =0.5\times 0.9^k $ for Example~\ref{ex:curve}(ii), and $\lambda_0^k =0.0001\times 0.9^k$, $\lambda_1^k =1\times 0.9^k $, $\lambda_2^k =0.1\times 0.9^k $ for Example~\ref{ex:curve}(iii).  

\begin{table}[hbt!]
  \centering
  \begin{threeparttable}
    \caption{The relative errors for Example~\ref{ex:curve} at different noise levels.\label{tab:ex_curve}}
    \begin{tabular}{c|cccc}
        \toprule
           $e_\mu$ &  Example~\ref{ex:curve}(i) &  Example~\ref{ex:curve}(ii) &  Example~\ref{ex:curve}(iii)   \\
        \midrule
             $\delta=1\%$ & 4.01e-2 & 7.21e-2 & 6.31e-2  \\
             $\delta=5\%$ & 4.71e-2 & 8.72e-2 & 7.77e-2   \\ 
        \bottomrule
    \end{tabular}
      \end{threeparttable}
\end{table}

\begin{figure}[htbp]
	\centering
	\begin{tabular}{ccc}
		\includegraphics[width=0.25\textwidth]{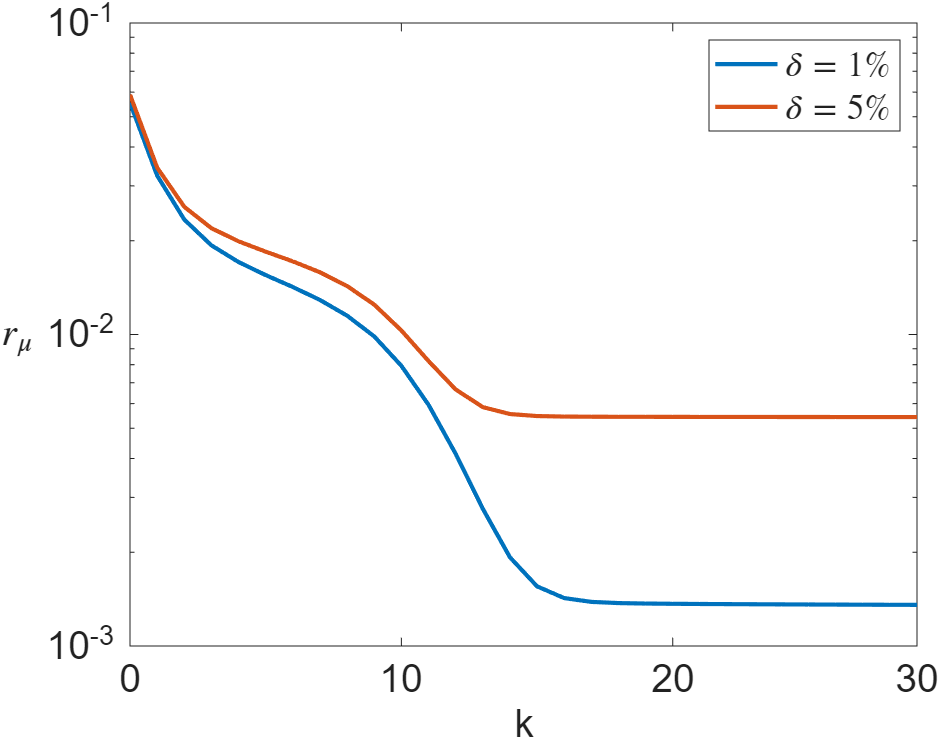}&
		\includegraphics[width=0.25\textwidth]{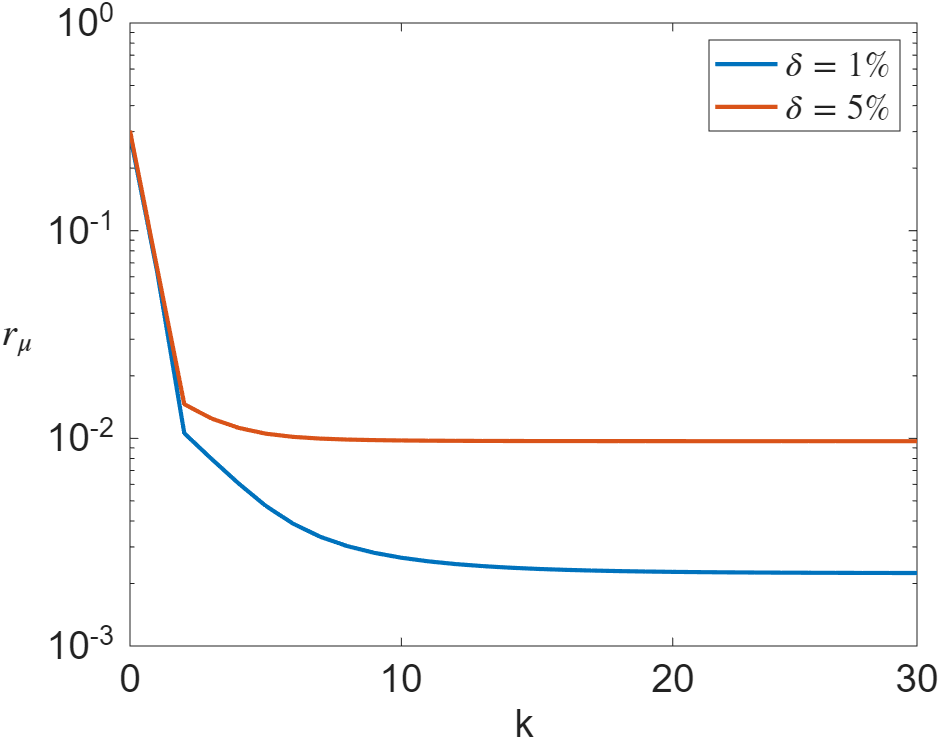}&
		\includegraphics[width=0.25\textwidth]{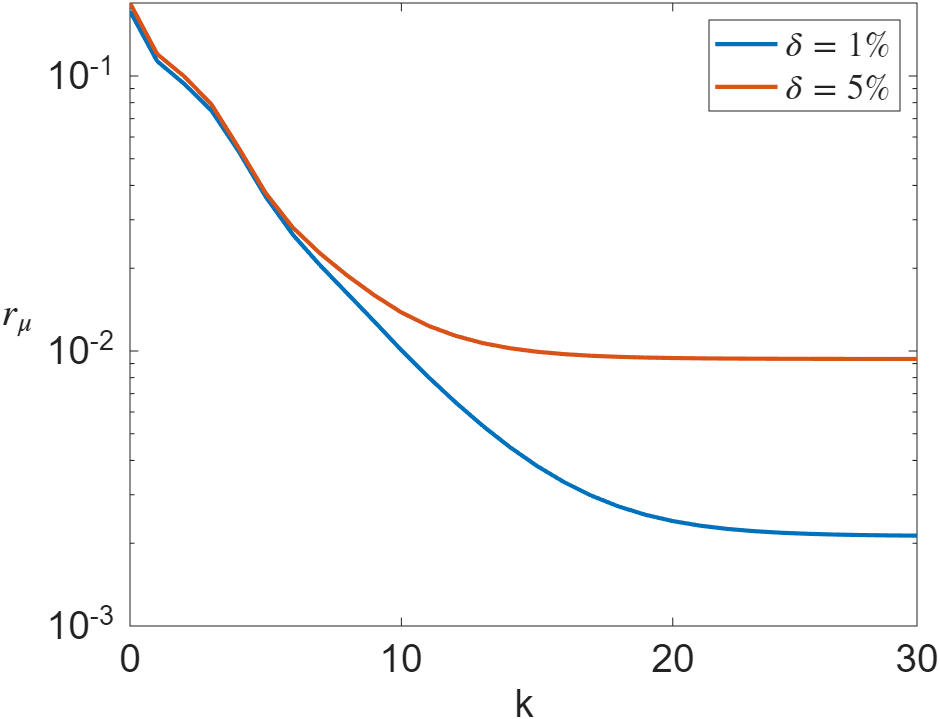} \\
        \includegraphics[width=0.25\textwidth]{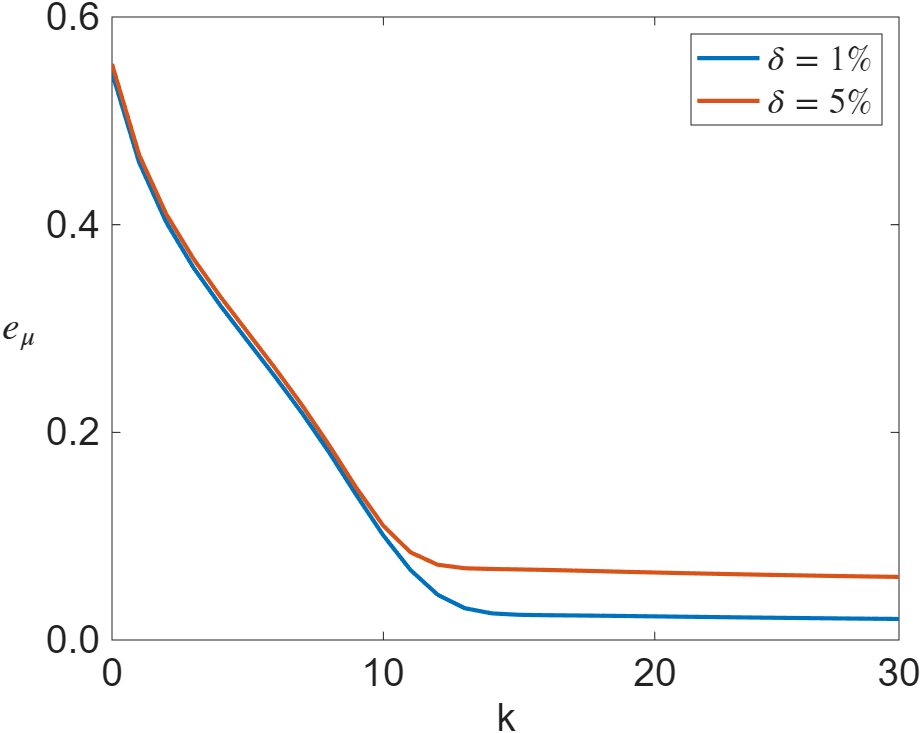}&
		\includegraphics[width=0.25\textwidth]{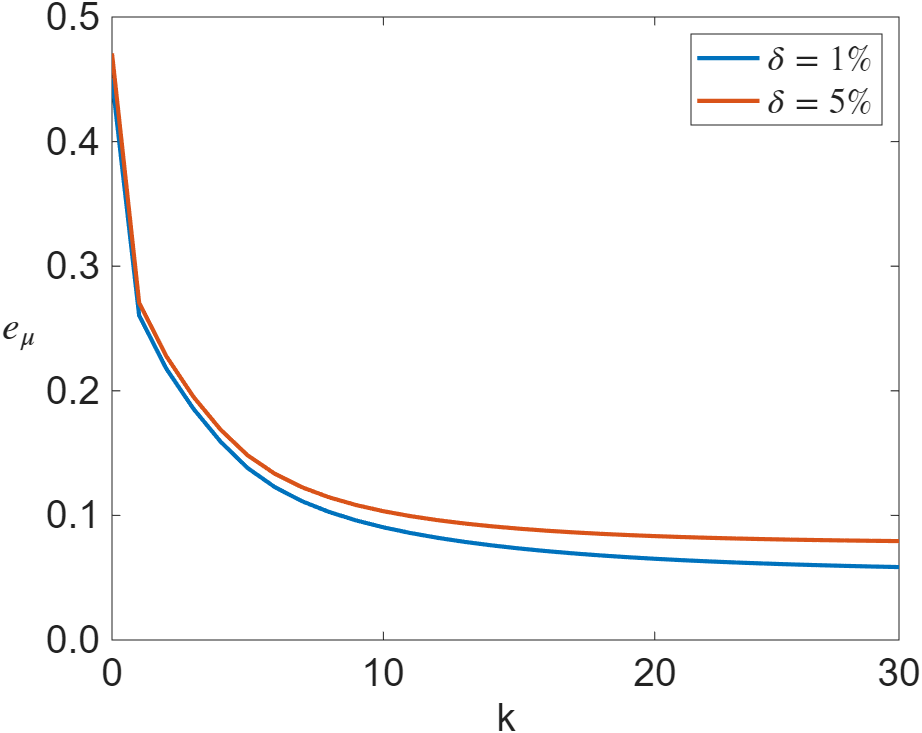}&
		\includegraphics[width=0.25\textwidth]{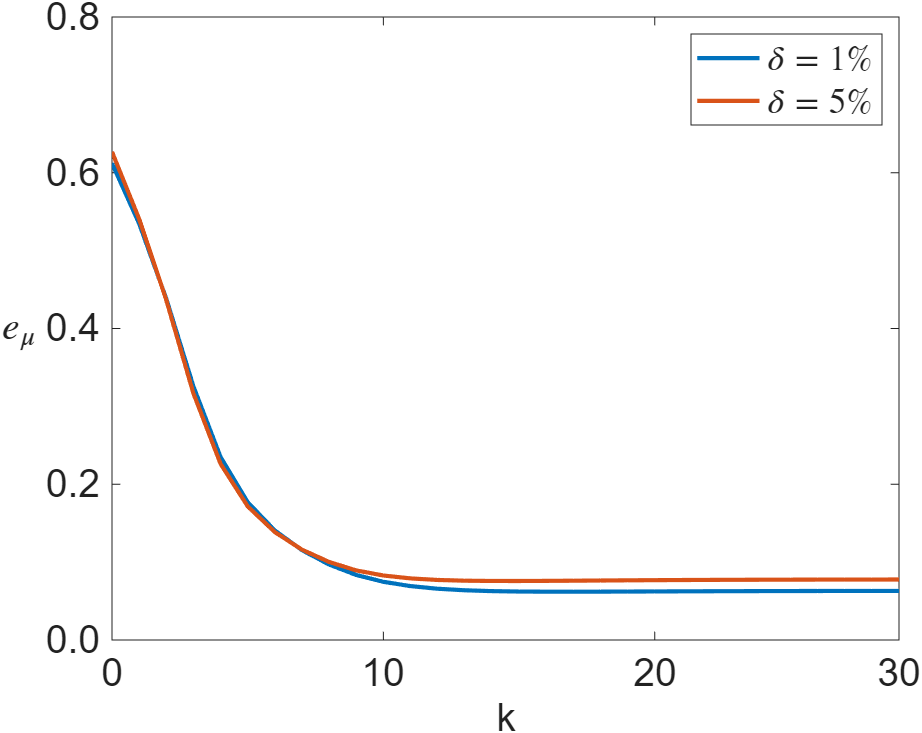} \\
		(a) Example~\ref{ex:curve}(i) & (b) Example~\ref{ex:curve}(ii)  & (c) Example~\ref{ex:curve}(iii) 
	\end{tabular}
	\caption{Evolution of residuals $r_\mu$ and relative errors $e_\mu$ with respect to iteration number $k$ in Example~\ref{ex:curve}.}
	\label{Fig:ex_curve_loss}
\end{figure}

\begin{figure}[htbp]
	\centering
	\begin{tabular}{ccc}
		\includegraphics[width=0.22\textwidth]{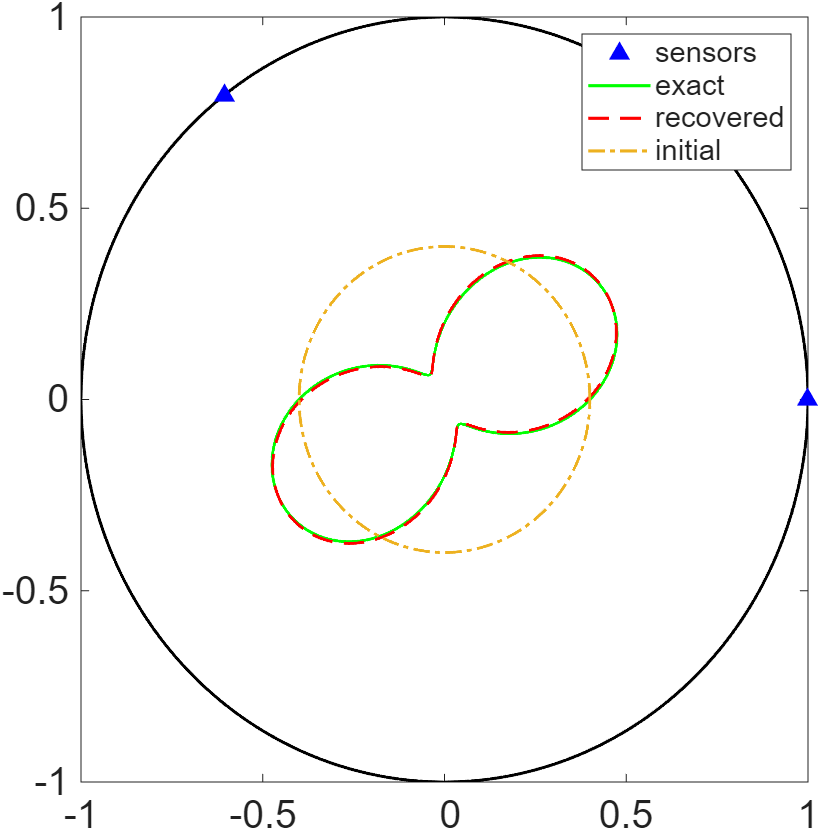}&
		\includegraphics[width=0.22\textwidth]{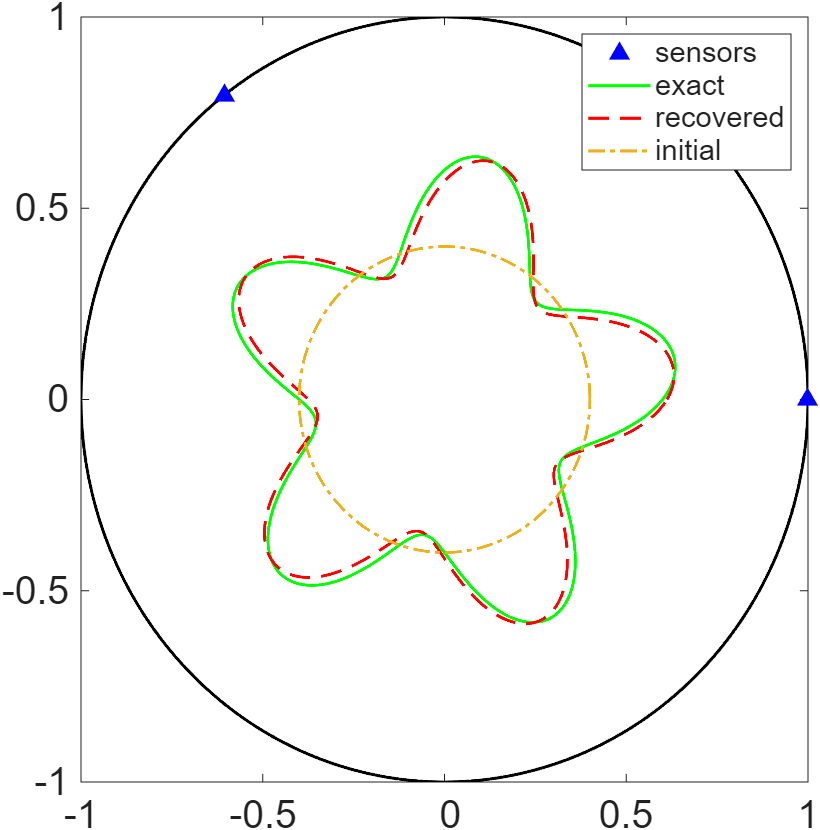}&
		\includegraphics[width=0.22\textwidth]{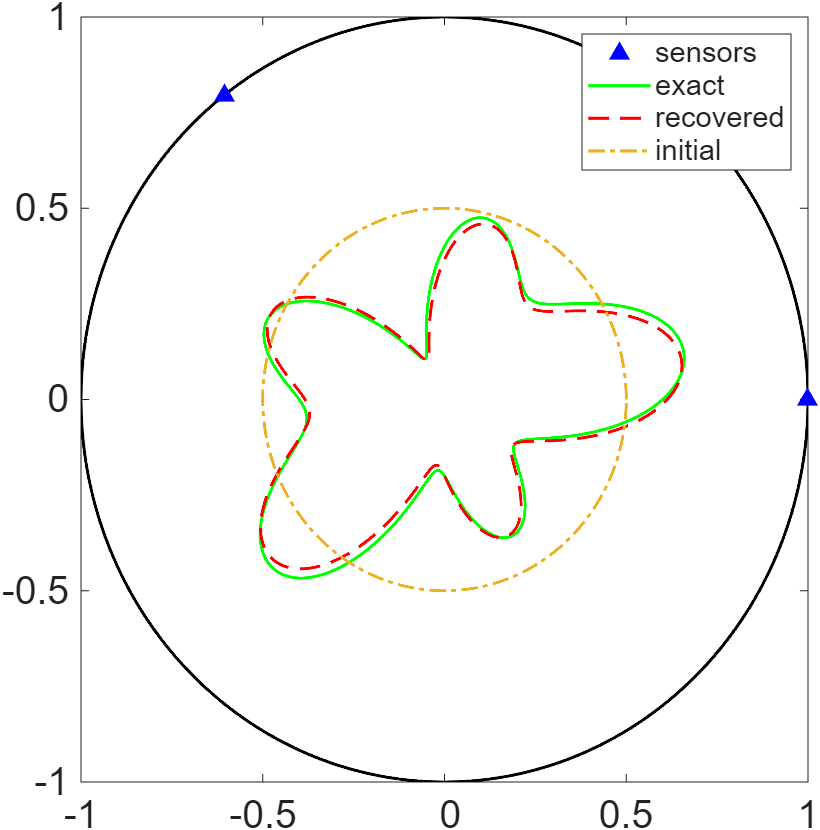} \\
		\includegraphics[width=0.22\textwidth]{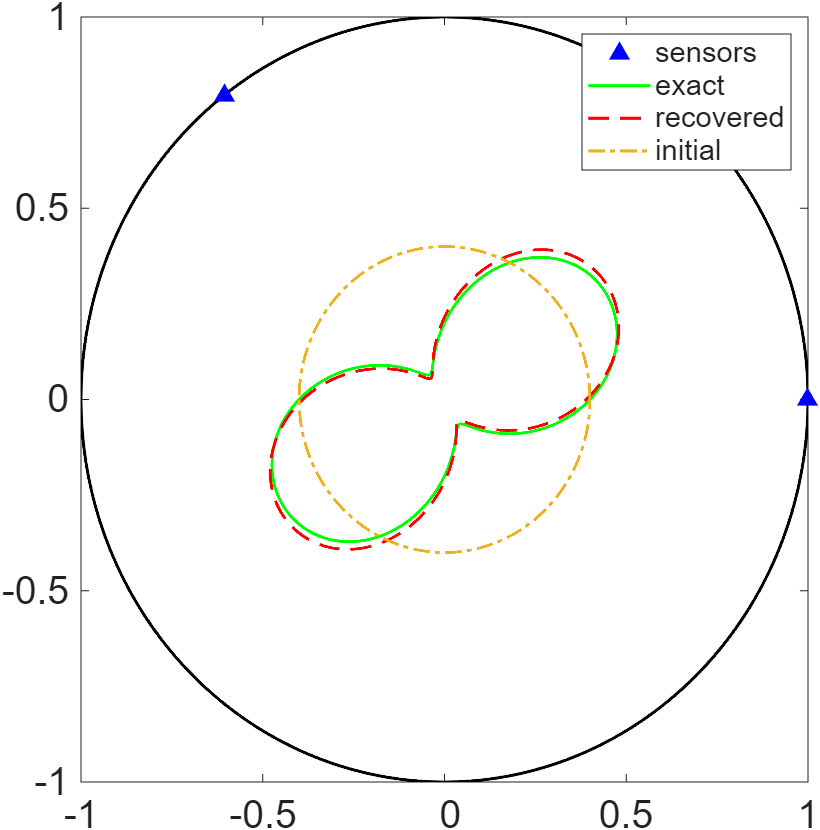}&
		\includegraphics[width=0.22\textwidth]{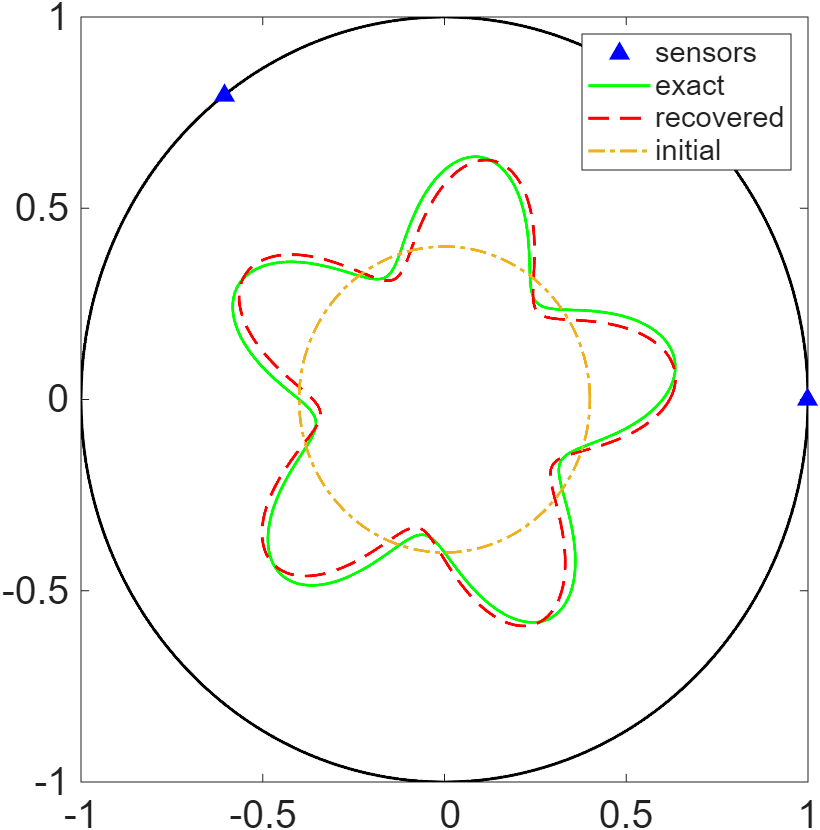}&
		\includegraphics[width=0.22\textwidth]{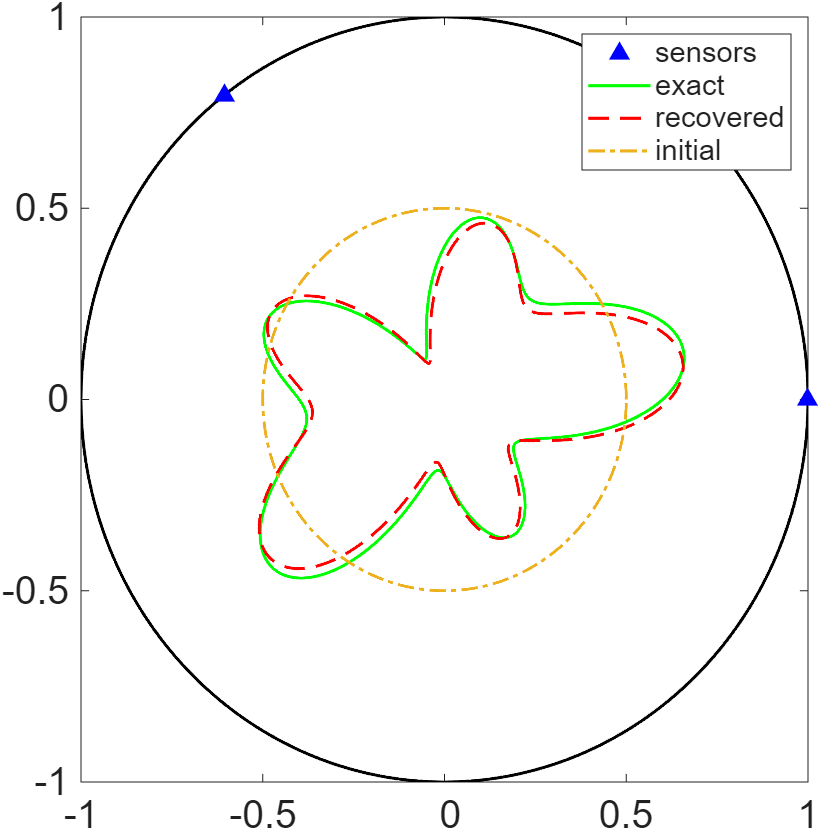} \\ 
		(a) Example~\ref{ex:curve}(i) & (b) Example~\ref{ex:curve}(ii)  & (c) Example~\ref{ex:curve}(iii) 
	\end{tabular}
	\caption{The reconstruction  for Example~\ref{ex:curve} at different noise levels. First row: $\delta=1\%$. Second row: $\delta=5\%$.}
	\label{Fig:ex_curve_recon}
\end{figure}

\subsection*{Acknowledgements}
The work of the three authors is supported by the French National Research Agency ANR and Hong Kong RGC Joint Research Scheme for the project IdiAnoDiff (grant ANR-24-CE40-7039).

\appendix
\section{Appendix}

\subsection{Properties of finite signed Radon measures}
Similarly to Section \ref{sectionproof}, for all $n\in\mathbb N$, we  consider an orthonormal basis $\{\phi_{n,k}\}_{k=1}^{d_n}$ of eigenspace of $A$ associated with the eigenvalues $\lambda_n$. We recall also that $\{\phi_{n,k}: n\in \mathbb N,\ k=1,\ldots,d_n\}$ is an orthonormal basis of $L^2(\Omega)$ of eigenvectors of the operator $A$. The main goal of this subsection, is to prove the following result about the unique determination of a  finite signed Radon measure from his spectral projections. 
\begin{lem}\label{nullmeasure}
	Set $\delta>0$ and let $\mu$ be a finite signed Radon measure with $\supp\mu\subset\subset\Omega$ satisfying :
	\begin{align}\label{null_mu_product}
	\scalarprod{\mu,\phi_{n,k}}= 0,\quad \forall n\in\N,\ k=1,\ldots,d_n.
	\end{align}
Then, $\mu$ is a null measure.
\end{lem}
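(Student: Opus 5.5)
The plan is to prove that $\int_\Omega \psi\,d\mu=0$ for every $\psi\in\mathcal C^\infty_0(\Omega)$. Since $\mathcal C^\infty_0(\Omega)$ is dense in $\mathcal C_0(\Omega)$ for the uniform norm and $|\mu|(\Omega)<\infty$, this gives $\int\psi\,d\mu=0$ for all $\psi\in\mathcal C_0(\Omega)$. Because $\supp\mu\subset\subset\Omega$, the functional $f\mapsto\int f\,d\mu$ on $\mathcal C(\overline\Omega)$ depends only on $f$ near $\supp\mu$. Multiplying by a cutoff $\chi\in\mathcal C^\infty_0(\Omega)$ with $\chi=1$ near $\supp\mu$ then shows that this functional vanishes on all of $\mathcal C(\overline\Omega)$. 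Uniqueness in the Riesz representation theorem \cite[Theorem 6.19]{rud} gives $\mu=0$.

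Fix $\psi\in\mathcal C^\infty_0(\Omega)$. Then $\psi\in D(A^{k})$ for every $k$, because all iterated Laplacians $\Delta^j\psi$ still have compact support and therefore vanish on $\partial\Omega$. In particular $\psi\in X^{2}$, and more generally $\psi\in X^{1+\delta}$ for any $\delta>0$. Expand
\[
\psi=\sum_{n\in\mathbb N}\sum_{k=1}^{d_n}\scalarprod{\psi,\phi_{n,k}}\phi_{n,k}.
\]
This series converges in $X^{1+\delta}$, since the partial sums are the spectral projections and $\|\psi-\psi_N\|_{1+\delta}^2=\sum_{n>N}\lambda_n^{1+\delta}\sum_k|\scalarprod{\psi,\phi_{n,k}}|^2\to0$.

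By \eqref{mureg}, $\mu\in X^{-1-\delta}$, so the pairing $\scalarprod{\mu,\cdot}$ is continuous on $X^{1+\delta}$. Hence
\[
\int_\Omega\psi\,d\mu=\scalarprod{\mu,\psi}=\lim_{N\to\infty}\sum_{n\le N}\sum_{k}\overline{\scalarprod{\psi,\phi_{n,k}}}\,\scalarprod{\mu,\phi_{n,k}},
\]
with the conjugation conventions chosen consistently. Every term vanishes by \eqref{null_mu_product}, so the limit is $0$.

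The main point needing care is that the pairing $\scalarprod{\mu,\cdot}$ in the sense of $X^{-1-\delta}$–$X^{1+\delta}$ duality really coincides with integration against $\mu$ on $X^{1+\delta}$. This holds because \eqref{mureg} was obtained precisely from the continuity of $X^{1+\delta}\hookrightarrow\mathcal C(\overline\Omega)$ composed with integration against $\mu$. Equivalently, one can use convergence of the expansion in $\mathcal C(\overline\Omega)$ directly, via that embedding, and pass to the limit using $|\mu|(\Omega)<\infty$.

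A second point is the complex conjugation: $\scalarprod{\mu,\phi_{n,k}}$ denotes $\int\phi_{n,k}\,d\mu$, or its conjugate, depending on convention. Since the family $\{\phi_{n,k}\}$ is closed under conjugation (for $n\ge2$, $\phi_{n,2}=\overline{\phi_{n,1}}$, and $\phi_{1,1}$ is real), the hypothesis gives $\int\phi_{n,k}\,d\mu=\int\overline{\phi_{n,k}}\,d\mu=0$ under either convention.
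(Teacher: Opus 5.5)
Your proof is correct and follows the paper's argument. You expand $\psi\in\mathcal C^\infty_0(\Omega)$ in the eigenbasis, use convergence in $X^{1+\delta}$ (hence in $\mathcal C(\overline\Omega)$) together with $\mu\in X^{-1-\delta}$ to get $\int\psi\,d\mu=0$, and then extend to all of $\mathcal C(\overline\Omega)$ using $\supp{\mu}\subset\subset\Omega$. The only cosmetic difference is in this last step: the paper uses Stone--Weierstrass density on a neighbourhood of $\supp{\mu}$, while you use density of $\mathcal C^\infty_0(\Omega)$ in $\mathcal C_0(\Omega)$ plus a cutoff; your conjugation concern is also settled directly by the fact that $\mu$ is real, so $\int\overline{\phi}\,d\mu=\overline{\int\phi\,d\mu}$.
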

\begin{proof}
    The proof is divided into two steps.\\
  \textit{Step 1 :}  We will show that $\scalarprod{\mu,\psi}_{\mathcal{D}'(\Omega),\,\mathcal{C}_0^{\infty}(\Omega)} = 0$ for every $\psi\in\mathcal{C}_0^{\infty}(\Omega)$. Let $\psi\in\mathcal{C}_0^{\infty}(\Omega)$ and notice that $\psi\in X^{1+\delta}$ with $\psi = \sum\limits_{n=1}^{\infty}\scalarprod{\psi,\phi_n}\phi_n$ which converges in $X^{2}$. Applying \eqref{mureg} and \eqref{null_mu_product}, we get
    \begin{align*}
        \scalarprod{\mu,\psi}_{\mathcal{D}'(\Omega),\,\mathcal{C}_0^{\infty}(\Omega)} = \scalarprod{\mu,\psi}_{-1,1}= \sum_{n= 1}^{+\infty}\sum_{k=1}^{d_n}\scalarprod{\psi,\phi_{n,k}}\underbrace{\scalarprod{\mu,\phi_{n,k}}_{-1,1}}_{=0}=0.
    \end{align*}
Thus, we have $\scalarprod{\mu,\psi}_{\mathcal{D}'(\Omega),\,\mathcal{C}_0^{\infty}(\Omega)} = 0$.\\

    \textit{Step 2 :}  We show that $\mu$ is a null measure. Let $\psi\in\mathcal{C}(\overline{\Omega})$, since $\supp\mu\subset\subset\Omega$ we fix $\mathcal{O}\subset\subset\Omega$ a neighbourhood of $\supp\mu$. Applying the Stone-Weierstrass theorem, one can check that $\{g_{|_{\overline{O}}}\; ;\; g\in\mathcal{C}_c^{\infty}(\Omega)\}$ is dense in $\mathcal{C}(\overline{O})$. Thus we can find a sequence $(\psi_m)_{m\in\N}$ of $\mathcal{C}_c^{\infty}(\overline{\Omega})$ such that $\sup\limits_{x\in\overline{\mathcal{O}}}\abs{\psi_m(x)-\psi(x)}\xrightarrow[n\rightarrow+\infty]{} 0$. Then, using the conclusion of Step 1, we find
      \begin{align*}
        \scalarprod{\mu,\psi}_{\mathcal{C}(\overline{\Omega})',\,\mathcal{C}(\overline{\Omega})} 
        = \int_\mathcal O \psi(x) d\mu(x)=\lim_{m\to+\infty }\int_\mathcal O \psi_m(x) d\mu(x)
        =\lim_{m\to+\infty }\scalarprod{\mu,\psi_m}_{\mathcal{D}'(\Omega),\,\mathcal{C}_c^{\infty}(\Omega)}=0.
    \end{align*}
This clearly implies that the finite signed Radon measure $\mu$ is a null measure.
    
\end{proof}

\subsection{Improvement for more regular source}
In this section, we consider problem \textbf{(ISP')} for source $\mu\in H^s(\Omega)$ with $s\in[0,1/2)$. We will see that, in this case the condition imposed to the time dependent source  $\sigma$ can be relaxed. The main result of this section can be stated as follows.

\begin{cor}\label{c1}
Set $\{x_\ell = e^{i\theta_\ell}\}_{\ell=1,2}\subset\partial\Omega$ satisfying  $\theta_1 - \theta_2\notin\pi\Q$ and, for $j=1,2$, let $\mu_j\in H^s(\Omega)$, $s\in[0,1/2)$, and $\sigma\in L^2(0,T)$ be a non-uniformly vanishing function and  constant on $(T_1,T)$, for some $T_1\in(0,T)$. Assume also that one of the following conditions is fulfilled: (i) $\mu_j\in L^2(\Omega)$, $\supp{\mu_j}\subset\subset\Omega$, $j=1,2$ and $\sigma\in H^1(0,T)$; (ii) $\mu_1,\mu_2\in H^s(\Omega)$, $s\in(0,1/2)$, and $\sigma\in L^2(0,T)$. Then condition \eqref{Hypothesis_normalderivative} implies that $\mu_1=\mu_2$.    
\end{cor}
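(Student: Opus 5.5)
Set $\mu=\mu_1-\mu_2$ and $u=u_1-u_2$. The plan is to reproduce the four steps of the proof of Theorem~\ref{inv_prob}, replacing the measure-valued regularity input by the better regularity of $H^s$ sources. The goal in both cases is to reach the fundamental identity \eqref{t1c}. Once \eqref{t1c} holds, Step~4 applies verbatim, since it uses only \eqref{eigen}, \eqref{t1aa} and $\theta_1-\theta_2\notin\pi\Q$, and gives $\scalarprod{\mu,\phi_{n,k}}=0$ for all $n,k$. Because $\{\phi_{n,k}\}$ is an orthonormal basis of $L^2(\Omega)$ and $\mu\in L^2(\Omega)$, this directly yields $\mu=0$; Lemma~\ref{nullmeasure} is not needed.

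\emph{Case (i).} Here $\mu\in L^2(\Omega)\subset X^{-1-\delta}$, so Theorem~\ref{Fourierseriesofu}, Proposition~\ref{bound} and Proposition~\ref{ana} apply once the decomposition \eqref{fo1} is available. The only use of $\sigma(0)=0$ was to kill the boundary term in the integration by parts. Without it, that term becomes $-\sigma(0)e^{-\lambda_n t}\scalarprod{\mu,\phi_n}/\lambda_n$, so $u$ acquires the extra piece $-\sigma(0)e^{tA}... $, more precisely $-\sigma(0)\sum_n e^{-\lambda_n t}\lambda_n^{-1}\scalarprod{\mu,\phi_n}\phi_n=-\sigma(0)e^{-tA}G$ with $G=A^{-1}\mu\in X^2$.

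This piece needs separate control. Since $\mu\in L^2$, $G\in H^2$, and by parabolic smoothing $e^{-tA}G\in L^2(0,T;X^{3})$. Hence its normal derivative lies in $L^2(0,T;\mathcal C(\partial\Omega))$, and it is holomorphic in $\Re z>0$ with values in $\mathcal C(\partial\Omega)$, by a Weierstrass argument like that in Proposition~\ref{ana}. Its Laplace transform contributes
\[
-\sigma(0)\sum_n\frac{\sum_k\scalarprod{\mu,\phi_{n,k}}\partial_\nu\phi_{n,k}(x_\ell)}{\lambda_n(p+\lambda_n)}.
\]
Combining with Step~2, the identity \eqref{t1b} becomes
\[
\hat\sigma(p)\,\partial_\nu G(x_\ell)-\bigl(p\hat\sigma(p)-\sigma(0)+\sigma(0)\bigr)\sum_n\frac{c_n^\ell}{\lambda_n(p+\lambda_n)}=0,
\qquad c_n^\ell:=\sum_k\scalarprod{\mu,\phi_{n,k}}\partial_\nu\phi_{n,k}(x_\ell),
\]
because $\widehat{\sigma'}(p)=p\hat\sigma(p)-\sigma(0)$. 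The $\sigma(0)$ terms therefore cancel and we recover exactly \eqref{t1b}. Steps~1 and~3 then go through unchanged, giving \eqref{t1c}. Equivalently, one can simply work with $u_n(t)=\scalarprod{\mu,\phi_n}\int_0^t e^{-\lambda_n(t-s)}\sigma(s)\,ds$ from Lemma~\ref{solve_distribution}; its Laplace transform is $\hat\sigma(p)\scalarprod{\mu,\phi_n}/(p+\lambda_n)$.

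\emph{Case (ii).} Now $\sigma\in L^2$ only, so I avoid integrating by parts altogether. I would work with $u_n(t)=\scalarprod{\mu,\phi_n}\int_0^t e^{-\lambda_n(t-s)}\sigma(s)\,ds$, whose validity for $\sigma\in L^2$ follows from the argument of Lemma~\ref{solve_distribution} with $u_n\in H^1(0,T)$. Since $\mu\in H^s=X^s$ for $s<1/2$, the maximal regularity estimate $\lambda_n\|u_n\|_{L^2}\lesssim|\scalarprod{\mu,\phi_n}|\,\|\sigma\|_{L^2}$ gives $u\in L^2(0,T;X^{2+s})$.

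The main obstacle is the boundary flux. From $X^{2+s}\subset H^{2+s}$ the trace $\partial_\nu u\in L^2(0,T;H^{1/2+s}(\partial\Omega))$ does not embed into continuous functions on the one-dimensional boundary when $s\le1/2$ is not exceeded strictly enough. So I would first use the localisation of Proposition~\ref{p1}, applied to the truncated source after replacing $\mu$ near the boundary, or directly use an interior-away argument to get pointwise meaning. Alternatively, I would observe that $\partial_\nu\phi_{n,k}(x_\ell)$ has explicit size $\sqrt{\lambda_n/\pi}$ by \eqref{t1aa} and define the point flux by the series, checking that it converges in $L^2_{\rm loc}$-Laplace sense for $\Re p>0$ using $\sum|\scalarprod{\mu,\phi_n}|^2\lambda_n^{s}<\infty$ and Weyl asymptotics $\lambda_n\sim n$.

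For $t>T_1$, analyticity follows as in Proposition~\ref{ana}, because the factor $e^{-\lambda_n(t-T_1)}$ gives exponential decay; the constant tail contributes $\sigma(T)(1-e^{-\lambda_n(t-T_1)})/\lambda_n$, which is handled through $\sigma(T)\,\partial_\nu G$. Step~1 then extends the data to $(0,\infty)$. The Laplace transform of $\partial_\nu u(\cdot,x_\ell)$ equals $\hat\sigma(p)\sum_n c_n^\ell/(p+\lambda_n)$. Since $\hat\sigma\not\equiv0$, the meromorphic function $\sum_n c_n^\ell/(p+\lambda_n)$ vanishes on an interval and hence everywhere. Taking residues at $-\lambda_n$ gives \eqref{t1c}, and Step~4 concludes.
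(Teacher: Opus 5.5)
Your overall architecture is the paper's: reduce to \eqref{t1c} by analytic continuation in time, the Laplace transform and residues, then apply Step~4 and completeness of $\{\phi_{n,k}\}$ in $L^2(\Omega)$. Your case (i) is correct, including the explicit cancellation of the $\sigma(0)$ terms via $\widehat{\sigma'}(p)=p\hat\sigma(p)-\sigma(0)$.

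The gap is in case (ii), in your paragraph on the boundary flux, which rests on a false claim. $\partial\Omega$ is one-dimensional, so $H^{r}(\partial\Omega)\hookrightarrow\mathcal C(\partial\Omega)$ for every $r>1/2$, and $r=1/2+s>1/2$ precisely because $s>0$. This embedding is how the paper gives meaning to \eqref{Hypothesis_normalderivative} in case (ii). From $\lambda_n\|u_n\|_{L^2(0,T)}\leq\|\sigma\|_{L^2(0,T)}|\langle\mu,\phi_n\rangle|$ one gets $u\in L^2(0,T;X^{2+s})$ with $X^{2+s}\subset H^{2+s}(\Omega)$. The argument of Proposition~\ref{bound} then shows that $\sum_n u_n(t)\partial_\nu\phi_n$ converges to $\partial_\nu u$ in $L^2(0,T;H^{1/2+s}(\partial\Omega))\subset L^2(0,T;\mathcal C(\partial\Omega))$. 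This is also where the restriction $s>0$ enters: for $s=0$ the trace lies only in $H^{1/2}(\partial\Omega)$, which is why case (i) relies instead on $\supp{\mu_j}\subset\subset\Omega$ and Proposition~\ref{p1}.

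The workarounds you offer in place of the embedding do not work as stated. Localisation as in Proposition~\ref{p1} needs a source-free annulus $C_r$. In case (ii) the $\mu_j$ are not assumed compactly supported, and ``replacing $\mu$ near the boundary'' changes $u$ and therefore the data.

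``Defining'' the point flux by the series is not legitimate on its own either, since the hypothesis concerns the trace of the actual solutions; you must prove that this trace is the series. This can be done without the embedding by combining two facts:
\begin{itemize}
\item the convergence of $\sum_n u_n\phi_n$ to $u$ in $L^2(0,T;X^{2+s})$, hence of the normal traces in $L^2(0,T;L^2(\partial\Omega))$;
\item the absolute convergence of $\sum_n u_n\partial_\nu\phi_n$ in $L^2(0,T;\mathcal C(\partial\Omega))$.
\end{itemize}
The second fact follows, with the eigenfunctions chosen as in \eqref{eigen}, from $|\partial_\nu\phi_n|=\sqrt{\lambda_n/\pi}$ on $\partial\Omega$ by \eqref{t1aa}, from $\|u_n\|_{L^2(0,T)}\leq\|\sigma\|_{L^2(0,T)}|\langle\mu,\phi_n\rangle|/\lambda_n$, and from $\sum_n\lambda_n^{-1/2}|\langle\mu,\phi_n\rangle|\leq\bigl(\sum_n\lambda_n^{s}|\langle\mu,\phi_n\rangle|^2\bigr)^{1/2}\bigl(\sum_n\lambda_n^{-1-s}\bigr)^{1/2}<\infty$ (Weyl's law $\lambda_n\asymp n$, together with $s>0$). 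Your proposal does not carry out this identification.

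The same issue affects $\partial_\nu G(x_\ell)$, with $G=A^{-1}\mu\in X^{2+s}$, in your treatment of the constant tail. Once this paragraph is repaired, the rest of your case (ii) is sound and agrees with the paper. That covers analyticity on $(T_1,+\infty)$, the Laplace transform $\hat\sigma(p)\sum_n c_n^\ell/(p+\lambda_n)$ (with the interchange of sum and integral justified by the same summability bound), and the residue argument.
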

\begin{proof}
We assume that \eqref{Hypothesis_normalderivative} is fulfilled and, under assumption (i) or (ii), we will prove that $\mu_1=\mu_2$. 
 We consider first that condition (i) is fulfilled. Since $\mu=\mu_1-\mu_2\in L^2(\Omega)$, fixing $u=u_1-u_2$ and repeating
the argumentation of Proposition \ref{bound}, we can derive a spectral representation for the term $\partial_{\nu}u$ that takes the following form
  $$
        \partial_{\nu}u(t,z) = \sigma(t)\partial_{\nu}G(z) - \sum_{n=1}^{+\infty}\left(\sigma(0)\frac{\scalarprod{\mu,\phi_n}}{\lambda_n}e^{-t\lambda_n}+\int_0^te^{-\lambda_n(t-s)}\sigma'(s)\frac{\scalarprod{\mu,\phi_n}}{\lambda_n}ds\right)\partial_{\nu}\phi_n(z),\quad (t,z)\in \Sigma.$$
Combining this formula with the fact that, for all $z\in\partial\Omega$, the map $t\mapsto \sum_{n=1}^{+\infty}\sigma(0)\frac{\scalarprod{\mu,\phi_n}}{\lambda_n}e^{-t\lambda_n}\partial_{\nu}\phi_n(z)$ is analytic on $(0,+\infty)$ and repeating the arguments of Theorem \ref{inv_prob}, one can prove that $\mu_1=\mu_2$.

Now let us consider the situation where condition (ii) is fulfilled.
In light of \cite[Chapter~1,~Theorem~11.4]{lionsmagenesvol1} and \cite[Chapter~1,~Theorem~11.6]{lionsmagenesvol1}, we have $\mu=\mu_1-\mu_2\in H^s(\Omega)=X^s$. Then, following the argumentation of   Proposition \ref{bound}, we can
prove that $u\in L^2(0,T;X^{2+s})$ and we can derive a spectral representation for the term $\partial_{\nu}u$ that takes the following form
$$
       \begin{aligned} \partial_{\nu}u(t,z) &=  \sum_{n=1}^{+\infty}\left(\int_0^te^{-\lambda_n(t-s)}\sigma(s)\scalarprod{\mu,\phi_n}ds\right)\partial_{\nu}\phi_n(z)\\
       &=\sum_{n=1}^{+\infty}\left(\sigma(T)\frac{\scalarprod{\mu,\phi_n}}{\lambda_n}(1-e^{-t\lambda_n})+\int_0^te^{-\lambda_n(t-s)}(\sigma(s)-\sigma(T))\scalarprod{\mu,\phi_n}ds\right)\partial_{\nu}\phi_n(z),\quad (t,z)\in \Sigma.\end{aligned}$$
Combining this formula with the fact that $\sigma-\sigma(T)=0$ on $(T_1,T)$ and applying the arguments used in the proof of Theorem \ref{inv_prob}, one can check that $\mu_1=\mu_2$.

\end{proof}

\bibliographystyle{plain}
\bibliography{biblio_diffusion}

@book{watson1922treatise,
  title={A treatise on the theory of Bessel functions},
  author={Watson, George Neville},
  volume={3},
  year={1922},
  publisher={The University Press}
}

@book{lionsmagenesvol1,
 author = {Lions, J. L. and Magenes, E.},
 title = {Non-homogeneous boundary value problems and applications. {Vol}. {I}. {Translated} from the {French} by {P}. {Kenneth}},
 fseries = {Grundlehren der Mathematischen Wissenschaften},
 series = {Grundlehren Math. Wiss.},
 issn = {0072-7830},
 volume = {181},
 year = {1972},
 publisher = {Springer, Cham},
 language = {English},
 zbMATH = {3353865},
 Zbl = {0223.35039}
}

@book{rud,
 author = {Rudin, Walter},
 title = {Real and complex analysis.},
 edition = {3rd ed.},
 isbn = {0-07-054234-1},
 year = {1987},
 publisher = {New York, NY: McGraw-Hill},
 language = {English},
 zbMATH = {1022658},
 Zbl = {0925.00005}
}

@book{lionsmagenesvol2,
 author = {Lions, J. L. and Magenes, E.},
 title = {Non-homogeneous boundary value problems and applications. {Vol}. {II}. {Translated} from the {French} by {P}. {Kenneth}},
 fseries = {Grundlehren der Mathematischen Wissenschaften},
 series = {Grundlehren Math. Wiss.},
 issn = {0072-7830},
 volume = {182},
 year = {1972},
 publisher = {Springer, Cham},
 language = {English},
 zbMATH = {3360568},
 Zbl = {0227.35001}
}

@article{JET,
 author = {Al Jebawy, Hanin and Elbadia, Abdellatif and Triki, Faouzi},
 title = {Inverse moving point source problem for the wave equation},
 fjournal = {Inverse Problems},
 journal = {Inverse Problems},
 issn = {0266-5611},
 volume = {38},
 number = {12},
 pages = {125003, 11 pp.}, 
 year = {2022},
 language = {English},
 doi = {10.1088/1361-6420/ac9999},
 zbMATH = {7611869},
 Zbl = {1501.35452}
}

@article{WKT,
 author = {Wang, S{\'a}ra and Karamehmedovi{\'c}, Mirza and Triki, Faouzi},
 title = {Localization of moving sources: uniqueness, stability, and {Bayesian} inference},
 fjournal = {SIAM Journal on Applied Mathematics},
 journal = {SIAM J. Appl. Math.},
 issn = {0036-1399},
 volume = {83},
 number = {3},
 pages = {1049--1073},
 year = {2023},
 language = {English},
 doi = {10.1137/22M1500691},
 zbMATH = {7699954},
 Zbl = {1518.35691}
}

@article{MGH,
 author = {Ma, Guanqiu and Guo, Hongxia and Hu, Guanghui},
 title = {Imaging a moving point source from multifrequency data measured at one and sparse observation points. {II}: {Near}-field case in {3D}},
 fjournal = {SIAM Journal on Imaging Sciences},
 journal = {SIAM J. Imaging Sci.},
 issn = {1936-4954},
 volume = {17},
 number = {3},
 pages = {1377--1414},
 year = {2024},
 language = {English},
 doi = {10.1137/23M162260X},
 zbMATH = {7896977},
 Zbl = {1547.35202}
}

@article{gong2026identification,
  TITLE = {{Identification of a Point Source in the Heat Equation from Sparse Boundary Measurements }},
  AUTHOR = {Gong, Fangyu and Jin, Bangti and Kian, Yavar and Liu, Sizhe},
  URL = {https://hal.science/hal-05545664},
   JOURNAL = {SIAM J. Math. Anal.},
  FJOURNAL = {SIAM Journal on Mathematical Analysis},
  PUBLISHER = {{Society for Industrial and Applied Mathematics}},
  YEAR = {2026 in press},
  HAL_ID = {hal-05545664},
  HAL_VERSION = {v1},
}

@book {Thomee:2006,
    AUTHOR = {Thom\'{e}e, Vidar},
     TITLE = {Galerkin {F}inite {E}lement {M}ethods for {P}arabolic {P}roblems},
   EDITION = {Second},
 PUBLISHER = {Springer-Verlag, Berlin},
      YEAR = {2006},
     PAGES = {xii+370},
      ISBN = {978-3-540-33121-6; 3-540-33121-2},
   MRCLASS = {65-02 (65M15 65M60)},
  MRNUMBER = {2249024},
}

@book{Is1,
 author = {Isakov, Victor},
 title = {Inverse source problems},
 fseries = {Mathematical Surveys and Monographs},
 series = {Math. Surv. Monogr.},
 issn = {0076-5376},
 volume = {34},
 isbn = {0-8218-1532-6},
 year = {1990},
 publisher = {Providence, RI: American Mathematical Society},
 language = {English},
 zbMATH = {194472},
 Zbl = {0721.31002}
}

@book{Nocedal:2006,
 author = {Nocedal, Jorge and Wright, Stephen J.},
 title = {Numerical optimization},
 edition = {2nd ed.},
 fseries = {Springer Series in Operations Research and Financial Engineering},
 series = {Springer Ser. Oper. Res. Financ. Eng.},
 issn = {1431-8598},
 isbn = {0-387-30303-0},
 year = {2006},
 publisher = {New York, NY: Springer},
 language = {English},
 zbMATH = {5060482},
 Zbl = {1104.65059}
}

@article {Marquardt:1963,
    AUTHOR = {Marquardt, Donald W.},
     TITLE = {An algorithm for least-squares estimation of nonlinear
              parameters},
   JOURNAL = {J. Soc. Indust. Appl. Math.},
  FJOURNAL = {Journal of the Society for Industrial and Applied Mathematics},
    VOLUME = {11},
      YEAR = {1963},
     PAGES = {431--441},
      ISSN = {0368-4245},
   MRCLASS = {62.20},
  MRNUMBER = {153071},
MRREVIEWER = {M.\ Atiqullah},
}

@article {Levenberg:1944,
    AUTHOR = {Levenberg, Kenneth},
     TITLE = {A method for the solution of certain non-linear problems in
              least squares},
   JOURNAL = {Quart. Appl. Math.},
  FJOURNAL = {Quarterly of Applied Mathematics},
    VOLUME = {2},
      YEAR = {1944},
     PAGES = {164--168},
      ISSN = {0033-569X,1552-4485},
   MRCLASS = {65.0X},
  MRNUMBER = {10666},
MRREVIEWER = {T.\ E.\ Sterne},
       DOI = {10.1090/qam/10666},
       URL = {https://doi.org/10.1090/qam/10666},
}

@article{HR,
 author = {Hettlich, F. and Rundell, W.},
 title = {Identification of a discontinuous source in the heat equation},
 fjournal = {Inverse Problems},
 journal = {Inverse Problems},
 issn = {0266-5611},
 volume = {17},
 number = {5},
 pages = {1465--1482},
 year = {2001},
 language = {English},
 doi = {10.1088/0266-5611/17/5/315},
 zbMATH = {1691018},
 Zbl = {0986.35129}
}

@article{GZZ,
 author = {Gu, Qiling and Zhang, Wenlong and Zhang, Zhidong},
 title = {Determine the point source of the heat equation with sparse boundary measurements},
 fjournal = {SIAM Journal on Applied Mathematics},
 journal = {SIAM J. Appl. Math.},
 issn = {0036-1399},
 volume = {85},
 number = {5},
 pages = {2337--2354},
 year = {2025},
 language = {English},
 doi = {10.1137/25M1725620},
 zbMATH = {8113966},
 Zbl = {1577.35435}
}

@article{Run2020Heatequation_source_problem,
author = {Rundell, William and Zhang, Zhidong},
title = {On the Identification of Source Term in the Heat Equation from Sparse Data},
journal = {SIAM J.  Math. Anal.},
volume = {52},
number = {2},
pages = {1526-1548},
year = {2020},
doi = {10.1137/19M1279915},

URL = { 
    
        https://doi.org/10.1137/19M1279915
},
eprint = { 
    
        https://doi.org/10.1137/19M1279915
},
}

@article {ElBadiaHaDuong2002pollution_detection_problem,
    AUTHOR = {El Badia, A. and Ha-Duong, T.},
     TITLE = {On an inverse source problem for the heat equation.
              {A}pplication to a pollution detection problem},
   JOURNAL = {J. Inverse Ill-Posed Probl.},
  FJOURNAL = {Journal of Inverse and Ill-Posed Problems},
    VOLUME = {10},
      YEAR = {2002},
    NUMBER = {6},
     PAGES = {585--599},
      ISSN = {0928-0219,1569-3945},
   MRCLASS = {35R30 (35K05 35K20 80A23)},
  MRNUMBER = {1967440},
MRREVIEWER = {Peter\ G.\ Danilaev},
       DOI = {10.1515/jiip.2002.10.6.585},
       URL = {https://doi-org.ezproxy.lb.polyu.edu.hk/10.1515/jiip.2002.10.6.585},
}

@article {Andrle2012moving_pollution_sources,
    AUTHOR = {Andrle, M. and El Badia, A.},
     TITLE = {Identification of multiple moving pollution sources in surface
              waters or atmospheric media with boundary observations},
   JOURNAL = {Inverse Problems},
  FJOURNAL = {Inverse Problems. An International Journal on the Theory and
              Practice of Inverse Problems, Inverse Methods and Computerized
              Inversion of Data},
    VOLUME = {28},
      YEAR = {2012},
    NUMBER = {7},
     PAGES = {075009, 22 pp.},
      ISSN = {0266-5611,1361-6420},
   MRCLASS = {86A10 (35K20 35R30 86A05)},
  MRNUMBER = {2946797},
MRREVIEWER = {Sven\ Ivansson},
       DOI = {10.1088/0266-5611/28/7/075009},
       URL = {https://doi-org.ezproxy.lb.polyu.edu.hk/10.1088/0266-5611/28/7/075009},
}

@article {Li2020fractionalorder_sourceterm,
    AUTHOR = {Li, Zhiyuan and Zhang, Zhidong},
     TITLE = {Unique determination of fractional order and source term in a
              fractional diffusion equation from sparse boundary data},
   JOURNAL = {Inverse Problems},
  FJOURNAL = {Inverse Problems. An International Journal on the Theory and
              Practice of Inverse Problems, Inverse Methods and Computerized
              Inversion of Data},
    VOLUME = {36},
      YEAR = {2020},
    NUMBER = {11},
     PAGES = {115013, 20 pp.},
      ISSN = {0266-5611,1361-6420},
   MRCLASS = {65M32 (35R11 47N40)},
  MRNUMBER = {4173588},
MRREVIEWER = {Francesco\ Zirilli},
       DOI = {10.1088/1361-6420/abbc5d},
       URL = {https://doi-org.ezproxy.lb.polyu.edu.hk/10.1088/1361-6420/abbc5d},
}

@misc{HJKT,
 author = {Kuang Huang and Bangti Jin and Yavar Kian and Faouzi Triki},
 title = {Stability Estimates for the Inverse Problem of Reconstructing Point sources in Parabolic Equations},
 year = {2026},
 howpublished = {Preprint, arXiv:2603.09099},
 url = {https://arxiv.org/abs/2603.09099},
 arXiv = {arXiv:2603.09099}
}

@article{SuWa,
 author = {Sun, Shiwei and Wang, Haibing},
 title = {A novel approach for locating point sources in a diffusive medium},
 fjournal = {SIAM Journal on Applied Mathematics},
 journal = {SIAM J. Appl. Math.},
 issn = {0036-1399},
 volume = {85},
 number = {4},
 pages = {1667--1689},
 year = {2025},
 language = {English},
 doi = {10.1137/24M1692678},
 zbMATH = {8083411},
 Zbl = {1571.35464}
}

@article{Briant2011improved_line_source_air_pollution,
title = {An improved line source model for air pollutant dispersion from roadway traffic},
journal = {Atmospheric Environment},
volume = {45},
number = {24},
pages = {4099-4107},
year = {2011},
issn = {1352-2310},
doi = {https://doi.org/10.1016/j.atmosenv.2010.11.016},
url = {https://www.sciencedirect.com/science/article/pii/S1352231010009714},
author = {Régis Briant and Irène Korsakissok and Christian Seigneur},
}

@Article{Nanni2022puff_lagrangianmodel_coastal_site,
AUTHOR = {Nanni, Alessandro and Tinarelli, Gianni and Solisio, Carlo and Pozzi, Cristina},
TITLE = {Comparison between Puff and Lagrangian Particle Dispersion Models at a Complex and Coastal Site},
JOURNAL = {Atmosphere},
VOLUME = {13},
YEAR = {2022},
NUMBER = {4},
ARTICLE-NUMBER = {508},
URL = {https://www.mdpi.com/2073-4433/13/4/508},
ISSN = {2073-4433},
DOI = {10.3390/atmos13040508},
}

@article{Chatterjee2020groundwater_cities_finite_sources,
  title={Groundwater contamination in mega cities with finite sources},
  author={Chatterjee, Ayan and Singh, Mritunjay Kumar and Singh, Vijay P},
  journal={Journal of Earth System Science},
  volume={129},
  number={1},
  pages={1},
  year={2020},
  publisher={Springer}
}

@article{Zheng2024line_contaminant_sources_groundwater,
title = {Estimating line contaminant sources in non-Gaussian groundwater conductivity fields using deep learning-based framework},
journal = {Journal of Hydrology},
volume = {630},
pages = {130727},
year = {2024},
issn = {0022-1694},
doi = {https://doi.org/10.1016/j.jhydrol.2024.130727},
url = {https://www.sciencedirect.com/science/article/pii/S0022169424001215},
author = {Na Zheng and Zhi Li and Xuemin Xia and Simin Gu and Xianwen Li and Simin Jiang},
}

@article{Grebenkov2013laplacian_eigenfunctions,
author = {Grebenkov, D. S. and Nguyen, B.-T.},
title = {Geometrical Structure of Laplacian Eigenfunctions},
journal = {SIAM Review},
volume = {55},
number = {4},
pages = {601-667},
year = {2013},
doi = {10.1137/120880173},

URL = { 
    
        https://doi.org/10.1137/120880173
},
eprint = { 
    
        https://doi.org/10.1137/120880173
},
}

@article{IY,
 author = {Imanuvilov, Oleg Yu and Yamamoto, Masahiro},
 title = {Lipschitz stability in inverse parabolic problems by the {Carleman} estimate},
 fjournal = {Inverse Problems},
 journal = {Inverse Problems},
 issn = {0266-5611},
 volume = {14},
 number = {5},
 pages = {1229--1245},
 year = {1998},
 language = {English},
 doi = {10.1088/0266-5611/14/5/009},
 zbMATH = {1223316},
 Zbl = {0992.35110}
}

@misc{CJKZ,
 author = {Siyu Cen and Bangti Jin and Yavar Kian and Zhi Zhou},
 title = {Numerical Analysis of Space-Time Dependent Source Identification in Subdiffusion Equations},
 year = {2026},
 howpublished = {Preprint, arXiv:2605.05579 },
 url = {https://arxiv.org/abs/2605.05579},
 arXiv = {arXiv:2605.05579}
}

@article{JKZ1,
 author = {Jin, Bangti and Kian, Yavar and Zhou, Zhi},
 title = {Reconstruction of a space-time-dependent source in subdiffusion models via a perturbation approach},
 fjournal = {SIAM Journal on Mathematical Analysis},
 journal = {SIAM J. Math. Anal.},
 issn = {0036-1410},
 volume = {53},
 number = {4},
 pages = {4445--4473},
 year = {2021},
 language = {English},
 doi = {10.1137/21M1397295},
 url = {hdl.handle.net/10397/93852},
 zbMATH = {7381135},
 Zbl = {1470.35433}
}

@article{JKZ,
 author = {Jin, Bangti and Kian, Yavar and Zhou, Zhi},
 title = {Inverse problems for subdiffusion from observation at an unknown terminal time},
 fjournal = {SIAM Journal on Applied Mathematics},
 journal = {SIAM J. Appl. Math.},
 issn = {0036-1399},
 volume = {83},
 number = {4},
 pages = {1496--1517},
 year = {2023},
 language = {English},
 doi = {10.1137/22M1529105},
 url = {discovery.ucl.ac.uk/id/eprint/10175946/},
 zbMATH = {7723859},
 Zbl = {1525.35247}
}

@article{KLY,
 author = {Kian, Yavar and Liu, Yikan and Yamamoto, Masahiro},
 title = {Uniqueness of inverse source problems for general evolution equations},
 fjournal = {Communications in Contemporary Mathematics},
 journal = {Commun. Contemp. Math.},
 issn = {0219-1997},
 volume = {25},
 number = {6},
 pages = {2250009, 33 pp.}, 
 year = {2023},
 language = {English},
 doi = {10.1142/S0219199722500092},
 zbMATH = {7696881},
 Zbl = {1517.35244}
}

@article{KSXY,
 author = {Kian, Yavar and Soccorsi, {\'E}ric and Xue, Qi and Yamamoto, Masahiro},
 title = {Identification of time-varying source term in time-fractional diffusion equations},
 fjournal = {Communications in Mathematical Sciences},
 journal = {Commun. Math. Sci.},
 issn = {1539-6746},
 volume = {20},
 number = {1},
 pages = {53--84},
 year = {2022},
 language = {English},
 doi = {10.4310/CMS.2022.v20.n1.a2},
 zbMATH = {7474593},
 Zbl = {1483.35342}
}

@article{Choulli2006stability_estimates,
 author = {Choulli, M. and Yamamoto, M.},
 title = {Some stability estimates in determining sources and coefficients},
 fjournal = {Journal of Inverse and Ill-Posed Problems},
 journal = {J. Inverse Ill-Posed Probl.},
 issn = {0928-0219},
 volume = {14},
 number = {4},
 pages = {355--373},
 year = {2006},
 language = {English},
 doi = {10.1515/156939406777570996},
 zbMATH = {5122057},
 Zbl = {1110.35100}}
\nocite{*} 

%

\end{document}